\newtheorem{theorem}{Theorem}[section]
\newtheorem{Remark}{Remark}[section]
\theoremstyle{definition}
\newtheorem{dfn}{Definition}[section]
\theoremstyle{remark}
\theoremstyle{lemma}
\newtheorem{lemma}[theorem]{Lemma}
\newtheorem{cor}[theorem]{Corollary}
\newtheorem{proposition}[theorem]{Proposition}
\numberwithin{equation}{section}
\def\O{\Omega}
\def\o{{\omega}}
\def\bA{{\bf A}}
\def\bB{{\bf  B}}
\def\F{{\mathcal F}}
\def\l{\lambda}
\def\p{\partial}
\def\t{\times}
\def\e{\epsilon}
\def\f{{\mathbf f}}
\def\bg{\mathbf g}
\def\bu{\mathbf u}
\def\ue{{\mathbf u}^{\epsilon}}
\def\ve{{\mathbf v}^{\epsilon}}
\def\we{{\mathbf w}^{\epsilon}}
\def\ye{{\mathbf y}^{\epsilon}}
\def\ze{{\mathbf z}^{\epsilon}}
\def\bv{\mathbf v}
\def\bw{\mathbf w}
\def\bz{\mathbf z}
\def\by{\mathbf y}
\def\bE{\mathbb{E}}
\def\n{{\mathbf n}}
\def\a{\alpha}
\def\d{\delta}
\def\k{\kappa}
\def\s{\sigma}
\def\th{\theta}
\def\E{\mathbb{E}}
\def\R{{\mathbb{R}}}
\def\N{{\mathbb N}}
\def\Rn{\R^n}
\def\ms{\medskip}
\def\mni{\medskip\noindent}
\def\ss{\smallskip}
\def\sni{\smallskip\noindent}
\def\ni{\noindent}
\def\pr{^\prime}
\def\dsum{\displaystyle \sum}
\def\dfrac{\displaystyle \frac}
\def\dpr{^{\prime\prime}}
\def\mybaselineskipA{\baselineskip 0.25in}
\def\mybaselineskipB{\baselineskip=10pt}
\providecommand{\norm}[1]{\lVert#1\rVert}
\begin{document}
\title{ Identification and existence of Boltzmann  processes}

\author{ B. R\"udiger\footnote{Bergische Universit\"at Wuppertal Fakult\"at 4 -Fachgruppe Mathematik-Informatik,
Gauss str. 20, 42097 Wuppertal, Germany. Email: ruediger@uni-wuppertal.de Orcid:0000-0002-4363-2812; corresponding author}
 and P. Sundar \footnote{Department of Mathematics, Louisiana State University, Baton Rouge, La 70803, USA. Email: psundar@lsu.edu Orcid:0009-0000-0410-9079}}
\maketitle
\begin{abstract}
 \noindent 
 The stochastic differential equation of McKean-Vlasov type is identified such that  the Fokker-Planck equation associated to it is the Boltzmann equation. Hence, we call its solutions as Boltzmann processes. They describe the dynamics (in position and velocity) of particles expanding in vacuum in accordance with  the Boltzmann equation. Given a solution $f:=$  $\{f(t,x,v\}_{0 \le t \le T} $ of the Boltzmann equation, the existence of solutions to the McKean-Vlasov SDE is established for the non-cutoff hard sphere case. 
  \\

\end{abstract}
{\bf AMS Subject Classification:} {35Q20; 60H20; 60H30}\\
{\bf Keywords:} {The Boltzmann equation, Poisson random measure, stochastic differential equation, relative entropy}

\section{Introduction} \label{Introduction}
The Boltzmann equation describes the time evolution of the density function in a position-velocity space for a classical particle (molecule) 
under the influence of  other particles in a diluted (or rarified) Boltzmann gas \cite{Bo} evolving in vacuum. It forms the basis for 
the kinetic theory of gases, see, for e.g. the article by C. Cercignani \cite{Ce4}. 

\par
The aim of this work is to identify a stochastic process for which the associated  Fokker-Planck equation coincides with the Boltzmann equation in weak form. We call such processes as Boltzmann processes. 

Let $f$ be the particle  density function, which depends on time $t \ge 0$, the space variable $x \in \R^3$, and the velocity variable $z  \in \R^3$ of the point particle.  
 The Boltzmann equation has the general form 
\begin{equation}
\frac{ \partial f}{\partial t} (t, x, z)+ z \cdot \nabla_x f(t, x, z) = Q(f, f)(t,x,z), \label{eqn1.1}
\end{equation}
where $Q$ is a certain quadratic form in $f$, called collision (integral) operator. Throughout the paper $a \cdot b$ or $(a, b)$ is used to denote the scalar product of two vectors. 

\par 
\noindent Set $\Xi := (0, \pi] \times [0, 2\pi)$. Then $Q$ can be written in spherical co\"ordinates as 
\begin{equation}
Q(f, f) (t,x,z)= \int_{\R^3\times \Xi} \{f(t, x, z^\star) f(t, x, v^\star)-f(t, x, z) f(t, x, v)\}B(z, dv, d\theta)d\phi. \label{eqn1.2}
\end{equation}
 Each $v \in \R^3$ in (\ref{eqn1.2}) denotes the velocity of an incoming  particle which may hit, at the fixed location $x\in \mathbb{R}^3$,  particles whose velocity is fixed as 
$z\in \mathbb{R}^3$. 
Let $z^\star \in \R^3$ and $v^\star \in \R^3$ denote the resulting outgoing velocities corresponding to the incoming velocities $z$ and  $v$ respectively. 
$\theta$ $\in (0,\pi]$ denotes the azimuthal or colatitude angle of the deflected velocity, $v^{\star}$ (see \cite{Ta}).  Having determined $\theta$, 
the longitude angle $\phi \in [0, 2\pi)$ measures in polar coordinates,  the location of  $v^*$, and hence  that of $z^\star$, as explained below. 
 
\noindent In the Boltzmann model as the collisions are assumed to be elastic,  conservation of kinetic energy as well as momentum of the molecules holds, i.e. 
 considering particles of mass $m =1$, the following equalities hold:
\begin{equation}
\left\{
\begin{aligned}
z^\star+v^\star =z+v\\
|z^\star|^2+|v^\star|^2 =|z|^2+|v|^2 
\end{aligned}
\right. \label {eqn1.3}
\end{equation}
In fact,   
\begin{equation} 
\left\{
\begin{aligned}
z^\star=z+ ({\bf n},v-z) {\bf n}\\
v^\star=v-({\bf n},v-z){\bf n}
\end{aligned}
\right. \label {eqn1.4}
\end{equation}
   
\noindent where 
\begin {equation}\label {eqn1.5}
{\bf n}=\frac{z^\star-z}{|z^\star-z|}
\end {equation}
 where $(\cdot, \cdot)$ denotes the scalar product, and $| \cdot |$, the Euclidean norm in $\mathbb{R}^3$.

\noindent  \begin{Remark} \label {Remoutcomevel} The Jacobian of the transformation (\ref {eqn1.4}) has determinant 1 and  $(z^\star)^\star=z$ since the collision dynamics are reversible.\end {Remark}

\noindent The outgoing velocity  $z^*$ is then uniquely determined in terms of the colatitude angle $\theta\in (0,\pi]$ measured from the center, and longitude angle $\phi \in  [0,2\pi)$ of  the deflection  vector ${\bf n}$ in 
a sphere with northpole $z$ and southpole $v$ centered at $\frac{z+v}{2}$ (and with radius determined by the conserved kinetic energy) 
which are used in equation (\ref{eqn1.1}) and \eqref{eqn1.2} (see  e.g. \cite{Bo}, \cite{Ce}, \cite{Ce4}, \cite {Ta2} for futher details). It follows 

\begin{equation} (v-z,{\bf n})= |v-z|\cos(\frac{\pi}{2}- 
\frac{\th}{2})= |v-z| \sin(\frac{\th}{2}), \label{eqscalar}\end{equation}
where $\theta$ is the angle between $z-v$ and $ z^\star-v^\star$, and $\frac{\pi}{2}- \frac{\theta}{2}$ is the angle between $\bf n$ and $v-z$. In spherical coordinates we obtain
\begin{equation} (v-z,{\bf n})d{\bf n}=|v-z| \sin(\frac{\th}{2})\cos(\frac{\th}{2})d\th d\phi\label{eqcross}\end{equation}
\par 
 Figure 1 below helps to visualize the deflection vector and the angles mentioned above.
\begin{figure}
\begin{center}
\begin{tikzpicture}
  [
    scale=5,
    >=stealth,
    point/.style = {draw, circle,  fill = black, inner sep = 1.3pt},
    dot/.style   = {draw, circle,  fill = black, inner sep = .2pt},
  ]

  % the circle
  \def \rad{1}
  \node (origin) at (0,0) 
  [point, label = {below right:$\frac{v+z}{2}$}]{};;
  \draw (origin) circle (\rad);

  % triangle nodes: just points on the circle
  \node (n1) at +(90:\rad) [point, label = above:$z$] {};
  \node (n2) at +(-90:\rad) [point, label = below:$v$] {};
  
  \node (n3) at +(25:\rad)  {};
  \node (n7) at +(-25:\rad)  {};
  \node (n6) at (0.5,0.3) [point, label = {below right:$z^{\star}$}] {};
  
  \node (n4) at (-0.5, -0.3) [point, label = {below:$v^{\star}$}] {};
  
  % Points
  \node (n5) at (n3-|0,1) [point] {};
  
  \draw[thick, ->, red] (n1) -- node (a) [label = {right:${\bf n} = \frac{z^{\star} - z}{|z^{\star} - z|}$}] {} (n6);
%  \draw[->] (n5) -- node (b) {} (n3);
%  \draw[->] (n1) -- node (c) {} (n5);
    
  \draw[->, green] (n5) -- (n6);
  \draw[->, blue] (n5) -- (n3);
  \draw[->, blue] (n1) -- (n5);
  \draw 
  pic["$\phi$", draw=orange, <->, angle eccentricity=0.8, angle radius=1.8cm] {angle= n6--n5--n3};
  
  \draw 
  pic["$\frac{\pi - \theta}{2}$", draw=orange, <->, angle eccentricity=0.8, angle radius=1.8cm] {angle= n5--n1--n6};
    
  \draw[->] (origin) -- (n6);
  \draw[->] (origin) -- (n5);
  \draw
   pic["$\theta$", draw=orange, <->, angle eccentricity=0.5, angle radius=1.0cm] {angle= n3--origin--n1};
   
   %Hilfslinien
  \draw[dashed] (n3-|0,0) ellipse (0.9 and 0.15);
  \draw[dashed] (n7-|0,0) ellipse (0.9 and 0.15);
  \draw[dashed] (origin) ellipse (1 and 0.15);
 % \draw[dotted] (origin) ellipse (0.15 and 1);
  
  \draw[dashed] (-1,0) to (1,0);

\end{tikzpicture}
\end{center}
\caption{Parameterization of collisions}
\end{figure}
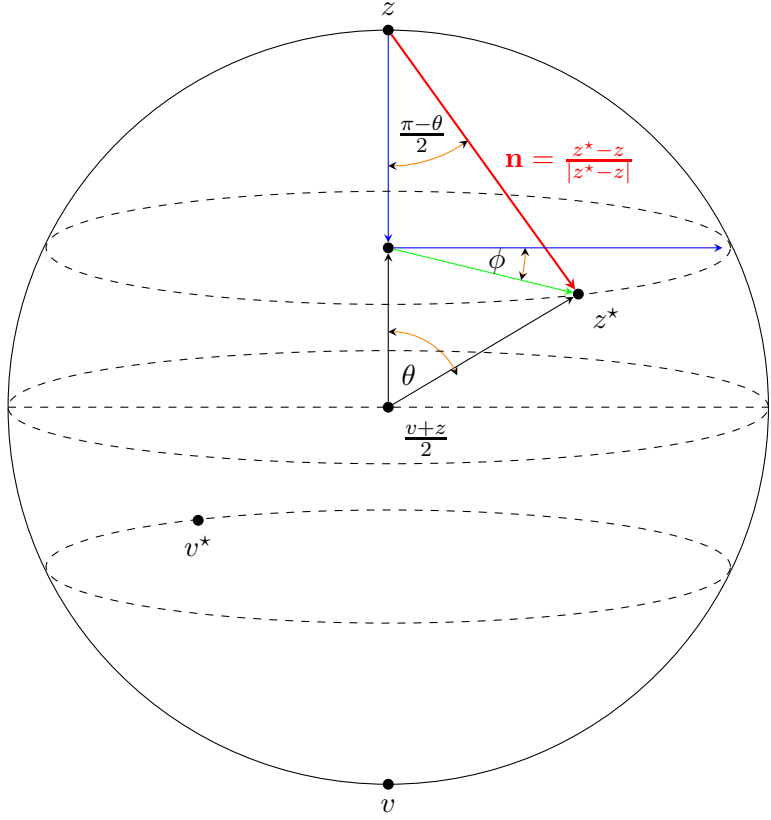

\noindent  The collision measure $B(z, dv, d\theta)$ appearing in \eqref{eqn1.2} has then the form
\begin{equation}\label{scattering measure Boltzmann}
B(z, dv, d\theta)=|v-z| \sin(\frac{\th}{2})\cos(\frac{\th}{2})d\th dv.
\end{equation}
\eqref{eqn1.1} with \eqref{scattering measure Boltzmann} is then the  ''Boltzmann equation with hard spheres'',  proposed by Ludwig Eduard Boltzmann.
In further modifications of the Boltzmann  model the collision measure $B(z, dv, d\theta)$ is a $\sigma$-finite positive measure defined on the Borel $\sigma$-field ${\mathcal B}(\R^3) \otimes {\mathcal B}((0, \pi])$, depending 
 (Lebesgue) measurably on $z \in \R^3$. The form of $B$ depends on the version of Boltzmann equation one has in mind. 
%\par \noindent
%There are several models of the Boltzmann equation
In general one sets  (see e.g. \cite{Vil}) 
\begin{equation}
B(z, dv, d\theta) := \sigma (|v-z|) dv Q(d\theta) \label{eqn1.8}
\end{equation}
where $\sigma$ is some positive function on $\R_0^+$$:=\{t\in \R:t \geq 0\}$, and $Q$ is a $\sigma$-finite measure on ${\mathcal B}((0, \pi])$. 
%These are considered in  {\bf Hypotheses A} below.
When $Q(d\theta)$ is taken to be integrable, one speaks of a cut-off function.   
In general the models considered involve  (see e.g. \cite{Vil})
\begin{enumerate}
\item[i)] $\sigma(|z-v|)=|z-v|^\gamma$ for $\gamma \in (-1,1]$
\item [ii)] $Q(d\theta)=\theta^{-1-\nu}d\theta$ for $0<\nu<1$
\end{enumerate}
Note that 
\begin{equation}\label{sigmafinitemeasure}
\int_0^\pi \theta Q(d\theta)< \infty.
\end{equation}
In the literature the molecules following the dynamics of the Boltzmann equation \eqref{eqn1.1} with \eqref{eqn1.2} are called 
\begin{itemize}
\item ''molecules with soft potential'',  if $-1<\gamma<0$ and $\nu>1/2$,
\item  ''molecules with hard potential'',  if $0<\gamma<1$ and $\nu<1/2$,
\item ''Maxwell molecules'',  if   $\gamma=0$ and  ii) holds.
\end{itemize}
We remark that in Section 4 when we consider the Boltzmann equation with hard spheres, then  $\gamma=1$. In general, $Q(d\theta)$ is taken to be a $\sigma$-finite measure.\\

     \noindent  From now on, we will assume that $B(z, dv, d\theta)$ is as in \eqref{eqn1.8} and satisfies  the following hypothesis A, which include all the above mentioned models.
            
   \noindent
   {\bf Hypotheses A:}
   
   \begin{itemize}
   \item[\bf{A1.}] The measure $Q$ is finite outside any neighbourhood of $0$, and for all $\epsilon > 0$, it satisfies 
   \[
    \int_0^{\epsilon} \theta Q(d\theta) < \infty.
   \]
   \item[\bf{A2.}] $\sigma: \R_0^+ \to \R_0^+$ (entering \eqref{eqn1.8}) is given by $\sigma(z):=c |z|^\gamma$, with $c>0$, $\gamma\in (-1,1]$ fixed.
   \end{itemize}

   \par There are many useful consequences of {\bf A1}. Let us set  
   
   \begin{equation} \alpha (z, v, \theta, \phi) := ({\bf n}, (v-z)){\bf n} \label{alpha} \end{equation}
   with $\frac{\pi}{2} - \frac{\th}{2}$ as the angle between the vectors $(v-z)$ and $\bf{n}$. Let us use the notation  
   \begin{equation*}
   \Xi:= (0,\pi]\times [0,2\pi)\,. 
   \end{equation*} 
     We define 
    \begin{equation}
    \hat{\alpha}(z,v, \theta, \phi) := \alpha(z,v, \theta, \phi)\sigma(|z-v|) \label{alphahat}
    \end{equation}
    Condition {\bf A1} implies that there exists a constant $C$ such that the following estimate holds.
      \begin{equation}
   \int_{\Xi}  |\hat{\alpha}(z,v, \theta,\phi)| Q(d\theta) d\phi \leq C|z-v|^{1+\gamma},\label{growth-L}
   \end{equation}
   and hence
   \begin{equation}
   \int_{\Xi}  |\hat{\alpha}(z,v, \theta,\phi)| Q(d\theta) d\phi \leq C(|z|^{1 +\gamma}+|v|^{1+\gamma}).\label{growth}
   \end{equation}

%  {\color{red} For a better comprehension insert here Lemma 2.1 of our uniqueness paper} Moreover, the following  parameter transformation was  introduced  for each $z\neq v$ in  \cite{Ta} (see also \cite{F}, Section 3, or \cite{FM}) 

Moreover   
\begin{align}    \label{PrameterTrans}  
\alpha(z,v, \theta,\phi)&= \frac{1-\cos(\theta)}{2}(v-z)+\frac{\sin(\theta)}{2}\Gamma(v-z, \phi)\notag \\&=\sin^2(\frac{\theta}{2})(v-z)+\frac{\sin(\theta)}{2}\Gamma(v-z, \phi)
   \end{align}
 for all $\phi $$\in [0,2\pi)$,  where 
   \begin{equation*}\Gamma(v-z, \phi)=I(v-z)\cos(\phi) +J(v-z)\sin(\phi)  \end{equation*} and  $\frac{v-z}{|v-z|}$, $\frac{I(v-z)}{|v-z|}$, $\frac{J(v-z)}{|v-z|}$ form an orthogonal basis. It follows in particular  that 
   \begin{equation}\label{symmGamma}
   \int_0^{2\pi} \Gamma(v-z, \phi)d\phi=0.
   \end{equation}

  In order to study solutions to the Boltzmann equation, one is naturally led to
study continuity properties of $(z-v,{\bf n}){\bf n}$ in $z,v$ for fixed $\theta,\phi$. However, it was already pointed out by Tanaka that 
$(z,v) \longmapsto (z-v,{\bf n}){\bf n}$ cannot be smooth.
To overcome this problem Tanaka introduced in Lemma 3.1 of \cite{Ta} another transformation of parameters, which describes a rotation around the longitude angle, is bijective and  has Jacobian 1. As a consequence of this transformation $\phi_0$, he proved following Lemma \ref{rotationTa} (see also   Lemma 2.6 in \cite{Ta}).

\begin{lemma}\label{rotationTa}\cite{Ta} (cf. \cite{FM01} or  Section 3 in \cite{FM})
There exists   a measurable function $\phi_0:\,\mathbb{R}^{12} \times (0, 2\pi]\,\to\, (0,2\pi]$ such that 
\begin{equation}\label{LipschitzGamma}
|\Gamma(v-z,\phi) -\Gamma(v'-z', \phi +\phi_0(z,v,z',v'), \phi)|\leq 3 |z-v-(z'-v')|
\end{equation}
and hence
\begin{equation}\label{Lipschitzalpha}
|\alpha(z,v,\theta,\phi) -\alpha(z', v', \theta, \phi +\phi_0(z,v,z',v', \phi))|\leq 2 \theta (|z-z'| +|v-v'|)
\end{equation}
and 
\begin{equation}\label{growthzalpha}
|\alpha(z,v,\theta,\phi)| \leq 2 \theta (|z| +|v|).
\end{equation}
\end{lemma}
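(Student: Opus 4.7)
The plan is to prove \eqref{growthzalpha}, \eqref{LipschitzGamma}, and \eqref{Lipschitzalpha} in that order, since the second and third estimates rely on the decomposition \eqref{PrameterTrans}. The growth bound \eqref{growthzalpha} is immediate from \eqref{PrameterTrans}: the definition of $\Gamma$ and the fact that $I(v-z)/|v-z|$, $J(v-z)/|v-z|$ are unit vectors give $|\Gamma(v-z,\phi)| \le |v-z|$, whence
\[
|\alpha(z,v,\theta,\phi)| \le \bigl(\sin^2(\theta/2) + \tfrac{1}{2}\sin\theta\bigr)|v-z| \le \theta(|z|+|v|) \le 2\theta(|z|+|v|),
\]
using $\sin^2(\theta/2)\le \theta/2$ and $\sin\theta \le \theta$ on $(0,\pi]$ together with $|v-z|\le |v|+|z|$.

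The heart of the lemma is the construction of $\phi_0$ and the proof of \eqref{LipschitzGamma}. The obstruction is that one cannot select the orthonormal frame $(I(w)/|w|, J(w)/|w|)$ of $w^\perp$ continuously in $w\in \mathbb{R}^3\setminus\{0\}$ (a hairy-ball obstruction on $S^2$), so a direct Lipschitz comparison of $\Gamma(w,\phi)$ with $\Gamma(w',\phi)$ must fail; the angle $\phi_0$ absorbs this defect by an extra rotation in the $(w')^\perp$-plane before comparing. My definition is as follows: set $w := v-z$, $w' := v'-z'$, and let $\phi_0(z,v,z',v')$ be the oriented angle in $(w')^\perp$ from $I(w')/|w'|$ to the normalised orthogonal projection of $I(w)/|w|$ onto $(w')^\perp$. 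This geometric prescription is explicit enough to yield a Borel measurable $\phi_0$.

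With this choice, setting $\tilde e_1 := \cos(\phi_0)\tfrac{I(w')}{|w'|} + \sin(\phi_0)\tfrac{J(w')}{|w'|}$ and $\tilde e_2 := -\sin(\phi_0)\tfrac{I(w')}{|w'|} + \cos(\phi_0)\tfrac{J(w')}{|w'|}$, one verifies $\Gamma(w',\phi+\phi_0) = |w'|(\cos\phi\,\tilde e_1 + \sin\phi\,\tilde e_2)$, and therefore
\[
\Gamma(w,\phi) - \Gamma(w',\phi+\phi_0) = \cos\phi\,\bigl(I(w) - |w'|\tilde e_1\bigr) + \sin\phi\,\bigl(J(w) - |w'|\tilde e_2\bigr).
\]
The remaining task is the pointwise estimate $|I(w) - |w'|\tilde e_1|\le C|w-w'|$ and its $J$-analogue. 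Since $\phi_0$ was chosen so that $\tilde e_1$ is the nearest unit vector in $(w')^\perp$ to $I(w)/|w|$, this reduces to controlling (i) the Lipschitz dependence of the projection $\pi_{(w')^\perp}$ on $w'$, (ii) the trivial bound $\bigl||w|-|w'|\bigr|\le |w-w'|$, and (iii) the distance from $I(w)/|w|$ to the plane $(w')^\perp$, which is of order $|w-w'|/|w|$; a careful bookkeeping of these contributions yields the explicit constant $3$ in \eqref{LipschitzGamma}.

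Finally, \eqref{Lipschitzalpha} is a routine consequence: subtracting the decomposition \eqref{PrameterTrans} applied at $(z,v,\theta,\phi)$ and at $(z',v',\theta,\phi+\phi_0)$ and inserting \eqref{LipschitzGamma},
\[
|\alpha(z,v,\theta,\phi) - \alpha(z',v',\theta,\phi+\phi_0)| \le \sin^2(\theta/2)|w-w'| + \tfrac{\sin\theta}{2}\cdot 3|w-w'| \le 2\theta(|z-z'|+|v-v'|),
\]
again using $\sin^2(\theta/2)\le \theta/2$, $\sin\theta\le\theta$, and $|w-w'|\le|z-z'|+|v-v'|$. The main obstacle I anticipate is the measurable construction of $\phi_0$ and the constant-tracking in step \eqref{LipschitzGamma}; the growth and Lipschitz bounds for $\alpha$ then follow cleanly from \eqref{PrameterTrans}.
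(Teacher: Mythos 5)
The paper does not prove Lemma~\ref{rotationTa}; it cites Tanaka's Lemma~3.1 (via Fournier--M\'el\'eard and Fournier--Mouhot), so there is no internal proof to compare against. Your overall strategy---absorbing the hairy-ball obstruction into a rotation $\phi_0$ of the target plane $(w')^\perp$ before comparing frames---is indeed the standard (Tanaka) approach, and your derivation of the growth bound \eqref{growthzalpha} and of \eqref{Lipschitzalpha} from \eqref{LipschitzGamma} is clean and correct.

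However, the central estimate \eqref{LipschitzGamma} is left as an unexecuted plan, and there is an orientation gap in your prescription of $\phi_0$. Fixing $\phi_0$ so that $\tilde e_1$ is the normalised projection of $I(w)/|w|$ onto $(w')^\perp$ determines $\tilde e_1$ (hence $\phi_0$ and hence $\tilde e_2$), but it does \emph{not} by itself guarantee that $\tilde e_2$ lies near $J(w)/|w|$ rather than near $-J(w)/|w|$: both sign choices are unit vectors in $(w')^\perp$ orthogonal to $\tilde e_1$. Unless the frames are chosen with a coherent orientation (for instance $J(u)=\frac{u}{|u|}\times I(u)$ for every $u$) and you check that the induced rotation preserves it, $\tilde e_2$ can land near $-J(w)/|w|$, in which case $|J(w)-|w'|\tilde e_2|$ is comparable to $2|w|$ no matter how small $|w-w'|$ is, and the Lipschitz bound fails. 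Beyond this, the ``careful bookkeeping'' you defer---the quantitative bound of $|I(w)-|w'|\tilde e_1|$ and $|J(w)-|w'|\tilde e_2|$ by multiples of $|w-w'|$ summing to the constant $3$, together with the degenerate case in which $I(w)$ projects to $0$ in $(w')^\perp$---\emph{is} the content of Tanaka's Lemma~3.1; naming the three sources of error and asserting that they combine to give $3$ does not constitute a proof. You should either carry out that estimate explicitly (tracking both components of the frame and the orientation) or invoke Tanaka's lemma outright, as the paper itself does.
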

   
%  Moreover by his  transformation  \cite{Ta} (see also \cite{FM}), Section 3)  and  using   \eqref{PrameterTrans} Tanaka  could prove that the following inequality holds 
From \eqref{Lipschitzalpha} it follows

     \begin{eqnarray}  & \int_0^\pi \left|  \int_0^{2\pi} {\alpha}(z,v, \theta,\phi) - {\alpha}(z',v', \theta, \phi)d\phi\right|Q(d\theta) 
   \nonumber \\&\leq C (|z-z'|+|v-v'|).\label{B3} 
   \end{eqnarray}
where by an abuse of notation we used the same constant $C>0$ for \eqref{growth} and \eqref{B3}.

\smallskip \noindent 
To write  \eqref{eqn1.1}, \eqref{eqn1.2} in its weak  form (in the sense of partial differential equations), we apply  a result of 
Tanaka \cite {Ta}. Let us denote by $ C(\mathbb{R}^{d})$, resp. $C_0(\mathbb{R}^{d})$, resp. $C_0^n (\mathbb{R}^{d})$ the space of real valued continuous functions, resp.continuous functions with compact support, resp. $n$ differentiable functions with compact support  on $\mathbb{R}^{d}$.
\begin {proposition} \label {PropAppTanaka}  
Let $\psi(u,v,y,z ) \in C_0(\mathbb{R}^{12})$, as a function of  $u,v,y,z \in \mathbb{R}^3$.  For each $\theta \in (0, \pi]$ fixed 
 
\begin {equation}
\int_{\mathbb{R}^{6}\times [0,2 \pi)} \psi(u,v,y^\star,z^\star)  d\phi du dv
= \int_{\mathbb{R}^{6}\times [0,2 \pi)} \psi(u^\star,v^\star,y,z)  d\phi du dv \label{eqn1.9}
\end {equation}

\end {proposition}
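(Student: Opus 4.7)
The plan is to prove \eqref{eqn1.9} by a change-of-variables argument. Fix $\theta \in (0, \pi]$. The two crucial ingredients are: first, by Remark \ref{Remoutcomevel}, for each fixed $\phi \in [0, 2\pi)$ the collision map $(a, b) \mapsto (a^\star, b^\star)$ is a $C^1$-involution of $\mathbb R^6$ with Jacobian determinant $1$; second, Tanaka's rotation lemma, Lemma \ref{rotationTa}, which furnishes the measurable function $\phi_0$ reconciling the polar frames based at the two distinct incoming pairs $(u,v)$ and $(y,z)$.

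I would first invoke Fubini — which is valid since $\psi \in C_0(\mathbb R^{12})$ is bounded and has compact support — to write each side of \eqref{eqn1.9} as a $\phi$-integral of a $(u,v)$-integral. On the right-hand side, I would then substitute $(u', v') = (u^\star, v^\star)$ for each fixed $\phi$: by the first key fact the Lebesgue measure $du\,dv$ is preserved, and by the involution property $((u')^\star, (v')^\star) = (u, v)$. Combined with a shift of the azimuthal variable by Tanaka's $\phi_0$ to align the two spherical frames, one sees that the transformed integrand $\psi(u^\star, v^\star, y, z)$ becomes $\psi(u, v, y^\star, z^\star)$, and undoing Fubini then produces the left-hand side.

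The main obstacle I anticipate is the bookkeeping step: verifying that, after the joint change of variable in $(u, v)$ and in $\phi$, the four blocks of arguments of $\psi$ match up exactly as claimed. This amounts to a geometric check that under the substitution $\phi \mapsto \phi + \phi_0 \bmod 2\pi$, which is a measure-preserving bijection of $[0, 2\pi)$ and is therefore absorbed by integrating over the full circle, the post-collisional pair of $(y,z)$ in its own spherical frame coincides with the natural image of $(y,z)$ in the $(u,v)$-frame. This is exactly the content of Tanaka's construction in Lemma \ref{rotationTa}; once it is spelled out, the identity reduces to a routine application of the change-of-variables formula and Fubini's theorem.
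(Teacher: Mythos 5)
The paper supplies no proof of Proposition \ref{PropAppTanaka} — Remark \ref{RemExtensionPropTanaka} cites the Appendix of \cite{Ta} for it — so you are reconstructing an argument the paper merely imports. Your proposal, however, rests on a premise that does not hold. Its first "key ingredient" reads Remark \ref{Remoutcomevel} as saying that, for each \emph{fixed} $\phi$, the map $(u,v)\mapsto(u^\star,v^\star)$ is a $C^1$ involution of $\mathbb{R}^6$ preserving Lebesgue measure. Remark \ref{Remoutcomevel} asserts this only for the transformation \eqref{eqn1.4} with the \emph{unit vector $\mathbf{n}$ held fixed}. When $(\theta,\phi)$ are held fixed instead, $\mathbf{n}$ depends on $(u,v)$ through the spherical parametrization, and the resulting map is neither an involution, nor measure-preserving, nor even $C^1$; the paper says so explicitly just before Lemma \ref{rotationTa}. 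This is precisely why the statement contains the $\phi$-integral at all: if a substitution at each fixed $\phi$ were legitimate, the identity would hold pointwise in $\phi$ and Tanaka's $\phi_0$ would be superfluous. The subsequent "shift of the azimuthal variable by $\phi_0$" cannot repair this, because $\phi_0$ in Lemma \ref{rotationTa} depends on $(u,v)$ as well as on $\phi$; it is not a translation of $[0,2\pi)$ that commutes with the $du\,dv$ integration. In short, the step you yourself flag as the "main obstacle" is exactly where the argument breaks, and it is not a routine bookkeeping check.

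What Tanaka's appendix actually does is a \emph{joint} change of variables on the triple $(u,v,\phi)$ for each fixed $\theta$: one shows that $\Psi_\theta\colon(u,v,\phi)\mapsto(u^\star,v^\star,\phi')$, where $\phi'$ is the azimuth of $-\mathbf{n}$ in the spherical frame built on $(u^\star,v^\star)$, is a measurable involution of $\mathbb{R}^6\times[0,2\pi)$ whose Jacobian has absolute value $1$. The transparent way to see this is to pass through $\mathbf{n}$-coordinates: for fixed $\mathbf{n}$ the map $(u,v)\mapsto(u^\star,v^\star)$ is linear with absolute Jacobian $1$ (this is the actual content of Remark \ref{Remoutcomevel}); the colatitude of $\mathbf{n}$ with respect to the relative velocity is preserved because $(\mathbf{n},v^\star-u^\star)=-(\mathbf{n},v-u)$; and the Jacobian in passing between $d\mathbf{n}$ and $d\theta\,d\phi$ depends only on $\theta$ and so cancels between the two frames. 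Once $\Psi_\theta$ is established, \eqref{eqn1.9} follows in one line by substituting $\Psi_\theta$ into either side.
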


\begin{Remark}\label{RemExtensionPropTanaka} The  result in Proposition \ref{PropAppTanaka}    is proven in  the Appendix of   \cite{Ta}, by using equation \eqref{eqn1.4}. Since $ C_0(\mathbb{R}^{12})$ is dense in  the space of integrable functions $L^1 (\mathbb{R}^{12})$ the result can be extended to continuous  and integrable  functions $\psi(u,v,y,z ) \in C(\mathbb{R}^{12})\cap L^1 (\mathbb{R}^{12}) $.
\end{Remark}

By integration of the Boltzmann equation (\ref{eqn1.1}) with a test function $\Psi \in C^2_0(\mathbb{R}^6)$, the derivation of the weak formulation of the Boltzmann equation is obtained  in a similar way as in Tanaka \cite {Ta}, \cite{Ta3}, who treated the spatially homogeneous case, with $\sigma \equiv 1$. 

\smallskip \noindent 
In fact, consider the Boltzmann equation \eqref{eqn1.1} with collision operator \eqref{eqn1.2}. We multiply \eqref{eqn1.1} by a function  $\psi$  (of $(x, z) \in \mathbb{R}^6$)
 belonging to  $C^2_0(\mathbb{R}^6)$, 
and integrate with respect to $x$ and $z$, using integration by parts and \eqref{eqn1.9},   we arrive at the weak formulation  of the Boltzmann equation: 
\begin{align}
&\int_{\mathbb{R}^6} \psi(x,z) \frac{ \partial f}{\partial t} (t, x, z)dxdz - \int_{\mathbb{R}^6} f(t, x, z)(z,  \nabla_x \psi(x,z)) dxdz \notag\\
&= \int_{\mathbb{R}^6} f(t, x, z) L_{f(t)}\psi(x,z) dxdz \label {WBE}
\end {align}
 for all $t\in \mathbb{R}_+$  with
\begin{equation}\label{generatornonlinear}
 L_{f(t)}\psi(x,z)= \int_{\mathbb{R}^3\times (0,\pi]\times [0,2 \pi)} \{\psi(x,z^\star)- \psi(x,z)\}f(t, x, v)B(z, dv, d\theta)d\phi,
\end{equation}
where $B$ is as in \eqref{eqn1.8}.

\noindent The integral form  of equation \eqref{WBE} corresponds to 
\begin{align}
&\int_{\mathbb{R}^6} \psi(x,z)  f(t, x, z)dxdz = \int_{\mathbb{R}^6} \psi(x,z)  f(0, x, z)dxdz\notag\\&+ \int_0^t\int_{\mathbb{R}^6} f(s, x, z)(z,  \nabla_x \psi(x,z)) dxdzds 
+\int_0^t\int_{\mathbb{R}^6} f(s, x, z)  L_{f(s)}\psi(x,z) dxdz ds, \label {IWBE}
\end {align}

  \begin{dfn}\label{solBE} A collection  of densities $\{f(t,x,z)\}_{t \in \mathbb{R}_+}$,  with $x,z$ $\in \mathbb{R}^3$, is a strong (resp. weak)  solution of the Boltzmann equation in $[0,T]$ if for any $t\in [0,T]$ it solves  \eqref{eqn1.1} (resp.  \eqref{IWBE} for all $\psi \in$ $C^2_0(\mathbb{R}^6)$). 
  \end{dfn}
   We remark that a  strong solution of the Boltzmann equation is obviously also a weak solution of the Boltzmann equation.

\par
 We denote with  $\mathbb{D}:= \mathbb{D}(\mathbb{R}_+,\mathbb{R}^3)$ the  space of all  right continuous functions with left limits defined  on 
$[0, \infty)$ taking values in $\mathbb{R}^3$, and equipped with the topology induced by the Skorohod metric (see e.g. \cite {Bi}). 

   \begin{dfn}\label{Denskog}
        Let $T > 0$. Let   $(X_s,Z_s)_{s \in [0, T]}$  be a stochastic process with paths on  $\mathbb{D}\times \mathbb{D}$, and with time-marginal  
  densities $\{f(t,x,z)\}_{t \in [0, T]}$. Then $(X_s,Z_s)_{s \in [0, T]}$ is called a "Boltzmann process" if the collection of densities $\{f(t,x,z)\}_{t \in [0, T]}$ solves equation \eqref{IWBE}. 
           \end{dfn}    
We remark that the Fokker-Planck equation of a Boltzmann process is the Boltzmann equation, and the infinitesimal generator of a Boltzmann process is given by $ (z, \nabla_x) + L_{f(t)}$. 

\par
Costantini and Marra \cite{ConstantiniMarra1992} analyzed
hydrodynamical limits of a process given by a the drift term involving both $ (z, \nabla_x)$ and  $ L_{f(t)}$ in addition to a martingale term. Such a process in 
\cite{ConstantiniMarra1992} is not a Boltzmann process according to Definition \ref{Denskog}. 

\par
Stochastic analytic methods for studying the spatially homogeneous Boltzmann equation, which is obtained by integrating the Boltzmann equation over the space variable $x$, were initiated for Maxwellian molecules (non-cutoff case) by Tanaka \cite{Ta} \cite{Ta2} \cite{Ta3} even though Gr\"unbaum had earlier established the propagation of chaos result when $Q(0, \pi]$ is finite, i.e., for the cutoff case. The spatially homogeneous Boltzmann equation for other types of potentials and related problems were studied by several authors such as Funaki \cite{F1} \cite{F2}, Horowitz and Karandikar \cite{HK90}, Fournier \cite{F}, Fournier and Mouhot \cite{FM}, Fournier and M\'el\'eard \cite{FM01}, Lu and Mouhot \cite{LM12}, and Villani \cite{V}. 

\par
However, the equation as given by Boltzmann \cite{Bo} (also see \cite{Ce4} \cite{CIP}) is non-homogeneous i.e., depends on position and velocity, and poses a challenging problem to cast and study it by means of stochastic analysis. A rigorous derivation of the Boltzmann equation from a microscopic model, and the existence, uniqueness and properties of  solutions to  the Boltzmann equation presents many difficult and open problems. To overcome some of the difficulties, Morgenstern \cite{Mo} ``mollified" $Q(f, f)$ by replacing it by 
\begin{align*}
&Q_M(f, f)(t,x,u) \\
& = \int \{f(t, x, u^\star) f(t, y, v^\star)-f(t, x, u) f(t, y, v)\}K_M(x, y)B(u, dv, d\th) dy d\phi
\end{align*}
with some measurable $K_M$ and $B$    such that $K_M(x, y)B(u, dv, d\th)$ has a bounded density with respect to Lebesgue measure $dv \times d\th$, and 
obtaining a global existence theorem in $L^1(\R^3 \times \R^3)$. Povzner \cite{Pov} obtained existence and uniqueness, under suitable conditions, in the space of Borel measures in $x$, with  the term $K_M(x, y)B(u, dv, d\th) dy d\phi$ replaced by $K_P(x-y, u-v)dv dy$ (with  a suitable reinterpretation of the relations between 
$x, u^*, v^*$ and $y, u, v$, and suitable moments assumptions on $K_P$). 

\par
According to \cite{Ce4} (p. 399), this modification of Boltzmann's equation by Povzner is ``{\it close to physical reality}".  (cf. Rezakhanlou \cite{Re}). In a joint work with Albeverio \cite{ARS}, we considered the Boltzmann-Enskog equation, i.e., equation \eqref{eqn1.1}  with
\begin{align}
&Q_E^{\beta}(f, f) (t,x,u) \notag\\
& = \int_{\Lambda} \int_{\R^3} \{f(t, y, u^*)f(t, x, v^*) - f(t, y, u)f(t, x, v)\} \beta(|x - y|)dy B(u, dv, d\th)d\phi \label{collker}
\end{align}
where $\beta \in C_0(\mathbb{R}^1)$. 
 Moreover, we identified the Boltzmann-Enskog process as a solution of a McKean-Vlasov SDE, i.e.,  identified the stochastic process for which the associated Fokker-Planck equation is the Boltzmann-Enskog equation which is given by equation \eqref{eqn1.1} with collision kernel \eqref{collker}.
 Boltzmann-Enskog processes were constructed as the limit of suitable interacting particle systems \cite{FRS1}, and their  uniqueness and continuity properties were studied by us \cite{FRS2} jointly with Friesen. 
 
 \par
 The existence of solutions to the Boltzmann equation on a finite time interval has been established by Di Perna and Lions \cite{DL}.
 In the present article, we identify Boltzmann processes for which the associated  Fokker-Planck equation is the Boltzmann equation \eqref{eqn1.1} for hard spheres. This is accomplished by first assuming that the Boltzmann equation has a probability density-valued solution 
 $\{f(t, x, z)\}_{t \in [0, T]}$.   Using it, we prove the existence of a stochastic process such that its law (or probability distribution) $\{\mu_t\}_{t \in [0, T]}$ is a solution of a bilinear Boltzmann equation \eqref{weakbilinearBEmeasures} that involves $f$ and is introduced in this article. When $\mu_t$ admits a density $g(t,x, z)$ for each $t$, we show that $g(t, x, z) = f(t, x, z)$ a.e. under suitable conditions on $f$ and $g$. (In  \cite{ARS2} we gave a slightly different proof of this assertion.) Since $g$ coincides with $f$, the bilinear Boltzmann equation associated with $f$ 
 \eqref{weakbilinearBEmeasures} has a unique solution $\{f(t, x, z)\}_{t \in [0, T]}$, which also solves the Boltzmann equation \eqref{eqn1.1} by our assumption. 
 
 \par 
 The organization of the paper is as described below: In Section \ref{Sec bilinear Boltzmann eq}, we introduce a ``bilinear Boltzmann equation'' \eqref{eqn1.1lin} associated to a solution of  $\{f(t,x,z)\}_{t \in [0,T]}$ of the Boltzmann equation. In   Theorem \ref{thm-uniqueness IWlinBE-given-f} we show that  its only solution is $\{f(t,x,z)\}_{t \in [0,T]}$, i.e. the unique solution of the bilinear Boltzmann equation associated to $\{f(t,x,z)\}_{t \in [0,T]}$ is $\{f(t,x,z)\}_{t \in [0,T]}$ itself. This suggests that, once  a solution $\{f(t,x,z)\}_{t \in [0,T]}$ of the Boltzmann equation is given, the associated Boltzmann process can be found by first solving a  Poisson type  SDE whose Fokker -Planck equation is the bilinear Boltzmann equation associated to $\{f(t,x,z)\}_{t \in [0,T]}$.\\
 
In Subsection \ref{Par Invariants BE}, we recall that the kinetic energy of the Boltzmann model is conserved and  as a consequence,  in Lemma  \ref{Lemmaconserved}, prove an inequality for the bilinear Boltzmann equation   \eqref{eqn1.1lin}. This will allow us in Theorem \ref{Th moment control} in  Section \ref{Sec Existence Boltzmann process} to state in particular that the second moment of the SDE associated to the bilinear Boltzmann equation is finite.\\
           In Section \ref{Sec Boltzmann process}, Theorem \ref{Ito}, we construct a McKean-Vlasov stochastic differential equation (McKean-Vlasov SDE) solved by   the Boltzmann process, i.e. 
 we construct the McKean -Vlasov SDE  (\eqref{eq-spaceB},\eqref{eq-velB})  associated 
to equation \eqref{WBE} in $[0,T]$  and  prove  that any solution  $(X_t,Z_t)_{t\in [0,T]}$ is a Boltzmann process.\\
In Section \ref{Sec Existence Boltzmann process} we find sufficient conditions for the existence of  the solution of  (\eqref{eq-spaceB},\eqref{eq-velB}). The main theorem in this section  is Theorem \ref{mainThexistenceBoltzmann}, wherein given a strong  solution $\{f(t,x,z)\}_{t \in [0,T]}$ 
of the Boltzmann equation  \eqref{eqn1.1}, we find sufficient conditions for the  existence of a solution of the   McKean -Vlasov equation (\eqref{eq-spaceB}, \eqref{eq-velB}) with density $\{f(t,x,z)\}_{t \in [0,T]}$. The solution process $\{X_t,Z_t\}_{t\in [0,T]}$ is then a  Boltzmann process. \\ The proof of Theorem \ref{mainThexistenceBoltzmann}  is carried out by first analyzing the existence of the SDE associated to the bilinear Boltzmann equation \eqref{eqn1.1lin} in Subsection \ref{Par Construction linear BP}. It depends on other results stated and proven in Section \ref{Sec Existence Boltzmann process}. \\
%by providing the   
           %Mc Kean -Vlasov equation associated to \eqref{WBE} and by proving the existence of its solutions. \\

\section{The bilinear Boltzmann equation}\label{Sec bilinear Boltzmann eq}
In this section we introduce a class of ''bilinear Boltzmann equations'' and discuss how their solutions are  related to the solutions of the Boltzmann equation. 

 We introduce the following definitions.

\begin{dfn}\label{DeflinBE} Let $T>0$.
Let  $\{f(t,x,z\}_{t \in \mathbb{R}_+}$  be  a strong  solution of the Boltzmann equation in $[0,T]$. 
 A collection  of densities $\{g(t,x,z\}_{t \in \mathbb{R}_+}$,  with $x,z$ $\in \mathbb{R}^3$, is a   strong  solution of the ''bilinear Boltzmann equation related to $\{f(t,x,z\}_{t \in \mathbb{R}_+}$''  in $[0,T]$ , if for any $t\in [0,T]$  it solves 
\begin{equation}
\frac{ \partial g}{\partial t} (t, x, z)+ z \cdot \nabla_x g(t, x, z) = Q(f, g)(t,x,z), \label{eqn1.1lin}
\end{equation}
with 
\begin{equation}
Q(f, g) (t,x,z)= \int_{\R^3\times \Xi} \{f(t, x, z^\star) g(t, x, v^\star)-f(t, x, z) g(t, x, v)\}B(z, dv, d\theta)d\phi. \label{eqn1.2lin}
\end{equation}

\end{dfn}

\begin{dfn}\label{DefIWlinBE} Let $T>0$.
Let  $\{f(t,x,z\}_{t \in \mathbb{R}_+}$  be  a strong  solution of the Boltzmann equation in $[0,T]$. 
 A collection  of densities $\{g(t,x,z\}_{t \in \mathbb{R}_+}$,  with $x,z$ $\in \mathbb{R}^3$, is a   weak  solution of the ''bilinear Boltzmann equation related to $\{f(t,x,z\}_{t \in \mathbb{R}_+}$''  in $[0,T]$ , if for any $t\in [0,T]$ and  all $\psi \in$ $C^2_0(\mathbb{R}^6)$ it solves 
 \begin{align}
&\int_{\mathbb{R}^6} \psi(x,z)  g(t, x, z)dxdz = \int_{\mathbb{R}^6} \psi(x,z)  g(0, x, z)dxdz\notag\\&+ \int_0^t\int_{\mathbb{R}^6} g(s, x, z)(z,  \nabla_x \psi(x,z)) dxdzds + \int_0^t  Q_s(f,g)(\psi) ds 
\end {align}
with $ Q_s(f,g)$ being the operator 
  \begin{equation} Q_s(f,g)(\psi):=
\int_{\mathbb{R}^6} g(s,x,z)   L_{f(s)}\psi(x,z)  dxdz. \label{OperatorQbi} 
\end{equation} 
\end{dfn}
Obviously any   strong  solution of the ''bilinear Boltzmann equation related to $\{f(t,x,z\}_{t \in \mathbb{R}_+}$''  in $[0,T]$ is also a   weak  solution of the ''bilinear Boltzmann equation related to $\{f(t,x,z)\}_{t \in \mathbb{R}_+}$''  in $[0,T]$, but not vice versa.\\

 Let   $\{\mu_t(dx,dz)\}_{t \in {\R}_+^0}$ be a collection of probability measures  on $(\R^3\times \R^3, \mathcal{B}(\R^3\times \R^3))$. Let us define the operator 
 \begin{equation} Q_s(f,\mu)(\psi):=
\int_{\mathbb{R}^6}   L_{f(s)}\psi(x,z) \mu_s(dx, dz). \label{OperatorQProbmeasure}\end{equation} 
acting on all $\psi$ for which the integral on the right side is finite.  Let us remark that (by abuse of notation)    $Q_s(f,\mu)=  Q_s(f,g)$ if $\mu_s=g(s,x,z)dx dz$.
\begin{dfn}\label{DefIWlinBEmeasures}
Let  $\{f(t,x,z)\}_{t \in \mathbb{R}_+}$  be  a strong  solution of the Boltzmann equation in $[0,T]$. 
 A collection  $\{\mu_t(dx,dz)\}_{t \in {\R}_+^0}$  of probability measures on $(\R^3\times \R^3, \mathcal{B}(\R^3\times \R^3))$, is a   weak  solution of the ''bilinear Boltzmann equation related to $\{f(t,x,z)\}_{t \in \mathbb{R}_+}$''  in $[0,T]$ , if for any $t\in [0,T]$ and  all $\psi \in$ $C^2_0(\mathbb{R}^6)$ it solves 
 \begin{align}\label{weakbilinearBEmeasures}
&\int_{\mathbb{R}^6} \psi(x,z)  \mu_t(dx,dz) = \int_{\mathbb{R}^6} \psi(x,z)  \mu_0(dx,dz)\notag\\&+ \int_0^t\int_{\mathbb{R}^6} (z,  \nabla_x \psi(x,z)) \mu_s(dx,dz)ds + \int_0^t  Q_s(f,\mu)(\psi) ds 
\end {align} 
\end{dfn}

\subsection{The relation between bilinear Boltzmann equation and Boltzmann equation}\label{Par bilinear BE}

In order to establish the relationship between the equations \eqref{eqn1.1lin} and \eqref{eqn1.1}, we introduce the following hypothesis.

 {\bf C1} The densities $f(t, x, z)$ and $g(t, x, z)$ are in $C^{1,2}([0, T] \times \mathbb{R}^6)$ and are strictly positive-valued a.e. with 
$ g\log g, g\log f \in L^1(\mathbb{R}^6)$ for each $t \in [0, T]$ and  $\lim_{|x|\to \infty} g(t,x,z)=0$

 \begin{theorem} \label{thm-uniqueness IWlinBE-given-f}   
Assume  the collection of densities  $\{f(t,x,z\}_{t \in \mathbb{R}_+}$ is  a strong  solution of the Boltzmann equation in $[0,T]$ and the  collection  of densities $\{g(t,x,z\}_{t \in \mathbb{R}_+}$ is  a  strong  solution of the ''bilinear Boltzmann equation related to $\{f(t,x,z\}_{t \in \mathbb{R}_+}$''  in $[0,T]$. Let the hypothesis {\bf C1} be satisfied  and $g(0,x,z) = f(0,x,z)$ a.e.\\
Then $g(t,x,z)=$ $f(t,x,z)$ $\quad a.e.$ for all $t\in [0,T]$.  
 \end{theorem}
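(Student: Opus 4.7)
My approach is via relative entropy. Define
$$\phi(t) := H\bigl(g(t)\,|\,f(t)\bigr) = \int_{\R^6} g(t,x,z)\,\log\!\frac{g(t,x,z)}{f(t,x,z)}\,dxdz.$$
Under Hypothesis C1 this is well-defined, and by Gibbs' inequality together with the mass conservation $\int g(t)\,dxdz = \int f(t)\,dxdz = 1$ (obtained by integrating \eqref{eqn1.1} and \eqref{eqn1.1lin}, using the decay at infinity from C1 and the Tanaka identity \eqref{eqn1.9} to kill the collision contributions) one has $\phi(t) \ge 0$, while the hypothesis $g(0) = f(0)$ a.e.\ gives $\phi(0) = 0$. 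The plan is to establish a differential inequality $\phi'(t) \le C\,\phi(t)$ with $C = C(T,f)$; Gronwall then forces $\phi \equiv 0$ on $[0,T]$, which combined with strict positivity of $f$ and non-negativity of the integrand yields $g(t,x,z) = f(t,x,z)$ a.e.

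To compute $\phi'(t)$, I differentiate under the integral (justified by the $C^{1,2}$ regularity in C1 and the $L^1$ hypotheses on $g\log g,\, g\log f$), insert the PDE \eqref{eqn1.1lin} for $\partial_t g$ and \eqref{eqn1.1} for $\partial_t f$, and use the pointwise identity
$$\bigl(1+\log(g/f)\bigr)\nabla_x g - (g/f)\nabla_x f = \nabla_x\bigl(g\log(g/f)\bigr).$$
Combined with $g(t,x,z) \to 0$ as $|x|\to\infty$, the transport contributions cancel after integration by parts in $x$. Setting $R := g/f$ and rewriting the collision integrals via the weak formulation \eqref{generatornonlinear} and the symmetry \eqref{eqn1.9}, what remains is
$$\phi'(t) = \int_{\R^6\times\R^3\times\Xi} \bigl\{R_*\log(R^\star/R) - (R^\star - R)\bigr\}\,f f_*\,B(z,dv,d\theta)\,d\phi\,dxdz,$$
with the shorthand $R_* = R(t,x,v),\ R^\star = R(t,x,z^\star)$, and similarly for $f$.

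The heart of the proof is to extract the Gronwall bound from this integrand. I split it as $[R\log(R^\star/R) - (R^\star - R)] + (R_* - R)\log(R^\star/R)$; the first bracket is pointwise $\le 0$ by $\log u \le u-1$ and is dropped. For the cross term I symmetrize over the discrete group generated by $(z,v)\leftrightarrow(v,z)$ (which leaves $f f_* B$ invariant since $\sigma(|v-z|) = \sigma(|z-v|)$) and the involutive pre-/post-collisional swap $(z,v)\leftrightarrow(z^\star,v^\star)$ (unit Jacobian, Remark \ref{Remoutcomevel}). Then I Taylor-expand $R^\star-R$ and $R_*^\star-R_*$ in the deflection $\alpha(z,v,\theta,\phi)$ of \eqref{PrameterTrans}: since $|\alpha| = O(\theta)$, the grazing singularity $\theta^{-1-\nu}$ of $Q(d\theta)$ is absorbed, with the remaining $\theta$-integral controlled by Hypothesis A1. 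The spatial growth is handled by \eqref{growth} together with the finite moments supplied by Lemma \ref{Lemmaconserved}, producing a bound of the form $\phi'(t) \le C(T,f)\,\phi(t)$.

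The main obstacle is precisely this last bounding step: one must carry out the symmetrizations and the Taylor expansion carefully enough to dominate the cross term by a multiple of $\phi(t)$ itself, typically via a Cauchy--Schwarz-type splitting whose one factor reduces, after the symmetric rearrangement, to $\phi(t)$ (using the Csisz\'ar--Kullback--Pinsker-type control $(R-1)^2 f \lesssim g\log(g/f)$ in integrated form) and whose other factor is a moment of $f$ finite by Lemma \ref{Lemmaconserved}. Once $\phi'(t) \le C\,\phi(t)$ is in hand, Gronwall with $\phi(0) = 0$ and $\phi \ge 0$ forces $\phi \equiv 0$ on $[0,T]$, hence $g(t,x,z) = f(t,x,z)$ a.e., as claimed.
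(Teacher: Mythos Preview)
Your overall strategy---relative entropy plus the elementary inequality $1+\log u - u\le 0$---is exactly the paper's. The transport cancellation you describe is also what the paper does. The discrepancy is in your collision formula: you write
\[
\phi'(t)=\int\bigl\{R_*\log(R^\star/R)-(R^\star-R)\bigr\}ff_*\,B,
\]
with $R_*=R(t,x,v)$ in front of the logarithm. The paper, working through the weak form $Q_s(f,g)(\psi)=\int g(s,x,z)\,L_{f(s)}\psi\,dxdz$ of \eqref{OperatorQbi} (which is the convention consistent with the SDE \eqref{eq-velr} and with the rest of the proof), instead obtains $R$ in that slot:
\[
\phi'(t)=\int\bigl\{R\log(R^\star/R)-(R^\star-R)\bigr\}ff_*\,B
=\int R\bigl[1+\log(R^\star/R)-R^\star/R\bigr]ff_*\,B\le 0.
\]
In other words, with the correct manipulation your ``first bracket'' is already the \emph{entire} integrand: the cross term $(R_*-R)\log(R^\star/R)$ simply never appears. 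Since $\phi(0)=0$ and $\phi\ge 0$, one gets $\phi\equiv 0$ immediately---no Gronwall, no symmetrization over $(z,v)\leftrightarrow(v,z)$, no Taylor expansion in $\alpha$, no Csisz\'ar--Kullback--Pinsker control.

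So the gap in your proposal is not that your route is wrong in spirit, but that a slip in the collision bookkeeping (taking $g$ in the $v$-slot rather than the $z$-slot, as the weak form dictates) generates a spurious cross term that you then try to tame by heavy machinery you do not actually carry out. You yourself flag this step as ``the main obstacle,'' and the sketch you give (symmetrize, Taylor expand, Cauchy--Schwarz against a Pinsker-type bound) is neither complete nor clearly feasible under the stated hypotheses. Redo the computation of $A=\int(1+\log R)\,Q(f,g)$ using $\int\psi\,Q(f,g)=\int g(z)[\psi(z^\star)-\psi(z)]f(v)\,B$; you will land directly on the paper's one-line sign argument.
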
 

\begin{proof} 
To prove Theorem \ref{thm-existence-given-f}, it suffices to show the following equality 
  written in terms of relative entropy 
 \begin{equation} \label{relative entropy eq}
 R_t(g|f):= \int_{\mathbb{R}^6} \ln{\left(\frac{g(t.x.z)}{f(t,x,z)}\right)} g(t,x,z) dxdz = 0 \quad \forall \,t\in [0,T].
 \end{equation}
Indeed, since $\{g(t,x,z\}_{t \in \mathbb{R}_+}$ is a collection of densities, it follows that
  \begin{align*}
  &\int_0^t  \int_{\mathbb{R}^6 } g(s,x,z) \frac{\partial}{ \partial s}  \ln{(g(s,x,z))} dx dz ds =  \int_0^t  \int_{\mathbb{R}^6 }\frac{\partial}{ \partial s} g(s,x,z) dx dz ds  \notag \\&=  \int_{\mathbb{R}^6 }(g(t,x,z) -g(0,x,z)) dxdz =0\notag 
  \end{align*}
 Integration by parts yields 
  \begin{align*}
&\int_0^t  \int_{\mathbb{R}^6 } \ln{(g(s,x,z))}  \frac{\partial}{ \partial s}   g(s,x,z)dx dz ds =\\& \int_{\mathbb{R}^6 }(\ln{(g(t,x,z))}g(t,x,z) - \ln{(g(0,x,z))}g(0,x,z)) dx dz
\end{align*}
Next, consider 
\begin{align}\label{ggradln}
  &  \int_0^t\int_{\mathbb{R}^6 }  g(s,z,x)  z \cdot \nabla_x \ln{(g(s,x,z))} dxdz\notag \\& =\int_0^t\int_{\mathbb{R}^6 }  z \cdot \nabla_x  g(s,z,x) dxdz =0 
  \end{align}
  where the last equality is obtained from integrating by parts, and 
   the condition  $\lim_{|x|\to \infty} g(t,x,z)=0$. 
  From \eqref{ggradln} we get 
\begin{equation}\label{gradgln}
  \int_0^t\int_{\mathbb{R}^6 }  \ln{(g(s,x,z))}  z \cdot \nabla_x  g(s,z,x)  dxdzds = 0
  \end{equation}
since $\int_0^t \int_{\mathbb{R}^6} \nabla_x( z \cdot g(s, x, z)\ln g(s, z, x))dxdzds
= 0$ by {\bf{C1}}.
Using the fact that  $\{g(t,x,z\}_{t \in \mathbb{R}_+}$ is  a  strong  solution of the bilinear Boltzmann equation related to $\{f(t,x,z\}_{t \in \mathbb{R}_+}$  in $[0,T]$, it follows 
 \begin{align} \label{lng}
  & \int_{\mathbb{R}^6} \ln{(g(t,x,z))} g(t,x,z) dxdz - \int_{\mathbb{R}^6} \ln{(g(0,x,z))}  g(0,x,z) dxdz \notag \\& 
 =\int_0^t \int_{\mathbb{R}^6} \ln{(g(t,x,z))}  \frac{\partial}{\partial s} g(s,x,z) dxdzds\notag\\&
=\int_0^t\int_{\mathbb{R}^6} \ln{(g(s,x,z))} Q(f,g)(s,x,z) dx dz ds\notag\\&
 =  \int_0^t  \int_{\mathbb{R}^9 \times \Xi} \{  \ln{(g(s,x,z^\star))}- \ln{(g(s,x,z))}\}\notag \\&\times \sigma(|z-v|) f(s,x,v)g(s,x,z)Q(d\theta)d\phi dv  dxdzds 
%\notag\\&
%\int_0^t Q_s(f,g)(\ln(g)) ds
 \end{align}
%with $ Q_s(f,g)(\ln(g))$ defined through \eqref{OperatorQbi}.
where we have used Proposition \ref{PropAppTanaka}, Remark \ref{RemExtensionPropTanaka} and equation \eqref{gradgln}.

Along similar lines and condition {\bf C1}, we also get 
 \begin{align}\label{IWlinBEforlngf}
 % & \int_{\mathbb{R}^6} \ln{(f(t,x,z))} g(t,x,z) dxdz -
  %\int_{\mathbb{R}^6} \ln{(f(0,x,z))}  g(0,x,z) dxdz \notag \\& 
& \int_0^t \int_{\mathbb{R}^6} \ln{(f(t,x,z))}  \frac{\partial}{\partial s} g(s,x,z) dxdzds\notag\\&
= \int_0^t \int_{\mathbb{R}^6} \ln{(f(s,x,z))} Q(f,g)(s,x,z) dx dz ds\notag\\&
+  \int_0^t \int_{\mathbb{R}^6} g(s,x,z)    z \cdot \nabla_x  \ln{(f(s,x,z))} dx dz ds\notag\\&
 =  \int_0^t  \int_{\mathbb{R}^9 \times \Xi} \{  \ln{(f(s,x,z^\star))}- \ln{(f(s,x,z))}\}\notag \\&\times \sigma(|z-v|) f(s,x,v)g(s,x,z)Q(d\theta)d\phi dv  dxdzds \notag\\&
+  \int_0^t \int_{\mathbb{R}^6} g(s,x,z)    z \cdot \nabla_x  \ln{(f(s,x,z))} dx dz ds.
%\notag\\&
%\int_0^t Q_s(f,g)(\ln(f)) ds
 \end{align}
Using 
\begin{align*}
  & \int_{\mathbb{R}^6} \ln{(f(t,x,z))} g(t,x,z) dxdz -
  \int_{\mathbb{R}^6} \ln{(f(0,x,z))}  g(0,x,z) dxdz \notag \\&
=\int_0^t  \int_{\mathbb{R}^6} (g(s,x,z)\frac{\partial}{\partial s}\ln{(f(s,x,z))}  + \ln{(f(s,x,z))} \frac{\partial}{\partial s}g(s,x,z) )dxdzds
  \end{align*}
and recalling that  $\{f(t,x,z)\}_{t\in [0,T]}$ is  a  collection of densities which solves the Boltzmann equation 
\eqref{eqn1.1}, we obtain 
\begin{align}
  & \int_{\mathbb{R}^6} \ln{(f(t,x,z))} g(t,x,z) dxdz -
  \int_{\mathbb{R}^6} \ln{(f(0,x,z))}  g(0,x,z) dxdz \notag \\&
 = \int_0^t  \int_{\mathbb{R}^6 } \frac{Q(f, f) (s,x,z)} {f(s,x,z)} g(s,x,z) dxdzds \notag \\& + \int_0^t  \int_{\mathbb{R}^6 \times \mathbb{R}^3 \times \Xi} \{  \ln{(f(s,x,z^\star))}- \ln{(f(s,x,z))}\} \label{InfIto}\\&\times \sigma(|z-v|) f(s,x,v)g(s,x,z)Q(d\theta)d\phi dv  dxdzds \notag
 \end{align}
 The explicit form of the first term on the right side in \eqref{InfIto} is given by
 \begin{align}
  & 
\int_0^t  \int_{\mathbb{R}^9  \times \Xi} \{  f(s,x,z^\star) f(s,x,v^\star)- f(s,x,z)f(s,x,v)\}\notag \\&\times \sigma(|z-v|)\frac{g(s,x,z)} { f(s,x,z)}Q(d\theta)d\phi dv  dxdzds \notag
\\& = 
\int_0^t  \int_{\mathbb{R}^9  \times \Xi} \{  \frac{g(s,x,z^\star)} {f(s,x,z^\star)}- \frac{g(s,x,z)}{f(s,x,z)}\}\notag \\&\times \sigma(|z-v|)f(s,x,v)  f(s,x,z)Q(d\theta)d\phi dv  dxdzds\,, \notag
\end{align}  
where in the last equation we have used Proposition \ref{PropAppTanaka}. Equation \eqref{InfIto}  then becomes
\begin{align}\label{lnf}
  & \int_{\mathbb{R}^6} \ln{(f(t,x,z))} g(t,x,z) dxdz -
  \int_{\mathbb{R}^6} \ln{(f(0,x,z))}  g(0,x,z) dxdz \notag \\&
 = \int_0^t  \int_{\mathbb{R}^9  \times \Xi} \{  \frac{g(s,x,z^\star)} {f(s,x,z^\star)}- \frac{g(s,x,z)}{f(s,x,z)}\}\notag \\&\times \sigma(|z-v|)f(s,x,v)  f(s,x,z)Q(d\theta)d\phi dv  dxdzds\,, \notag \\&+ \int_0^t  \int_{\mathbb{R}^6 \times \mathbb{R}^3 \times \Xi} \{  \ln{(f(s,x,z^\star))}- \ln{(f(s,x,z))}\}\notag \\&\times \sigma(|z-v|) f(s,x,v)g(s,x,z)Q(d\theta)d\phi dv  dxdzds 
\end{align}  
 Combining equations \eqref{lng} with \eqref{lnf} we obtain the following equality for relative entropy
  \begin{align} 
& R_t(g|f)= \notag\\& \int_{\mathbb{R}^6} \ln{(g(t,x,z))} g(t,x,z) dxdz -
  \int_{\mathbb{R}^6} \ln{(f(t,x,z))}  g(t,x,z) dxdz \notag \\&
 = \int_0^t  \int_{\mathbb{R}^9  \times \Xi} \{  \ln{\left(\frac{g(s,x,z^\star)} {f(s,x,z^\star)}\right)}- \ln{\left(\frac{g(s,x,z)}{f(s,x,z)}\right)}\}\notag \\&\times \sigma(|z-v|)f(s,x,v)  g(s,x,z)Q(d\theta)d\phi dv  dxdzds\,, \notag\\& - \int_0^t  \int_{\mathbb{R}^9  \times \Xi} \{  \frac{g(s,x,z^\star)} {f(s,x,z^\star)}- \frac{g(s,x,z)}{f(s,x,z)}\}\notag \\&\times \sigma(|z-v|)f(s,x,v)  f(s,x,z)Q(d\theta)d\phi dv  dxdzds\,, \notag
\end{align}  
or the equivalent equation 
 \begin{align} \label{relentropy2}
& R_t(g|f)= \notag\\& 
  \int_0^t  \int_{\mathbb{R}^9  \times \Xi} \left[\frac{g(s,x,z)}{f(s,x,z)} \{ 1+ \ln{\left( \frac{g(s,x,z^\star)/f(s,x,z^\star)} {g(s,x,z)/f(s,x,z)} \right) }\}-\frac{g(s,x,z^\star)} {f(s,x,z^\star)} \right]
  \notag \\&\times \sigma(|z-v|)f(s,x,v)  f(s,x,z)Q(d\theta)d\phi dv  dxdzds \,.
  \end{align}
  Transforming  \eqref{relentropy2} in the   equivalent equation below, and recalling that for $x \ge 0$ we have $1+\ln{(x)} -x \leq 0$,   we  easily see   that $ R_t(g|f) \leq 0$:
   \begin{align} \label{relentropyineq}
& R_t(g|f)= \notag\\& 
  \int_0^t  \int_{\mathbb{R}^9  \times \Xi} \left[  1+ \ln{\left( \frac{g(s,x,z^\star)/f(s,x,z^\star)} {g(s,x,z)/f(s,x,z)} \right) }-\frac{g(s,x,z^\star)/f(s,x,z^\star)}{g(s,x,z)/f(s,x,z)} \right]
  \notag \\&\times \frac{g(s,x,z)}{f(s,x,z)} \sigma(|z-v|)f(s,x,v)  f(s,x,z)Q(d\theta)d\phi dv  dxdzds  \notag \\& \leq 0\,.
  \end{align}
  Since $R_t(g|f) \geq 0$, it follows the identity $R_t(g|f) = 0$

\end{proof}

\subsection {Invariants of the  Boltzmann equation and consequences for the bilinear form}     \label{Par Invariants BE}
%Let  $\{f(t,x,z)\}_{t \in {\R}_+^0}$ be a collection of densities on $(\R^3\times \R^3, \mathcal{B}(\R^3\times \R^3))$.

Let  $\{f(t,x,z\}_{t \in \mathbb{R}_+}$  be  a strong  solution of the Boltzmann equation in $[0,T]$. And for $t\in [0,T]$ consider 

\begin{equation} Q_t(f,f)(\psi):=
\int_{\mathbb{R}^6} f(t, x, z)  L_{f(t)}\psi(x,z) dxdz \label{OperatorQ}\end{equation} 
 defined through the right side  of equation \eqref{WBE}. \\
Using \eqref{eqn1.3}, as well as Proposition \ref{PropAppTanaka} it is easy to verify that 
\begin{equation} Q_t(f,f)(\psi)=0\quad \text{for}\quad \psi(x,z)=a+(b,z)+c |z|^2 
\label{invfunction}\end{equation}
$\, \forall \, a,c\in \mathbb{R} \,, b \in \mathbb{R}^3$. (For a proof see  Chapter II.7 \cite{Ce} or \cite{CIP}, \cite{Bressan})
As a consequence of \eqref{IWBE} and \eqref{invfunction} we obtain that the  first and second moment of a Boltzmann process is conserved, i.e. 
\begin{equation} 
\left\{
\begin{aligned}
\int_{\mathbb{R}^6} z  f (t, x, z)dxdz= \int_{\mathbb{R}^6} z   f (0, x, z)dxdz,  \\
\int_{\mathbb{R}^6} |z|^2   f (t, x, z)dxdz= \int_{\mathbb{R}^6} |z|^2   f (0, x, z)dxdz.
\end{aligned}
\right. \label{conservation moments}
\end{equation}

 Equation \eqref{invfunction} was derived by L. Boltzmann in much more generality. By 
 taking into account a second   collection of densities  $\{g(t,x,z)\}_{t \in {\R}_+^0}$ on $(\R^3\times \R^3, \mathcal{B}(\R^3\times \R^3))$, Boltzmann obtained 
 \begin{equation}  Q_t^S(f,g)(\psi)=0\quad \text{for}\quad  \psi(x,z)=a+(b,z)+c |z|^2 
\label{generalinvfunction}\end{equation}
$\forall \, a,c\in \mathbb{R} \,, b \in \mathbb{R}^3$
with the operator $Q_t^S$ defined through
\begin{equation} 
 Q_t^S(f,g)(\cdot):=Q_t(f,g)(\cdot)+Q_t(g,f)(\cdot) \label{SymmQ}\end{equation}
 with $ Q_t(f,g)(\psi)$ defined through \eqref{OperatorQbi}.
 %\begin{equation} Q_t(f,g)(\psi):=
%\int_{\mathbb{R}^6} g(t, x, z) L_f\psi(x,z) dxdz. \label{OperatorQ}\end{equation} 
 For the history of the derivation of \eqref{invfunction} or \eqref{SymmQ} and their proofs, we refer to \cite{CIP}, Paragraph {3.1}.\\

We add here the following result:

 \begin{lemma} \label{Lemmaconserved}
\begin{align}\label{consKineticEnergyLin}
&Q_t(f,\mu)(|z|^2)=\notag\\&-\int_{\mathbb{R}^9\times \Xi} (|z|^2-|v|^2)\sigma(|z-v|) \sin^2(\frac{\theta}{2})Q(d\theta)d\phi  f(t,x,v)  dv \mu_t(dx, dz)
\end{align}
\end{lemma}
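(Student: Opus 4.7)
The plan is to expand $Q_t(f,\mu)(|z|^2) = \int_{\mathbb{R}^6} L_{f(t)}(|z|^2)(x,z)\,\mu_t(dx,dz)$ directly from the definition of $L_{f(t)}$ and the explicit post-collision formula $z^\star = z + \alpha(z,v,\theta,\phi)$ from \eqref{eqn1.4}. Writing $|z^\star|^2 - |z|^2 = 2(z,\alpha) + |\alpha|^2$, the task reduces to computing the $\phi$-average of each of these two terms against the measure $\sigma(|v-z|)f(t,x,v)\,Q(d\theta)\,d\phi\,dv$.

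For the linear term I will plug in the decomposition \eqref{PrameterTrans},
\[
\alpha(z,v,\theta,\phi) = \sin^2(\tfrac{\theta}{2})(v-z) + \tfrac{\sin\theta}{2}\,\Gamma(v-z,\phi),
\]
and use \eqref{symmGamma} to eliminate the $\Gamma$ contribution, obtaining $\int_0^{2\pi}(z,\alpha)\,d\phi = 2\pi \sin^2(\tfrac{\theta}{2})(z,v-z)$. For the quadratic term I will use $\alpha = ({\bf n},v-z){\bf n}$ together with \eqref{eqscalar} to get $|\alpha|^2 = |v-z|^2\sin^2(\tfrac{\theta}{2})$, which is $\phi$-independent, so $\int_0^{2\pi}|\alpha|^2 d\phi = 2\pi|v-z|^2\sin^2(\tfrac{\theta}{2})$. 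Adding these gives
\[
\int_0^{2\pi}\{|z^\star|^2 - |z|^2\}\,d\phi = 2\pi\sin^2(\tfrac{\theta}{2})\bigl[2(z,v-z) + |v-z|^2\bigr] = 2\pi\sin^2(\tfrac{\theta}{2})(|v|^2 - |z|^2),
\]
the bracket collapsing algebraically to $|v|^2-|z|^2$.

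Re-introducing the $d\phi$ integration as $2\pi = \int_0^{2\pi}d\phi$, substituting back into $L_{f(t)}(|z|^2)$, and then integrating against $\mu_t(dx,dz)$ yields exactly \eqref{consKineticEnergyLin}, with the overall minus sign coming from $|v|^2 - |z|^2 = -(|z|^2 - |v|^2)$. The argument requires no heavy machinery; the only conceptual point is the cancellation of the $\Gamma$-term under the $\phi$-average via \eqref{symmGamma}, which is what turns $2(z,\alpha)$ into a manifestly $v,z$-symmetric quantity and permits the algebraic reduction to $(|v|^2-|z|^2)\sin^2(\tfrac{\theta}{2})$. No integrability obstacle arises here because the statement is a pointwise identity of integrals; questions of finiteness of the moments are deferred to the subsequent Theorem on moment control.
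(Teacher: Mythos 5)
Your proof is correct, but it takes a genuinely different route from the paper. The paper symmetrizes the integrand first: it adds and subtracts $|v^\star|^2-|v|^2$, invokes conservation of kinetic energy \eqref{eqn1.3} to kill the symmetric piece $|z^\star|^2+|v^\star|^2-|z|^2-|v|^2$, and arrives at $Q_t(f,\mu)(|z|^2)=\tfrac12\int\bigl[(|z^\star|^2-|z|^2)-(|v^\star|^2-|v|^2)\bigr]\cdots$. Expanding with $z^\star=z+\alpha$, $v^\star=v-\alpha$ makes the $|\alpha|^2$ contributions cancel by antisymmetry, leaving $\int(z+v,\alpha)\cdots$; only then does the paper invoke \eqref{PrameterTrans} and \eqref{symmGamma}. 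You avoid the symmetrization altogether: you keep $|z^\star|^2-|z|^2=2(z,\alpha)+|\alpha|^2$ and evaluate the quadratic term directly as $|\alpha|^2=|v-z|^2\sin^2(\tfrac{\theta}{2})$ via $\alpha=({\bf n},v-z){\bf n}$ and \eqref{eqscalar}, then observe the algebraic identity $2(z,v-z)+|v-z|^2=|v|^2-|z|^2$. The net effect is the same, but the mechanism differs: in the paper the $|\alpha|^2$ terms drop out by symmetry before any explicit formula is used, whereas in yours they are retained and absorbed by a short polarization computation. Your route is a step shorter and more self-contained; the paper's route makes the physical input (elastic conservation of kinetic energy) explicit, which is arguably the point the authors want to emphasize since the same idea is reused in the moment estimate of Theorem \ref{Th moment control}.
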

with $Q_t(f,\mu)$ defined in  \eqref{OperatorQProbmeasure}.

\begin{proof}
\begin{align} & Q_t(f,\mu)(|z|^2)= \int_{\mathbb{R}^6}   L_{f(t)} |z|^2 \mu_t(dx, dz) \label{linearizedRHSenergy}\\& =
 \int_{\mathbb{R}^9\times \Xi} (|z^\star|^2-|z|^2) \sigma(|z-v|)Q(d\theta) d\phi f(t,x,v) dv \mu_t(dx, dz). \notag
\end{align}  

Moreover, by adding and subtracting the same quantity,  we write
\begin{align} & Q_t(f,\mu)(|z|^2)=\notag\\&
 \int_{\mathbb{R}^9\times \Xi} (|z^\star|^2+ |v^\star|^2-|z|^2 -|v|^2) \sigma(|z-v|)Q(d\theta) d\phi f(t,x,v)  dv \mu_t(dx, dz) \notag\\&
-  \int_{\mathbb{R}^9\times \Xi} (|v^\star|^2-|v|^2) \sigma(|z-v|)Q(d\theta) d\phi f(t,x,v) dv \mu_t(dx, dz)\notag
 \\= & -  \int_{\mathbb{R}^9\times \Xi} (|v^\star|^2-|v|^2) \sigma(|z-v|)Q(d\theta) d\phi f(t,x,v) dv \mu_t(dx, dz).  \label{linearizedRHSenergy2}
\end{align}  
 where in the last equality we used  that the kinetic energy is conserved during the elastic collision, see \eqref{eqn1.3}.\\
 Combining equation \eqref{linearizedRHSenergy} and \eqref{linearizedRHSenergy2}, we obtain 
 \begin{align}\label{Qzsquare} & Q_t(f,\mu)(|z|^2)=\notag\\& \frac{1}{2} \int_{\mathbb{R}^9\times \Xi} (|z^\star|^2-|z|^2 -(|v^\star|^2- |v|^2) \sigma(|z-v|)Q(d\theta) d\phi f(t,x,v)  dv \mu_t(dx, dz) \notag\\& =    
 \frac{1}{2} \int_{\mathbb{R}^9\times \Xi} (|z|^2+ 2(z,\alpha)+|\alpha|^2 -|z|^2) -(|v|^2- 2(v,\alpha)+|\alpha|^2-|v|^2)\notag\\& \times  \sigma(|z-v|)Q(d\theta) d\phi f(t,x,v)  dv \mu_t(dx, dz) \notag\\& =    
 \int_{\mathbb{R}^9\times \Xi} (z+v,\alpha)  \sigma(|z-v|)Q(d\theta) d\phi f(t,x,v) dv \mu_t(dx, dz), 
  \end{align}
  Using the parametrization \eqref{PrameterTrans} for $\alpha$$=\alpha(z,v,\theta,\phi)$  and \eqref{symmGamma}  we obtain  
 \begin{align*} & Q_t(f,\mu)(|z|^2)=\notag\\& 
   \int_{\mathbb{R}^9\times \Xi} (z+v,v-z) \sin^2(\frac{\theta}{2}) \sigma(|z-v|)Q(d\theta) d\phi f(t,x,v) dv \mu_t(dx, dz)\\&=-\int_{\mathbb{R}^9\times \Xi} (|z|^2-|v|^2)\sin^2(\frac{\theta}{2})\sigma(|z-v|) Q(d\theta)d\phi  f(t,x,v)  dv \mu_t(dx, dz)
  \end{align*}
  
 \end{proof} 
The ideas of the proof of Lemma \ref{Lemmaconserved} are incorporated in obtaining moment estimates  and showing that the second moment of a  solution $\{\mu_t(dx,dz)\}_{t \in {\R}_+^0}$ of \eqref{weakbilinearBEmeasures} remains  for $ t>0$ finite, if this holds at initial time $t=0$. We obtain this result in  inequality \eqref{secondmomentmeasure} in Section 4 which is proven by \eqref{expsupZjsquare}.
We will prove this  however   first   for an approximating solution  in Theorem \ref{Th moment control} in  subsection \ref{Par Construction linear BP}. In  Theorem \ref{Th moment control}, we also show that the bound on the energy does not depend on the approximation. This will be a fundamental step to prove in Theorem \ref{Theorem MR}  the existence of  the  solution  of  the SDE  (\eqref{eq-spacer}, \eqref{eq-velr})  associated to the bilinear Boltzmann equation, i.e. the existence of the stochastic process  with time marginals  $\{\mu_t(dx,dz)\}_{t \in {\R}_+^0}$ satisfying  \eqref{weakbilinearBEmeasures}.
%We won't use  further explicitely the sta more complicated computions in the proof of the fundamental  Theorem \ref{Th moment control} in  Paragraph \ref{Par Construction linear BP} of  Section \ref{Sec Existence Boltzmann process}   are similar and motivated by this Lemma. Theorem \ref{Th moment control} states in particular that the second moment of the  solution of  the SDE associated to the bilinear Boltzmann equation is finite.\\

\section { Boltzmann processes} \label{Sec Boltzmann process}
In this Section we construct   a McKean-Vlasov SDE associated to  
 the Boltzmann equation \eqref{IWBE} and characterize Boltzmann 
processes.\\

    We use the following notation: 
    \[
    U_0 =  \R^3 \times [0,\pi)\times (0,2\pi].\]

    Let  $\{f(t,x,v)\}_{t \in {\R}_+^0}$ be a collection of densities on $(\R^3\times \R^3, \mathcal{B}(\R^3\times \R^3))$. 
    Then  $m(t, v)$ denotes the marginal density of velocity $v$ at time $t$, i.e. $m(t,v):= \int_{\R^3} f(t,x,v) dx$  so that
    $f(t,x |v){m(t,v)}$$:={f(t,x,v)}$ upon disintegration of measures. \\
    We make the following 

\smallskip
\noindent
{\bf{Hypotheses B:}}
 We assume that  $t \,\to\, f(t,x,v)$ 
is differentiable for each $x,v$ $\in \R^3$ fixed, and satisfies 
\begin{itemize}
\item[\bf{B0.}]
\[|\frac {\partial f}{\partial t} |\,\, \text{is bounded on any  compact set  of}\,\, \R_+^0\times \R^6,\]
\item[\bf{B1.}]
\[\frac {\partial f}{\partial t}(t,\cdot) \in L^1(\R^6),\quad \forall t\in \R_+^0,\]
\item[\bf{B2.}] 
\[
\sup_{ x \in \R^3 } \int_{\R^3} |z|^{1+\gamma} f(s,x,z)dz \in C([0, T]) \quad \forall T>0. 
\]
\item[\bf{B3.}]
\[
\sup_{ x \in \R^3 } \int_{\R^3} |z|^{1+\gamma} \frac {\partial }{\partial t}  f(t,x,z) dz \in L^1([0, T]) \quad \forall T>0.
\]
\end{itemize}
%where  we used the following notation $\R_+^0:=\{t\in \R: t\geq 0\}$.
    
\begin{theorem}\label{Ito}      Let  $\{f(t,x,z)\}_{t \in {\R}_+^0}$ be any collection of densities which satisfies  hypothesis {\bf {B0}} - {\bf {B3}}. 
Suppose hypothesis {\bf A} holds.  Let $X_0$ and $Z_0$ be $\mathbb{R}^3$- valued random variables. 
Suppose that for  fixed $T>0$ there exists a stochastic basis $(\Omega,\mathscr{F},(\mathscr{F}_t)_{t\in [0,T]},\mathrm{P})$,   
an adapted process $(X_t,Z_t)_{t\in [0,T]}$ with values on  $\mathbb{D}\times \mathbb{D}$, 
which has  time marginals with density $f(t,x,z)$,   and satisfies  a.s. the following stochastic equation for $t \in [0,T]$:
 % \begin{equation} 
%\left\{
\begin{align}
&X_t = X_0+\int_0^t Z_s ds \label{eq-spaceB}   \\
 &Z_t = Z_0 + \int_0^t \int_{U_0\times \R^+_0}\alpha(Z_{s_-}, v, \theta, \phi)1_{[0,\; \s(|Z_{s_-} - v|)f(s,X_s|v)]}(r) 
 d {N}  \label{eq-velB}
\end{align}
%\right. 
%\end{equation}
  % \begin{equation}
%Z_t =  Z_0 + \int_0^t \int_{ U_0 \times \R_+^0} \alpha(Z_s, v_s, \theta, \phi)1_{[0,\; \sigma(|Z_{s} - v_s|)f(s, X_s | v_s) ]}(r) 
 %  d{N} \label{eq-velB}
   % \end{equation}
   % and 
  %  \begin{equation}
   %  X_t = X_0 + \int_0^t Z_s ds.
   % \end{equation}
    where in the above equation,  $d{N}:= {N}(dv,d\theta, d\phi,dr,ds)$ is a  Poisson random measure (cPrm)  
    with compensator $m(s,v) dv Q(d\theta)d\phi  ds dr $.  
    Then  $\{f(t,x,z)\}_{t \in {\R}_+^0}$ is a weak solution of the Boltzmann equation in $[0,T]$ and hence  $(X_t,Z_t)_{t\in [0,T]}$ is a Boltzmann process. 
   \end {theorem}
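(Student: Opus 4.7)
My strategy is to apply It\^o's formula to $\psi(X_t, Z_t)$ for a test function $\psi \in C_0^2(\mathbb{R}^6)$, take expectation, and convert the identity into the weak form \eqref{IWBE} using that the time marginal of $(X_t, Z_t)$ has density $f(t, x, z)$.

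First, I observe that $X_t$ is absolutely continuous with derivative $Z_s$, while $Z_t$ is a pure-jump semimartingale driven by the Poisson random measure $N$. It\^o's formula therefore yields
\begin{align*}
\psi(X_t, Z_t) &= \psi(X_0, Z_0) + \int_0^t (Z_s, \nabla_x \psi(X_s, Z_s))\, ds \\
&\quad + \int_0^t\!\!\int_{U_0 \times \R_+}\!\! \bigl[\psi(X_s, Z_{s^-} + \alpha(Z_{s^-}, v, \theta,\phi)) - \psi(X_s, Z_{s^-})\bigr]\,\mathbf{1}_{[0,\sigma(|Z_{s^-}-v|)f(s,X_s|v)]}(r)\, dN.
\end{align*}
By \eqref{eqn1.4}, on the support of the indicator the post-jump velocity $Z_{s^-} + \alpha(Z_{s^-}, v, \theta, \phi)$ equals precisely $z^\star$.

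Next, I split the Poisson integral into its compensated martingale part and the compensator contribution with intensity $m(s,v)\,dv\,Q(d\theta)\,d\phi\,dr\,ds$. Integrating $r$ out collapses the indicator into the factor $\sigma(|Z_{s^-}-v|)f(s, X_s|v)$, and using the disintegration identity $f(s, x|v)\,m(s, v) = f(s, x, v)$ shows that the compensator integrand equals $L_{f(s)}\psi(X_s, Z_s)$. Taking expectation (the martingale term vanishes) and then using the density property of $(X_s, Z_s)$ to replace expectations by spatial integrals against $f(s, x, z)\,dx\,dz$, I obtain
\begin{align*}
\int_{\R^6}\psi(x, z) f(t, x, z)\, dx\, dz &= \int_{\R^6} \psi(x, z) f(0, x, z)\, dx\, dz\\
&\quad + \int_0^t\!\!\int_{\R^6}(z, \nabla_x \psi(x, z)) f(s, x, z)\, dx\, dz\, ds\\
&\quad + \int_0^t\!\!\int_{\R^6} L_{f(s)}\psi(x, z)\, f(s, x, z)\, dx\, dz\, ds,
\end{align*}
which is exactly \eqref{IWBE}. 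Hence $f$ is a weak solution of the Boltzmann equation, and by Definition \ref{Denskog}, $(X_t, Z_t)_{t \in [0, T]}$ is a Boltzmann process.

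The main technical obstacle is justifying the cancellation of the martingale part and the Fubini exchange in the compensator term, given that $Q$ may have a singularity at $\theta = 0$. I would handle this by decomposing $(0, \pi] = (0, \epsilon] \cup (\epsilon, \pi]$: on the non-singular piece $Q$ is finite by \textbf{A1}, and the compact support of $\psi$ together with the growth estimate \eqref{growth} and the velocity moment bound \textbf{B2} yield absolute integrability of the integrand. On $(0, \epsilon]$ I Taylor-expand the jump, so $|\psi(X_s, Z_{s^-}+\alpha) - \psi(X_s, Z_{s^-})| \leq \|\nabla \psi\|_\infty |\alpha| \leq 2\|\nabla\psi\|_\infty \theta (|Z_{s^-}|+|v|)$ by \eqref{growthzalpha}; combined with $\int_0^\epsilon \theta\, Q(d\theta) < \infty$ from \textbf{A1}, the factor $\sigma(|Z_{s^-}-v|) \leq c(|Z_{s^-}|^\gamma + |v|^\gamma)$ from \textbf{A2}, and the moment bound \textbf{B2}, the dominated convergence hypothesis required for the cancellation and for Fubini is satisfied uniformly on compact time intervals (hypotheses \textbf{B0}, \textbf{B1}, \textbf{B3} additionally ensure the relevant integrals are well-behaved in $s$). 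These steps are somewhat delicate but essentially standard once the decomposition is in place.
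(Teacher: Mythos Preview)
Your argument is correct and in fact more direct than the paper's. Both proofs begin the same way: verify the $L^1$-integrability of $|\hat\alpha(Z_s,v,\theta,\phi)|f(s,X_s,v)$ against $dv\,Q(d\theta)\,d\phi\,ds$ using \eqref{growth} together with {\bf B2} and the assumption that $(X_s,Z_s)$ has density $f$, and then apply the It\^o formula. From there, however, the paper takes a detour: it writes the It\^o expansion on $[t,t+\Delta t]$, takes expectation, divides by $\Delta t$, and passes to the limit $\Delta t\to 0$ to recover the \emph{differential} weak form \eqref{WBE}. This limit requires showing that the functions $s\mapsto \int (u,\nabla_x\psi)f\,dx\,du$ and $s\mapsto \int\{\psi(x,u+\alpha)-\psi(x,u)\}\sigma f f\,dv\,Q(d\theta)\,d\phi\,dx\,du$ are continuous in $s$, and it is precisely for this continuity that the paper invokes {\bf B0}, {\bf B1}, and {\bf B3}.

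You instead apply It\^o on $[0,t]$, take expectation once, and land directly on the \emph{integral} weak form \eqref{IWBE}, which is what Definition~\ref{solBE} actually asks for. This bypasses the $\Delta t\to 0$ step entirely, and your integrability justification (the $\theta$-splitting combined with \eqref{growthzalpha} and {\bf A1}, or equivalently the paper's first displayed estimate) is enough to ensure the compensated Poisson integral is a genuine mean-zero martingale. The upshot is that your route shows {\bf B2} alone (plus {\bf A}) suffices for the conclusion; the paper's use of {\bf B0}, {\bf B1}, {\bf B3} is an artifact of aiming for \eqref{WBE} rather than \eqref{IWBE}. Your final paragraph is slightly loose in attributing a role to {\bf B0}, {\bf B1}, {\bf B3}---in your argument they are not actually needed---but this does not affect correctness.
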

 \begin {proof} 
    From \eqref{growth} it follows  for each $T>0$ 
    \begin{align*}
     & \int_0^T  \mathbb{E}[ \int_{ U_0\times \R_+^0}  
    | \alpha(Z_s, v, \theta, \phi) | 1_{[0,\; \sigma(|Z_{s} - v|)f(s, X_s  | v) ]}(r) m(s,v) dv Q(d\theta)d\phi dr] ds \\& 
     =\int_0^T  \mathbb{E}[ \int_{ U_0}  
    | \hat{\alpha}(Z_s, v, \theta, \phi) |  f(s,X_s,v) dv Q(d\theta)d\phi ] ds\\& 
     \leq  C \int_0^T \int_{\R^9} (|z|^{1+\gamma}+|v|^{1+\gamma}) f(s,x,z) f(s,x,v) dx dz dv ds,
     \\&   \leq 2 C \int_0^T \sup_{x\in \R^3}  \int_{\R^6} \left(  |z|^{1+\gamma} f(s,x,z)  dz \right)f(s,x,v) dv dx  ds <\infty.
      \end{align*} 
      for some constant $C>0$.
      In the above estimates we used that  the function  $f$ is also the density of  the process $(Z, X)$, 
      as well the assumption  { \bf A2} and  { \bf B2}.
It follows that we can  apply the It\^o formula to $(X_s,Z_s)_{s \in \mathbb{R}_+}$ \cite{RZ}. 
 In fact let $t$, $\Delta t > 0$,  $\psi \in C_0^2(\R^3 \times \R^3)$,  then 
\begin{align*}
 & \psi(X_{t+ \Delta t},Z_{t+ \Delta t})\nonumber \\
 &= \psi(X_t,Z_t) +  \int_t^{t+ \Delta t} (Z_s,  \nabla_x \psi (X_s, Z_s)) ds \nonumber\\
 &+ \int_t^{t+\Delta t} \int_{ U_0\times \R^+_0} 
 \{\psi (X_s, Z_s + \alpha(Z_s,v,\theta,\phi)1_{[0,\; \s(|Z_{s} - v|)f(s,X_s|v)]}(r)) -\psi (X_s, Z_s)  \nonumber 
 \\ & -\nabla_z\psi (X_s,Z_s) \alpha(Z_s, v, \theta, \phi)1_{[0,\; \s(|Z_{s} - v|)f(s,X_s| v)]}(r)\} m(s,v) dv  Q(d\theta) d\phi ds dr\, +\nonumber\\
 &\int_t^{t+\Delta t} \int_{ U_0}   ( \alpha(Z_s, v, \theta, \phi), \nabla_z\psi (X_s,Z_s))\s(|Z_s - v|)f(s,X_s,v) 
  dv Q(d\theta) d\phi ds
 \nonumber\\
 & + M^{t+ \Delta t}_t(\psi) )\end{align*}
 \begin{align}\label{eqIto}
 &= \psi(X_t,Z_t) +  \int_t^{t+ \Delta t} (Z_s,  \nabla_x \psi (X_s, Z_s)) ds \nonumber\\
 &+ \int_t^{t+\Delta t} \int_{ U_0} \{\psi (X_s, Z_s + \alpha(Z_s,v,\theta,\phi)) -\psi (X_s, Z_s)  \nonumber 
 \\ & -\nabla_z\psi (X_s,Z_s) \alpha(Z_s, v, \theta, \phi)\} \s(|Z_{s} - v|)f(s,X_s| v) m(s,v) dv Q(d\theta) d\phi ds  \,+\nonumber\\
 &\int_t^{t+\Delta t} \int_{ U_0}   ( \alpha(Z_s, v, \theta, \phi), \nabla_z\psi (X_s,Z_s))\s(|Z_s - v|)f(s,X_s,v) 
  dv Q(d\theta) d\phi ds
 \nonumber\\
 & + M^{t+ \Delta t}_t(\psi)
 \nonumber \\
  &= \psi(X_t,Z_t) +  \int_t^{t+ \Delta t} (Z_s,  \nabla_x \psi (X_s, Z_s)) ds \nonumber\\
  &+ \int_t^{t+\Delta t} \int_{ U_0} \{\psi (X_s, Z_s + \alpha(Z_s,v,\theta, \phi)) -\psi (X_s, Z_s)  \} \s(|Z_{s} - v|)f(s,X_s,v) dv Q(d\theta) d\phi ds \\
 & + M^{t+ \Delta t}_t(\psi)
  \nonumber
 \end{align}
 where $\{M^{t+ \Delta t}_t(\psi)\}_{\Delta t \in (0,T]}\}$ is a martingale, for each $T\in \mathbb{R}$. It follows 
  \begin{align}
   & \mathbb{E}[\psi(X_{t+ \Delta t},Z_{t+ \Delta t})- \psi(X_t,Z_t)]=   
   \nonumber\\ &  \mathbb{E}\left[ \int_t^{t+ \Delta t} (Z_s,  \nabla_x \psi (X_s, Z_s)) ds \right] \nonumber\\
     &+  \mathbb{E} \left[\int_t^{t+\Delta t} \int_{ U_0} \{\psi (X_s, Z_s + \alpha(Z_s,v,\theta, \phi)) -\psi (X_s, Z_s)  \} \s(|Z_{s} - v|)f(s,X_s,v) dv Q(d\theta) d\phi ds \right]\nonumber
    \end{align}
 Upon dividing  by $\Delta t$ on both sides, we obtain 
  \begin{align}
  &\lim_{\Delta t \downarrow 0} \frac{1}{\Delta t}\int_{\R^6 }\psi(x,u) \{f(t+\Delta t,x,u) -f(t,x,u)\}dx du
  \nonumber \\&= \lim_{\Delta t \downarrow 0} \frac{1}{\Delta t} \int_t^{t+\Delta t} \int_{\R^6 } (u,  \nabla_x \psi (x, u)) f(s,x,u) dx du ds \,+
  \nonumber \\&
   \lim_{\Delta t \downarrow 0} \frac{1}{\Delta t} \int_t^{t+\Delta t} 
   \int_{{\R^6 \times \R^3 \times [0,\pi)\times (0,2\pi]}}\{\psi(x,u+\alpha(u,v,\theta,\phi))-\psi(x,u)\}
  \nonumber  \\&\times \s(|u-v|)f(s,x,v) f(s,x,u)  dv Q(d\theta) d\phi  dx du  ds \label{Itolimt}
  \end{align}
 
 Letting $\Delta t \to 0$ in every term of \eqref{Itolimt} we obtain \eqref{WBE} as shown below.\\
 Let us consider the first term on the right side of \eqref{Itolimt}. By hypothesis  {\bf B}, 
$f$  is bounded by a constant $M$  on the compact set $[t,t+ \epsilon]\times K$, where $K$ is the compact support of $\psi$, 
 and $\epsilon >0$ is fixed. It follows that on $[t,t+ \epsilon]\times K$ 
 \begin{equation*}
  |(u,\nabla_x \psi(x,u)|f(s,x,u)\leq M |u| |\nabla_x \psi(x,u)|.
 \end{equation*}
 Since $ |(u,\nabla_x \psi(x,u)|$$= |(u,\nabla_x \psi(x,u)|1_K(x,u)$ is an integrable  function over $\R^6$,  the Lebesgue dominated convergence theorem implies that 
 \begin{equation*}
  h(s):= \int_{\R^6 } (u,  \nabla_x \psi (x, u)) f(s,x,u) dx du 
 \end{equation*}
 is a continuous function in $[t,t+ \epsilon]$, because $s\to f(s,x,u)$ is continuous. It follows that 
  \begin{equation*}
   \lim_{\Delta t \downarrow 0} \frac{1}{\Delta t} \int_t^{t+\Delta t} h(s) ds=h(t).
  \end{equation*}
\\
 Let us consider the second term on the right side of \eqref{Itolimt} and prove the continuity of the function
 \begin{eqnarray*} & g(s):=
 \int_{{\R^6 \times \R^3 \times [0,\pi)\times (0,2\pi]}}\{\psi(x,u+\alpha(u,v,\theta,\phi))-\psi(x,u)\}
 \\& \times\s(|u-v|)f(s,x,v) f(s,x,u)  dv Q(d\theta) d\phi  dx du 
  \end{eqnarray*}
Since 
\begin{equation*}
|\{\psi(x,u+\alpha(u,v,\theta,\phi))-\psi(x,u)\}| \le \sup
 _{\{\eta \in K\}} |\nabla_u \psi(x,\eta)| |\alpha(u,v,\theta,\phi)|
\end{equation*} 
 and 
\begin{equation*}
 |\alpha(u,v,\theta,\phi)|\s(|u-v|)\leq |u-v|^{1+\gamma} |\sin(\frac{\theta}{2})|,
\end{equation*} 
  it follows that  
\begin{eqnarray}
 &|g(s)-g(s_0)|\leq C   \int_{ K \times \R^6}  |f(s,x,u) f(s,x,v)-f(s_0,x,u) f(s_0,x,v)|  \nonumber  \\& \times 
(|u|^{\gamma+1}+|v|^{\gamma +1}) dx du dv,  \label{eqg} 
\end{eqnarray}
with  
\begin{equation*} C:= \|\nabla_u\psi\|_\infty 2  \pi \int_0^\pi \theta Q(d\theta)
\end{equation*}
We split it into two terms, one with $|u|^{\gamma+1}$ (resp. $|v|^{\gamma+1}$), and get 
\begin{eqnarray}
 &\!\!\!\!C \int_{ K \times \R^6} 
 |f(s,x,u) f(s,x,v)-f(s_0,x,u) f(s_0,x,v)||u|^{\gamma+1} dx du dv  \nonumber \\&
 \!\!\!\!\!\!\!\!\!\!\!\!\!\!\!\!= C \int_{ K \times \R^6} 
 |f(s,x,u) f(s,x,v)-f(s,x,u) f(s_0,x,v)||u|^{\gamma+1} dx du dv \notag\\
 &+\,
 C \int_{ K \times \R^6} 
 |f(s,x,u) f(s_0,x,v)-f(s_0,x,u) f(s_0,x,v)||u|^{\gamma+1} dx du dv\,\,\,\,\,\,\,\label {splitwithC}\\&
 =J_1(s)+J_2(s) \nonumber
 \end{eqnarray}
  \begin{eqnarray*}
 & J_1(s)=C\int_{ K \times \R^6} |u|^{\gamma+1}  f(s,x,u) |\int_{s_0}^s \frac{\partial f}{\partial r}(r,x,v)dr |dx du dv
  \\&\leq
  C \left( \sup_{x \in \R^3} \int_{\R^3} |u|^{\gamma+1} f(s,x,u)  du \right) \int_{s_0}^s  \int_{\R^6}|\frac{\partial f}{\partial r}(r,x,v)|dxdvdr
  \end{eqnarray*}
By {\bf B1} and {\bf B2} $\lim_{s\to s_0} J_1(s)=0$.\\
Let us consider $J_2(s)$.
 \begin{eqnarray*}
 & J_2(s)=C\int_{ K \times \R^6} |u|^{\gamma+1}  f(s_0,x,v) |\int_{s_0}^s \frac{\partial f}{\partial r}(r,x,u)dr |dx du dv
  \\&\leq C\int_{s_0}^s \int_{ K \times \R^3} \left( \int_{\R^3}|u|^{\gamma+1} |\frac{\partial f}{\partial r}(r,x,u)|du\right) f(s_0,x,v)  dxdvdr
  \end{eqnarray*}
  Since $\sup_{x\in \R^3}\int_{R^3}|u|^{\gamma+1} |\frac{\partial f}{\partial r}(r,x,u)|du$ is integrable in $[s_0,s]$ by {\bf B3}, 
  we obtain $\lim_{s\to s_0} J_2(s)=0$.\\
  Likewise, and without any changes in the arguments, it follows
  \begin{equation*}
   \lim_{s\to s_0} C \int_{ F \times \R^6} 
 |f(s,x,u) f(s,x,v)-f(s_0,x,u) f(s_0,x,v)||v|^{\gamma+1} dx du dv = 0
  \end{equation*}
Hence $\lim_{s\to s_0} g(s)= g(s_0)$, so that $g$ is a continuous function and 
\begin{equation*}
   \lim_{\Delta t \downarrow 0} \frac{1}{\Delta t} \int_t^{t+\Delta t} g(s) ds=g(t).
  \end{equation*}
  Note that in the above arguments we have taken $s>s_0$ for simplicity. One may also tke $s_0>s$.
 \end {proof}
  Theorem \ref{Ito} motivates the following definition.
  \begin{dfn}
    Let $\{f(t,x,v)\}_{t \in \R_+^0}$ be {\color{blue} {any}}  collection of densities satisfying Hypothesis {\bf {B}}. 
      Suppose that for  any fixed $T>0$, there exists a stochastic basis 
      $(\Omega,\mathscr{F},(\mathscr{F}_t)_{t\in [0,T]},\mathrm{P})$ and   an adapted process $(X_t,Z_t)_{t\in [0,T]}$ with values on  
      $\mathbb{D}\times \mathbb{D}$ such that
      \begin{itemize}
   \item[\bf{i)}] $(X_t,Z_t)_{t\in [0,T]}$  has  time marginals with density $f(t,x,u)$, for $t \in [0,T]$,
   \item[\bf{ii)}] $(X_t,Z_t)_{t\in [0,T]}$  is a weak solution of  the 
      McKean -Vlasov SDE (\eqref{eq-spaceB} \eqref{eq-velB}).
   \end{itemize} 
      Then we say that the    
      McKean -Vlasov equation  (\eqref{eq-spaceB}, \eqref{eq-velB}) with density functions 
$\{f(t,x,v)\}_{t \in \R}$ 
      is associated to the  the 
      Boltzmann equation'' \eqref{eqn1.1}.\\
      If the above property holds for $T\in [0,S]$ , with $S>0$ then  the    
      McKean -Vlasov SDE (\eqref{eq-spaceB}, \eqref{eq-velB})  with density functions $\{f(t,x,v)\}_{t \in [0,S]}$ 
      is associated to the 
      Boltzmann equation \eqref{eqn1.1} up to time $S$.
       \end{dfn}
      
      \begin{Remark}
       Let us assume hypothesis {\bf A}. 
       From  Theorem \ref{Ito} it follows that any stochastic process $(X_t,Z_t)_{t\in [0,T]}$    
    solving a  McKean -Vlasov equation (\eqref{eq-spaceB}, \eqref{eq-velB})   ``associated to the 
      Boltzmann equation \eqref{WBE} is (according to Definition \ref{Denskog}) a Boltzmann process.   
      The Boltzmann equation  \eqref{WBE} is hence  the Fokker-Planck  equation associated to the McKean -Vlasov equation  (\eqref{eq-spaceB}, \eqref{eq-velB}).
      \end{Remark}

 \section{Existence of  Boltzmann processes} \label{Sec Existence Boltzmann process}
 In this Section we assume that  for $B(z, dv, d\theta)$, defined in \eqref{eqn1.8}, the following Hypotheses {\bf A$^{\prime}$} hold:
  \begin{itemize}
   \item[\bf{A1.}] The measure $Q$ is finite outside any neighbourhood of $0$, and for all $\epsilon > 0$, it satisfies 
   \[
    \int_0^{\epsilon} \theta Q(d\theta) < \infty.
   \]
   \item[\bf{A2$^{\prime}$.}] $\sigma: \R_0^+ \to \R_0^+$ (entering \eqref{eqn1.8}) is given by $\sigma(z):=c |z|^{\gamma}$, with $c > 0$, $\gamma =1$ fixed.
   \end{itemize}
Sometimes, we write $\sigma(z, v)$ to denote $\sigma(|z - v|)$. Assuming Hypotheses {\bf A$^{\prime}$}, we prove the existence of Boltzmann processes for the (non-cut-off) case for hard spheres. The case of molecules with soft or hard potentials will be treated in a separate article.\\

 In Theorem \ref{Ito} we proved that any process $(X_t,Z_t)_{t\in [0,T]}$ with time marginal densities $\{f(t,x,z)\}_{t \in [0,T]}$, solving  
  the McKean-Vlasov equation  (\eqref{eq-spaceB}, \eqref{eq-velB}) 
 %associated to \eqref{WBE} 
  is a Boltzmann process.\\
The main theorem of  this section  is Theorem \ref{mainThexistenceBoltzmann} in subsection \ref{Par Construction BP}, where, 
given a 
strong  solution $\{f(t,x,z)\}_{t \in [0,T]}$ 
      of the Boltzmann equation  \eqref{eqn1.1}, we find sufficient conditions for the  existence of a solution of the   McKean -Vlasov equation (\eqref{eq-spaceB}, \eqref{eq-velB}) with marginal densities   $\{f(t,x,z)\}_{t \in [0,T]}$. The solution process $\{X_t,Z_t\}_{t\in [0,T]}$ is then a  Boltzmann process. \\
Theorem \ref{thm-uniqueness IWlinBE-given-f} suggests that  it would be convenient  to first prove the existence of a solution $\{X_t,Z_t\}_{t\in [0,T]}$ of a stochastic differential equation 
%with the r.h.s. defined through $\{f(t,x,z)\}_{t \in [0,T]}$ as in
 %(\eqref{eq-spaceB}, \eqref{eq-velB}),
 associated to the bilinear Boltzmann equation defined in Definition \ref{DeflinBE}. Indeed, this is established in  Theorem \ref{ThCor MR}. In subsection \ref{Par Construction linear BP} we assume  only  that $\{f(t,x,z)\}_{t \in [0,T]}$, appearing on the right side of the SDE (\eqref{eq-spacer}, \eqref{eq-velr}), is any collection of probability densities satisfying some regularity conditions listed in the Assumptions B4 -B6 and prove the existence of a weak solution  having finite second moment (see \eqref{finitesecondmomentsol}).  \\
In subsection \ref{Par Construction BP}  we add the regularity conditions {\bf B0} - {\bf B3} and the  assumption  that $\{f(t,x,z)\}_{t \in [0,T]}$ solves the  Boltzmann equation \eqref{eqn1.1}. We state in Theorem  \ref{ThSDEbilinearBE} that in this case  the SDE  (\eqref{eq-spacer}, \eqref{eq-velr})  is associated to the bilinear Boltzmann equation
   and show in Theorem \ref{thm-existence-given-f}  that,  if the time marginals of the solution $\{X_t,Z_t\}_{t\in [0,T]}$  have  a density, then the densities  coincide with   $\{f(t,x,z)\}_{t \in [0,T]}$.  The proof relies on the same symmetry and entropy   arguments used  in Theorem \ref{thm-uniqueness IWlinBE-given-f}. As a consequence    the SDE  (\eqref{eq-spacer}, \eqref{eq-velr}) coincides  with the McKean -Vlasov SDE  (\eqref{eq-spaceB}, \eqref{eq-velB}) and  its  solution   $\{X_t,Z_t\}_{t\in [0,T]}$ is  a  Boltzmann process. This is stated in Theorem \ref{mainThexistenceBoltzmann}.\\
      
    %   Assume hypothesis  {\bf A}. Let  $T>0$, and  $\{f(t,x,z)\}_{t\in [0,T]}$ be  a  collection of densities, 
     %  which satisfy Hypothesis {\bf {B}}. 
    %   For each $t\in [0, T]$, and by conditioning, we write 
%\[
%f(t, x, z) = f(t, x |  v)m(t, z), \quad \text{and} \quad  f(t, x, z) = f(t, z | x) l(t, x)\]
% where $m(t, z)$ (resp. $l(t,x)$) denotes the marginal density of $z\in \R^3$  (resp. $x\in \R^3$)  at time $t$. 

 \subsection{ Construction of a solution of a  SDE associated to the bilinear Boltzmann equation} \label{Par Construction linear BP}
 
 In this paragraph  we assume that  $\{f(t,x,z)\}_{t\in [0,T]}$ is  any  collection of densities 
which 
% solves the Boltzmann equation \eqref{eqn1.1} and  
 satisfies 
%hypothesis {\bf {B}} as well as 
the following  conditions:\\

{\bf B4.} 
\[
\sup_{s\in [0, T], x \in \R^3} \int_{\R^3} f(s,x,v) dv \leq C_T < \infty.
\]\\
{\bf B5.} For every $T>0$ and $K>0$, there exists a constant $C_T^K>0$ such that  
\[
\sup_{s\in [0, T],| x | \leq K } \int_{\R^3} max(1,|v|^2%{1+\gamma}
) | {\nabla_x} f(s,x,v)| dv \leq C_T^K < \infty.
\] \\
{\bf B6.}
\[
\sup_{s\in [0, T], x \in \R^3} \int_{\R^3}|v|^3%{\gamma+2}
 f(s,x,v) dv  \leq c_T < \infty.
\]\\
%{\bf B5.} 
%\[
%\sup_{s\in [0, T], x \in \R^3} \int_{\R^3}|v|^{\gamma+1} f(s,x,v) dv  \leq \mathcal{C}_T < \infty.
%\]}\\
 %{\bf B6.} \[\sup_{s\in [0, T], x \in \R^3} \int_{\R^3}|v|^{\gamma} f(s,x,v) dv  \leq \mathbb{C}_T < \infty.\]}\\
 Obviously, condition {\bf B6} implies that   
\begin{equation*}
\sup_{s\in [0, T], x \in \R^3} \int_{\R^3}|v|^2%{\gamma+1}
 f(s,x,v) dv  \leq \mathbf{C}_T < \infty.
\end{equation*}
for some positive constant $ \mathbf{C}_T$, which sometimes  will be used in the calculations as well.\\

%\noindent  We assume also  $\gamma =1$  and  that hypothesis {\bf{A}} is  satisfied. \\

% Let  $T>0$ and $\{f(t,x,v)\}_{t \in [0,T]}$ be a collection of  densities which satisfy $f(t,x,u)\in C([0,T]\times \R^6)$ and  hypothesis   {\bf{B2}}, {\bf{B3}}, {\bf{B4}}, {\bf{B5}}. \\
Given a  stochastic basis $(\Omega,\mathscr{F},(\mathscr{F}_t)_{t\in [0,T]},\mathrm{P})$ which supports  an  adapted Poisson noise ${N}(dv,d\theta, d\phi,dr,ds)$   with compensator $m(s,v) dv $$Q(d\theta)d\phi drds$ (where $m(s, v) = \int_{\mathbb{R}^3} f(s, x, v)dx$) and an $\mathcal{F}_0$ -measurable   random vector  $(X_0,Z_0)$  with values on $ \mathbb{R}^3 \times \mathbb{R}^3$, such that
\begin{equation}
\mathbb{E}[|Z_0|^2]<\infty,\quad  
\mathbb{E}[|X_0|^2]<\infty \,,\label{first moment initial}
\end{equation}
our aim is to find a weak solution  of  the following SDE:

     \begin {align}
X_t& = X_0+\int_0^t Z_s ds,\quad \forall t\in[0,T]  \, \label{eq-spacer}\\
 Z_t &= Z_0 + \int_0^t \int_{U_0\times \R^+_0}\alpha(Z_{s_-}, v, \theta, \phi)1_{[0,\; \s(|Z_{s_-} - v|)f(s,X_s|v)]}(r) 
 d {N} \quad \forall t\in[0,T]\, \label{eq-velr}
\end{align}

\noindent  We remark that the above stochastic equation (\eqref{eq-spacer}, \eqref{eq-velr}) is NOT assumed to be  of McKean -Vlasov type.
The following theorem explains why we are interested in finding its solution  and proving that it has  finite second moment for all $t \in [0, T)$.
\begin{theorem}\label{ThSDEbilinearBE}  
%Let  $\gamma=1$  and  hypotheses {\bf{A}} be satisfied. 
 Let  $T>0$ and $\{f(t,x,v)\}_{t \in [0,T]}$ be a collection of  densities which solves the Boltzmann equation  \eqref{eqn1.1}  and satisfies $f(t,x,v)\in C([0,T]\times \R^6)$ and  hypotheses  
 %{\bf{B0}} -{\bf{B3}}. 
 {\bf{B2}}.\\
Suppose that for  fixed $T>0$ there exists a stochastic basis $(\Omega,\mathscr{F},(\mathscr{F}_t)_{t\in [0,T]},\mathrm{P})$,   
an adapted process $(X_t,Z_t)_{t\in [0,T]}$ with values on  $\mathbb{D}\times \mathbb{D}$, 
which satisfies  (\eqref{eq-spacer}, \eqref{eq-velr}) almost surely.  Suppose  that 
$\sup_{t\in[0,T]} \int_{\mathbb{R}^3} |z|^2 \mu_t(dx,dz) < \infty$ where 
$\mu_t$  denotes the distribution of $(X_t, Z_t)$ for each $t \in [0,T]$. Then $\{\mu_t\}_{t \in [0,T]}$ is a weak  solution of the bilinear Boltzmann equation \eqref{weakbilinearBEmeasures}  in $[0,T]$.
\end{theorem}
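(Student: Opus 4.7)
The plan is to mirror the strategy of Theorem \ref{Ito}: apply It\^o's formula to $\psi(X_t,Z_t)$ for a test function $\psi \in C^2_0(\mathbb{R}^6)$, take expectation to eliminate the martingale term, and then recognize the resulting identity as the weak form \eqref{weakbilinearBEmeasures} by writing expectations as integrals against $\mu_t$. Crucially, unlike in Theorem \ref{Ito}, the time marginals of $(X_t,Z_t)$ are \emph{not} assumed to coincide with $f$, so the integration against $f$ will come only from the compensator, while the integration against the law of $(X_s,Z_s)$ produces $\mu_s$.

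First I would verify the integrability needed to apply It\^o's formula. Using estimate \eqref{growth} together with the identity $f(s,X_s|v)m(s,v)=f(s,X_s,v)$, the compensator yields
\begin{align*}
\int_0^T \mathbb{E}\!\left[\int_{U_0} |\alpha(Z_s,v,\theta,\phi)|\,\sigma(|Z_s-v|) f(s,X_s,v) dv Q(d\theta)d\phi\right] ds
\leq C \int_0^T \mathbb{E}\!\left[\int_{\mathbb{R}^3} (|Z_s|^{1+\gamma}+|v|^{1+\gamma}) f(s,X_s,v) dv \right] ds,
\end{align*}
which is finite by hypothesis \textbf{B2} ($\gamma=1$ under \textbf{A}$'$) and the assumed bound $\sup_{t\in[0,T]}\int |z|^2 \mu_t(dx,dz)<\infty$ (which controls $\mathbb{E}[|Z_s|^2]$). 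This permits the use of It\^o's formula in the form already employed in the proof of Theorem \ref{Ito}.

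Applying It\^o's formula to $\psi(X_t,Z_t)$ and using that $Z_s+\alpha(Z_s,v,\theta,\phi)=Z_s^\star$, the computation from Theorem \ref{Ito} (equation \eqref{eqIto} with $f(s,X_s,v)$ in place of $f$ and the same compensation argument) gives
\begin{align*}
\psi(X_t,Z_t) &= \psi(X_0,Z_0) + \int_0^t (Z_s,\nabla_x\psi(X_s,Z_s))\,ds \\
&\quad + \int_0^t \!\!\int_{U_0} \{\psi(X_s,Z_s^\star)-\psi(X_s,Z_s)\}\, \sigma(|Z_s-v|) f(s,X_s,v)\, dv\, Q(d\theta) d\phi\, ds + M_t(\psi),
\end{align*}
where $M_t(\psi)$ is a martingale. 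Taking expectations kills $M_t(\psi)$, and I would then recognize the jump integrand as $L_{f(s)}\psi(X_s,Z_s)$ (cf.\ \eqref{generatornonlinear}), so that
\begin{align*}
\mathbb{E}[\psi(X_t,Z_t)] = \mathbb{E}[\psi(X_0,Z_0)] + \int_0^t \mathbb{E}[(Z_s,\nabla_x\psi(X_s,Z_s))]\, ds + \int_0^t \mathbb{E}[L_{f(s)}\psi(X_s,Z_s)]\, ds.
\end{align*}
Rewriting each expectation as $\int(\cdot)\,\mu_s(dx,dz)$ yields exactly \eqref{weakbilinearBEmeasures} with $Q_s(f,\mu)(\psi)$ as in \eqref{OperatorQProbmeasure}.

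The main obstacle is the integrability check and the justification that the jump-part compensation produces a genuine martingale under only the stated hypotheses on $f$ (continuity and \textbf{B2}) plus the moment bound on $\mu$. Concretely, one must ensure (i) the bound above is finite so It\^o's formula applies, and (ii) the time-integrands $s\mapsto \int(z,\nabla_x\psi)\,d\mu_s$ and $s\mapsto Q_s(f,\mu)(\psi)$ are measurable and absolutely integrable on $[0,T]$; both follow from the compact support of $\psi$, continuity of $f$, estimate \eqref{growth}, hypothesis \textbf{B2}, and the finite second moment of $\mu_s$. Once this is in place, the identification with \eqref{weakbilinearBEmeasures} is immediate and completes the proof.
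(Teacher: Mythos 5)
Your proposal matches the paper's own proof, which is simply stated as ``The proof is similar to the proof of Theorem \ref{Ito}.'' You correctly spell out that reference: apply the same It\^o-formula-plus-expectation argument as in Theorem \ref{Ito}, noting the one structural change that the process marginals yield $\mu_s$ rather than $f(s,\cdot,\cdot)\,dx\,dz$, while $f$ enters only through the indicator in the jump integrand and the compensator of $N$, which is precisely what produces the operator $Q_s(f,\mu)$ of \eqref{OperatorQProbmeasure}.
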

\begin{proof}
The proof is similar to the proof of Theorem \ref{Ito}.
\end{proof}
%In Theorem \ref {thm-existence-given-f} we will prove that if  $\{f(t,x,z)\}_{t\in [0,T]}$ solves the Boltzmann equation and the solution $(X_\cdot, Z_\cdot)$ has density, which satisfies few conditions, then the density time marginals  coincide with  $\{f(t,x,z)\}_{t\in [0,T]}$. In this case (\eqref{eq-spacer}, \eqref{eq-velr}) coincides with the McKean -Vlasov SDE  (\eqref{eq-spaceB}, \eqref{eq-velB})

\noindent 
Theorem \ref{ThSDEbilinearBE} states that the Fokker-Planck  equation for the solution of 
(\eqref{eq-spacer}, \eqref{eq-velr}) is the bilinear Boltzmann equation \eqref{weakbilinearBEmeasures} in the case where $\{m(t, v)\}_{t \in [0, T]}$ are the velocity marginals of a solution $\{f(t, x, v)\}_{t \in [0, T]}$ of the Boltzmann equation \eqref{eqn1.1}. 
In the definition given below, we define  the martingale problem on the canonical space associated to (\eqref{eq-spacer}, \eqref{eq-velr})  for any given collection of densities $f(t, x, z)\}_{t \in [0, T]}$ with $m(t, z) = \int_{\mathbb{R}^3} f(t, x, z)dx$. 
 We define with  $\mathbb{D}[0,T]:= \mathbb{D}([0,T],\mathbb{R}^3)$, the  space of all  right continuous functions with left limits defined  on 
$[0, T]$ taking values in $\mathbb{R}^3$. Let $\mathcal{D}_T$$:=\mathcal{B}(\mathbb{D}[0,T])$ be the  Borel-$\sigma$-algebra induced by the Skorohod topology on $\mathbb{D}[0,T]$.
%We denote the value of any $\omega \in \mathbb{D}[0,T]$ at any time $t\in [0,T]$ by  $\omega_t$.  We will use similar notations for functions in $\mathbb{D}[0,T]\times \mathbb{D}[0,T]$.
We denote the value of any $\omega:=(\tilde{\omega},\hat{\omega}) $ $\in \mathbb{D}[0,T]\times \mathbb{D}[0,T]$ at any time $t\in [0,T]$ by  $\omega_t:=(\tilde{\omega}_t,\hat{\omega}_t) $.  
%We will use similar notations for functions. 
Likewise,  the time marginal of a Borel probability measure 
$\mu$ on $\mathcal{D}_T \otimes \mathcal{D}_T $ will be denoted by $\mu_t$ for all $t \in [0, T]$. The measure $\mu_t$ will be a Borel probability measure on  $(\mathbb{R}^3\times \mathbb{R}^3,  \mathcal{B}(\mathbb{R}^3)\otimes\mathcal{B}(\mathbb{R}^3)) $.\\

\begin{dfn}\label{MP} Let $\mathcal{L}(X_0,Z_0)$ denote the law of the random vector  $(X_0, Z_0)$  with values on $ \mathbb{R}^3 \times \mathbb{R}^3$.  Let $T>0$. 
A probability measure $\mu$ on the filtered space $(\mathbb{D}[0,T]\times \mathbb{D}[0,T], \{\mathcal{D}_t \otimes \mathcal{D}_t\}_{t\in [0,T]}, \mathcal{D}_T \otimes \mathcal{D}_T)$ is the solution of the martingale problem posed by  (\eqref{eq-spacer}, \eqref{eq-velr}) and subject to \eqref{first moment initial}, is a solution of the martingale problem for $(\mathcal{L}(X_0,Z_0), \mathcal{L}_f)$, i.e. 
\begin{itemize}
\item[i)] $\mu_0:=\mathcal{L}(X_0,Z_0)$
\item[ii)] $\psi(\tilde{\omega}_t,\hat{\omega}_t) -\psi(\tilde{\omega}_0,\hat{\omega}_o) -\int_0^t \mathcal{ L}_f\psi(\tilde{\omega}_s,\hat{\omega}_s) ds$  is an  $\{\mathcal{D}_t\otimes\mathcal{D}_t\}_{t \in [0,T]}$ -martingale w.r.t. $\mu$ for all $\psi\in C_0^2(\R^3\times\R^3)$, 
\end{itemize}
where 
\begin{equation}\label{generator}
\mathcal{ L}_f\psi(x,z) = (z,\nabla_x\psi(x,z))+ L_{f(t)}\psi(x,z) \quad \forall\, \psi\in C_0^2(\R^3\times\R^3)
\end{equation}
and $ L_{f(t)}$  is defined through \eqref{generatornonlinear}
\end{dfn}

The main results of this subsection are the following Theorem \ref{Theorem MR}, its Corollary  \ref{Corollary MR} and Theorem \ref{ThCor MR}.
\begin{theorem}\label{Theorem MR} 
% Let  $\gamma=1$  and  hypotheses {\bf{A}} be satisfied. 
 Let  $T>0$ and $\{f(t,x,v)\}_{t \in [0,T]}$ be a collection of  densities which satisfy $f(t,x,v)\in C([0,T]\times \R^6)$ and  hypotheses   {\bf{B4}} -{\bf{B6}}. \\
There is a solution $\mu$  on the filtered space $(\mathbb{D}[0,T]\times \mathbb{D}[0,T], \{\mathcal{D}_t \otimes \mathcal{D}_t\}_{t\in [0,T]}, \mathcal{D}_T \otimes \mathcal{D}_T)$ of the  martingale problem posed by  (\eqref{eq-spacer}, \eqref{eq-velr}) and subject to \eqref{first moment initial} up to time $T>0$.\\
Moreover 
\begin{equation}\label{secondmomentmeasure}
\sup_{t\in[0,T]} \int_{\mathbb{R}^3} (|x|^2+|z|^2) \mu_t(dx,dz) < \infty
\end{equation}
% the second moments of the time marginals $\mu_t$ are bounded by a uniform constant for all $t\in [0,T]$.
\end{theorem}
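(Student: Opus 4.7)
The plan is to construct $\mu$ by a cutoff approximation of the singular jump measure: regularise $Q$ near $\theta=0$ to obtain a finite-rate jump SDE, solve it by interlacing, establish uniform moment bounds using Lemma \ref{Lemmaconserved}, then pass to the limit via tightness on the Skorokhod space, inheriting both the martingale-problem characterisation and the moment bound \eqref{secondmomentmeasure}.

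For each $n\ge 1$, set $Q_n(d\theta):=\mathbf{1}_{(1/n,\pi]}(\theta)\,Q(d\theta)$, a finite measure by hypothesis \textbf{A1}. Replacing $Q$ by $Q_n$ in \eqref{eq-velr}, the jump intensity becomes locally bounded (using $\sigma(|z-v|)=|z-v|$ together with \textbf{B4}), and a standard interlacing of the deterministic flow $\dot X=Z$ with compound Poisson jumps produces an adapted c\`adl\`ag process $(X^n_t,Z^n_t)_{t\in[0,T]}$ on some stochastic basis solving the truncated SDE; denote its law on $\mathbb{D}[0,T]\times\mathbb{D}[0,T]$ by $\mu^n$. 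Applying the identity of Lemma \ref{Lemmaconserved} with $(Q_n,\mu^n_t)$ and discarding the non-positive contribution $-\int|z|^2|z-v|\sin^2(\theta/2)Q_n(d\theta)\,d\phi\,f(t,x,v)\,dv\,\mu^n_t$, one obtains
\[
\frac{d}{dt}\mathbb{E}[|Z^n_t|^2]\le A\int|v|^2(|z|+|v|)\,f(t,x,v)\,dv\,\mu^n_t(dx,dz)\le A\bigl(\mathbf{C}_T\,\mathbb{E}[|Z^n_t|]+c_T\bigr),
\]
where $A:=2\pi\int_0^\pi\sin^2(\theta/2)\,Q(d\theta)<\infty$ by \textbf{A1} and the bracketed moments of $f$ are furnished by \textbf{B6} together with the remark after it. A Gronwall argument then delivers $\sup_n\sup_{t\in[0,T]}\mathbb{E}[|Z^n_t|^2]<\infty$; the analogous bound for $X^n$ follows from $X^n_t=X_0+\int_0^t Z^n_s\,ds$ and Cauchy--Schwarz.

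These uniform $L^2$-estimates, together with the compensator bound $\mathbb{E}|Z^n_{\tau+h}-Z^n_\tau|^2\le Ch$ (for any stopping time $\tau\le T-h$) derived from \eqref{growthzalpha} and $\int_0^\pi\theta^2 Q(d\theta)\le \pi\int_0^\pi\theta\,Q(d\theta)<\infty$, combined with the Lipschitz-in-time character of $X^n$, yield tightness of $\{\mu^n\}$ on $\mathbb{D}[0,T]\times\mathbb{D}[0,T]$ through Aldous' criterion. Extract a subsequence with $\mu^{n_k}\to\mu$ weakly. The principal obstacle is then the passage to the limit in the non-local part of the martingale identity for $\mu^{n_k}$, whose integrand $\{\psi(x,z+\alpha)-\psi(x,z)\}\sigma(|z-v|)f(s,x,v)$ is singular at $\theta=0$. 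This is handled by the envelope $|\psi(x,z+\alpha)-\psi(x,z)|\le\|\nabla\psi\|_\infty|\alpha|\le C\theta(|z|+|v|)$ from \eqref{growthzalpha}, which together with $\int_0^\pi\theta\,Q(d\theta)<\infty$ provides an integrable majorant of linear growth in $(|z|,|v|)$, uniformly integrable under $\mu^n$ thanks to the previous moment bound; continuity of the integrand in $(x,z)$ follows from \eqref{Lipschitzalpha} and the continuity of $f$ on $[0,T]\times\R^6$, while the contribution from the omitted strip $(0,1/n_k]$ is at most $C\int_0^{1/n_k}\theta\,Q(d\theta)\to 0$. This identifies $\mu$ as a solution of the martingale problem of Definition \ref{MP}, and \eqref{secondmomentmeasure} is inherited from the uniform estimate on $\mu^n$ by lower semi-continuity of $(x,z)\mapsto|x|^2+|z|^2$ under weak convergence.
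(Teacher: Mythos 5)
Your approach is a genuinely different route from the paper's. The paper does not regularise in the angular variable; instead it truncates in the \emph{velocity} variable, replacing $z$ by $z/(1+d(z,B_j))$ inside $\alpha$ and $\sigma$ (see \eqref{alpha-loc}, \eqref{sigma-loc}), so that both the jump amplitude and the jump rate become bounded. It then solves the $j$-truncated SDE by a Picard iteration that uses the Tanaka rotation $\phi_0$ of Lemma \ref{rotationTa} to compensate for the non-smoothness of $\alpha$, derives moment bounds \emph{uniform in $j$} (Theorem \ref{Th moment control}), and finally constructs the solution by gluing the truncated solutions on the stochastic intervals $[\tau_{j-1},\tau_j)$, $\tau_j=\inf\{t:|Z^j_t|>j\}$, using the Stroock--Varadhan patching lemma, and shows $\mathrm{P}(\cup_j\{\tau_j=T\})=1$ via the uniform estimate \eqref{growthinST1}. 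There is no tightness/weak-limit step at all; the solution is exhibited directly on the canonical path space. Your tightness argument is a reasonable alternative and avoids the need for the Tanaka rotation, at the price of verifying Aldous' criterion and identifying the limit of the non-local martingale term.

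The concrete gap in your proposal is that the $\theta$-cutoff alone does not tame the SDE. After replacing $Q$ by $Q_n$, the jump rate at state $(x,z)$ is of order $Q_n(0,\pi]\int|z-v|f(s,x,v)\,dv$, which grows linearly in $|z|$, and the jump amplitude $|\alpha|\le C\theta(|z|+|v|)$ is still unbounded in $z$. Consequently your ``standard interlacing'' only produces a process up to a possible accumulation time of jumps, and non-explosion must be \emph{proved}, not assumed. Likewise, applying the It\^o formula to the unbounded function $|z|^2$ to get the identity behind Lemma \ref{Lemmaconserved} requires a priori integrability of exactly the quantity you are trying to bound. Both issues call for an additional localisation in $z$ (stop at the exit from $B_j$, obtain estimates uniform in $j$, then remove the stopping), which is precisely what the paper's $\alpha_j$-truncation accomplishes. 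So in effect your construction needs a second layer of truncation matching the paper's; once that is added, both constructions become comparable and your remaining steps (uniform second-moment bound via the conservation identity, Aldous tightness using $\int\theta\,Q(d\theta)<\infty$ and $\int\theta^2Q(d\theta)<\infty$, and removal of the $\theta$-cutoff via $\int_0^{1/n}\theta\,Q(d\theta)\to0$) are sound in outline, though the limit identification for the non-local part of the martingale problem would still need the usual care about $\mu$-a.s.\ continuity of the functionals on Skorokhod space.
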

The proof of Theorem \ref{Theorem MR} is long and  requires several steps which will be presented in the next Subsection \ref{SubPar}. As an immediate corollary of   Theorem \ref{Theorem MR}, we conclude that  a weak solution of (\eqref{eq-spacer}, \eqref{eq-velr}) exists in the following sense.

\begin{dfn} A "weak solution"  of equation (\eqref{eq-spacer}, \eqref{eq-velr}) in the time interval $[0,T]$ is a triplet $((\Omega,\mathscr{F},(\mathscr{F}_t)_{t\in [0,T]},\mathrm{P}), {N}(dv,d\theta, d\phi,dr,ds), $$(X_t,Z_t)_{t\in [0,T]})$ for which the following properties hold 
\begin{itemize}
\item $(\Omega,\mathscr{F},(\mathscr{F}_t)_{t\in [0,T]},\mathrm{P})$ is a stochastic basis;
\item  ${N}(dv,d\theta, d\phi,dr,ds)$ is an adapted  Poisson random measure with compensator $m(s,v) dv Q(d\theta)d\phi  ds dr $;
\item $(X_\cdot,Z_\cdot):=(X_t,Z_t)_{t\in [0,T]})$ is an adapted c\`adl\`ag stochastic process with valued in $\mathbb{R}^d\times\mathbb{R}^d$ which satisfies (\eqref{eq-spacer}, \eqref{eq-velr}) $P$ -a.s.
\end{itemize}
\end{dfn}

\par

%\noindent The following Theorem follows from Theorem \ref{Theorem MR}.

\begin{cor}\label{Corollary MR} 
% Let  $\gamma=1$  and  Hypothesis {\bf{A}} be satisfied. 
 Let  $T>0$ and $\{f(t,x,v)\}_{t \in [0,T]}$ be a collection of  densities which satisfy $f(t,x,u)\in C([0,T]\times \R^6)$ and  hypotheses   $\bf{B_4} - \bf{B_6}$. \\
%For any  stochastic basis $(\Omega,\mathscr{F},(\mathscr{F}_t)_{t\in [0,T]},\mathrm{P})$ with an  adapted Poisson noise $\mathcal{N}(dv,d\theta, d\phi,dr,ds)$   with compensator $m(s,v) dv Q(d\theta)d\phi  ds dr $ and any  random vector  $(Z_0,X_0)$ on $(\Omega, \mathcal{F}_0,P)$ with values on $ \mathbb{R}^3 \times \mathbb{R}^3$ 
 There is a weak solution $((\Omega,\mathscr{F},(\mathscr{F}_t)_{t\in [0,T]},\mathrm{P}), {N}(dv,d\theta, d\phi,dr,ds), $$(X_t,Z_t)_{t\in [0,T]})$ of equation (\eqref{eq-spacer}, \eqref{eq-velr} ) in the time interval $[0,T]$, 
such that
%\begin{equation}
%\mathbb{E}[|Z_0|^2]<\infty,\quad  
%\mathbb{E}[|X_0|^2]<\infty \,,\label{first moment initial}
%\end{equation}
% there exists an adapted Poisson random measure ${N}(dv,d\theta, d\phi,dr,ds)$  with compensator $m(s,v) dv $ $Q(d\theta)d\phi  ds dr $ and  an adapted stochastic process  $(Z_\cdot, X_\cdot)\in S_T^1$ which solves (\eqref{eq-velr},\eqref{eq-spacer}) $P$ -a.s. Moreover 
\begin{equation} \label{finitesecondmomentsol}
\sup_{t\in[0,T]} \mathbb{E}[|X_t|^2] +\sup_{t\in[0,T]} \mathbb{E}[|Z_t|^2] < \infty  \end{equation}
\end{cor}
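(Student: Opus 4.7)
The plan is to derive the corollary directly from Theorem \ref{Theorem MR} via the standard correspondence between solutions of the martingale problem for a pure-jump generator and weak solutions of the associated Poisson-driven SDE. First I would take the canonical filtered space $\Omega := \mathbb{D}[0,T] \times \mathbb{D}[0,T]$ with the filtration $\{\mathcal{D}_t \otimes \mathcal{D}_t\}_{t \in [0,T]}$, set $\mathrm{P} := \mu$ where $\mu$ is the solution of the martingale problem produced by Theorem \ref{Theorem MR}, and relabel the coordinate process $\omega \mapsto (\tilde{\omega}_t, \hat{\omega}_t)$ as $(X_t, Z_t)_{t \in [0,T]}$. The initial condition $\mu_0 = \mathcal{L}(X_0, Z_0)$ handles i) of Definition \ref{MP}; testing the martingale identity with $\psi(x, z) = g(x)$ for $g \in C_0^2(\R^3)$ shows $X_t - X_0 - \int_0^t Z_s \, ds$ is a continuous martingale of finite variation, which by uniqueness of the Doob--Meyer decomposition forces \eqref{eq-spacer} to hold almost surely (after a routine localization since $g$ has compact support but $Z_s$ may be large; here I would use the second-moment bound \eqref{secondmomentmeasure} to justify the truncation limit).

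Next, testing with general $\psi \in C_0^2(\R^3 \times \R^3)$ and subtracting the drift contribution identifies the predictable compensator of the jump measure $\mu^Z(ds, d\zeta) := \sum_{s \leq T:\, \Delta Z_s \neq 0} \delta_{(s, \Delta Z_s)}$ of $Z$ as
\begin{align*}
\nu(ds, d\zeta) = \left( \int_{U_0} \delta_{\alpha(Z_{s-}, v, \theta, \phi)}(d\zeta) \, \sigma(|Z_{s-} - v|) f(s, X_s, v) \, dv \, Q(d\theta) \, d\phi \right) ds.
\end{align*}
Enlarging the filtered probability space to support an auxiliary independent uniform noise (as in the Grigelionis--El Karoui--Lepeltier representation, or Jacod--Shiryaev II.7), I would construct a Poisson random measure $N(dv, d\theta, d\phi, dr, ds)$ on $U_0 \times \R_+^0 \times [0,T]$ with compensator $m(s, v) \, dv \, Q(d\theta) \, d\phi \, dr \, ds$ such that the integrated indicator $\mathbf{1}_{[0, \sigma(|Z_{s-} - v|) f(s, X_s \,|\, v)]}(r)$ against $N$ has predictable compensator equal to $\nu$; the disintegration $f(s, X_s \,|\, v) m(s, v) = f(s, X_s, v)$ is what converts the density ratio into the rejection threshold. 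Since both the jump measure of the candidate right-hand side of \eqref{eq-velr} and that of $Z$ share the same compensator and no diffusive component is present, the two pure-jump semimartingales coincide pathwise, giving \eqref{eq-velr} almost surely.

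Finally, the moment bound \eqref{finitesecondmomentsol} is immediate: since $\mu_t$ is the law of $(X_t, Z_t)$ under $\mathrm{P}$, inequality \eqref{secondmomentmeasure} of Theorem \ref{Theorem MR} rewrites as
\begin{align*}
\sup_{t \in [0,T]} \bigl( \E[|X_t|^2] + \E[|Z_t|^2] \bigr) = \sup_{t \in [0,T]} \int_{\R^3 \times \R^3} (|x|^2 + |z|^2) \, \mu_t(dx, dz) < \infty.
\end{align*}
The genuinely delicate step is the Poisson representation in the second paragraph: one must check that the compensator matching is consistent with the acceptance--rejection ratio $\sigma(|Z_{s-} - v|) f(s, X_s \,|\, v)/m(s, v)$ being a bona fide predictable intensity, which relies on hypothesis $\mathbf{B4}$ for local integrability of $m$ and on the joint continuity of $f$ for predictability of $f(s, X_s \,|\, v)$. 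Once this extension is in place, the assembled triplet $\bigl((\Omega, \mathscr{F}, (\mathscr{F}_t), \mathrm{P}), N, (X, Z)\bigr)$ is by construction a weak solution in the sense specified just before the corollary.
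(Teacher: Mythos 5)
Your proposal is correct and the logical structure agrees with the paper's: take the measure $\mu$ produced by Theorem \ref{Theorem MR}, transfer it to an SDE weak solution, and read off the moment bound from \eqref{secondmomentmeasure}. The difference is in the middle step. The paper verifies the quantitative hypotheses (the estimate $|R_t\psi(x,z)|\le C\,\norm{\psi}\{|z|+|z|^2+\int_{U_0}(|v|+|v|^2)f(t,x,v)\,dv\}$, which uses \eqref{growth}, {\bf B4}, {\bf B6}) and then cites Jacod's equivalence theorem as stated in Horowitz--Karandikar \cite[Theorem A.1]{HK90} to pass from the martingale problem to a weak solution of (\eqref{eq-spacer}, \eqref{eq-velr}). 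You instead unpack that equivalence explicitly: canonical space, Doob--Meyer/quadratic-variation argument for \eqref{eq-spacer}, identification of the jump compensator, and the Grigelionis/El Karoui--Lepeltier extension to realize the Poisson random measure with the rejection threshold $\sigma(|Z_{s-}-v|)f(s,X_s\,|\,v)$. That is precisely the content of the cited theorem, so the two proofs are the same at heart; yours trades a citation for a construction.

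Two small cautions if you wanted to make your version airtight. First, testing the martingale identity with $\psi(x,z)=g(x)$ gives that $g(X_t)-g(X_0)-\int_0^t(Z_s,\nabla g(X_s))\,ds$ is a martingale for every $g$, not directly that $X_t-X_0-\int_0^tZ_s\,ds$ is a ``continuous martingale of finite variation''; you need the standard quadratic-variation computation (test with $g(x)=x_i$ and $g(x)=x_i^2$, localized, to conclude $[M^i,M^i]\equiv 0$) to force the local martingale to vanish. Second, before invoking the Poisson representation you must check the same integrability bound on $R_t\psi$ that the paper writes down, since that is what makes $\mathcal{L}_f$ a bona fide generator in the framework of \cite{Jbook}/\cite{HK90}; you gesture at this with {\bf B4} and the continuity of $f$, but the precise estimate from \eqref{growth}, {\bf B4}, {\bf B6} is the hypothesis that needs verifying. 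Neither point is a genuine gap; both are routine and your moment bound argument is exactly the paper's.
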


%\noindent {\bf{ Proof of Theorem \ref{Theorem MR}}:}
\begin{proof}
% From \eqref{secondmomentmeasure}, condition {\bf B4}, {\bf B6} together with \eqref{growth}  it follows that  all conditions in \cite[Theorem A.1]{HK90}  are satisfied. These  guarantee  that  the existence of a solution $\mu$  of the  martingale problem posed by  (\eqref{eq-spacer},\eqref{eq-velr}) is equivalent to the existence of  a weak solution $((\Omega,\mathscr{F},(\mathscr{F}_t)_{t\in [0,T]},\mathrm{P}), $ ${N}(dv,d\theta, d\phi,dr,ds), $$(X_t,Z_t)_{t\in [0,T]})$ of equation  (\eqref{eq-spacer},\eqref{eq-velr}) in the time interval $[0,T]$.
  First, note that
\begin{equation*}
 |\int_{\mathbb{R}^3\times \Xi} \alpha(z, v, \theta, \phi) \sigma(|z - v|)f(s, x, v)dvQ(d\theta)d\phi|,\end{equation*}
and 
\begin{equation*}\int_{\mathbb{R}^3\times \Xi} |\alpha(z, v, \theta, \phi)|^2 \sigma(|z - v|)f(s, x, v) dvQ(d\theta)d\phi
\end{equation*}
are bounded on compacts as functions of $(s, x, z)$ by the basic estimate
\eqref{growth} along with the conditions {\bf B4} and {\bf B6}. In addition, for any $\psi \in C_b^1(\mathbb{R}^6)$, we have that 
\begin{align*}
&R_t\psi(x, z) \\&:= \nabla\psi(x, z)\cdot [z + \int_{U_0} \a(z, v, \theta, \phi)\sigma(|z - v|) f(t, x, v)dvQ(d\theta)d\phi] \\
&
+ \int_{U_0} \{\psi(x, z + \a(z, v, \theta, \phi)) - \psi(x, z) - \nabla\psi(x, z) \cdot \a(z, v, \theta, \phi)\} \\&\,\,\,\,\,\,\,\,\,\,\,\,\,\,\,\,\,\,\,\,\,\sigma(|z - v|) f(t, x, v)dvQ(d\theta)d\phi
\end{align*}
is well-defined, and 
\[
|R_t\psi(x, z)| \leq C\, \norm{\psi} \,\{|z| + |z|^2 + \int_{U_0} (|v| + |v|^2)f(t, x, v)dv\} \,< \infty
\]
where $\norm{\psi} = \displaystyle{\sup_{x, z} [|\psi(x, z)| + |\nabla \psi(x, z)|]}$.

From this, it follows that  all the conditions in Jacod \cite{Jbook} as stated in \cite[Theorem A.1]{HK90}  are satisfied, and hence,  the existence of a solution $\mu$  
of the  martingale problem posed by  the law of $(X_0, Z_0)$ and the operator $\mathcal{L}_{f(t)}:
0 \le t \le T$ 
 is equivalent to the existence of  a weak solution $((\Omega,\mathscr{F},(\mathscr{F}_t)_{t\in [0,T]},\mathrm{P}),  $ ${N}(dv,d\theta, d\phi,dr,ds),$ $ (X_t,Z_t)_{t\in [0,T]})$ of equation  (\eqref{eq-spacer},\eqref{eq-velr}) in the time interval $[0,T]$. 
\end{proof}

Before we prove Theorem \ref{Theorem MR}, we show in Lemma \ref{LemmaboundnormS1} given below  that any  weak solution of  equation  (\eqref{eq-spacer},\eqref{eq-velr}) which satisfies \eqref{finitesecondmomentsol} 
% in Theorem \ref{Corollary MR}
 is bounded in norm in the Banach space $S_T$ defined here.
\begin{dfn}
 Let  $(\Omega,\mathscr{F},(\mathscr{F}_t)_{t\in [0,T]},\mathrm{P})$ be a given filtered probability space satisfying the usual conditions.   
Let $S_T : = S_T^1(\mathbb{R}^d)$ denote the linear  space of all adapted c\`adl\`ag processes $(X_t)_{t\in [0,T]}$ 
with values on $\mathbb{R}^d$ equipped with norm 
\begin{equation} \label{norm1}
\|X\|_{S_T^1}:= \mathbb{E}[\sup_{s\in[0,T]}|X_s|].
\end{equation}
$S_T^1(\R^d)$ is a non-separable Banach space. (see, for e.g. the proof of  Lemma 4.2.1  in \cite {MR}).
\end{dfn}
In view of our context, set  $d = 6$.\\

\begin{lemma}\label{LemmaboundnormS1}
% Let  $\gamma=1$  and  Hypothesis {\bf{A}} be satisfied. 
 Let  $T>0$ and $\{f(t,x,v)\}_{t \in [0,T]}$ be a collection of  densities which satisfy $f(t,x,u)\in C([0,T]\times \R^6)$ and  hypotheses   $\bf{B_4} - \bf{B_6}$. \\
%For any  stochastic basis $(\Omega,\mathscr{F},(\mathscr{F}_t)_{t\in [0,T]},\mathrm{P})$ with an  adapted Poisson noise $\mathcal{N}(dv,d\theta, d\phi,dr,ds)$   with compensator $m(s,v) dv Q(d\theta)d\phi  ds dr $ and any  random vector  $(Z_0,X_0)$ on $(\Omega, \mathcal{F}_0,P)$ with values on $ \mathbb{R}^3 \times \mathbb{R}^3$ 
 For any weak solution $((\Omega,\mathscr{F},(\mathscr{F}_t)_{t\in [0,T]},\mathrm{P}), {N}(dv,d\theta, d\phi,dr,ds), $$(X_t,Z_t)_{t\in [0,T]})$ of equation (\eqref{eq-spacer}, \eqref{eq-velr}) in the time interval $[0,T]$, 
such that \eqref{finitesecondmomentsol} holds $\|Z_\cdot\|_{S^{1}_T}$ is bounded, and there exist positive constants $k_T$ and $m_T$ such that 
\begin{equation}\label{boundStbyL2s}
\|Z_\cdot\|_{S^{1}_T}\leq k_T \sup_{t \in [0,T]}\mathbb{E}[|Z_t|^2] +m_T + \mathbb{E}[|Z_0|].
\end{equation} 
\end{lemma}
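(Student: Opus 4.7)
The idea is to exploit that $(Z_t)_{t\in[0,T]}$ is a pure jump process with no continuous drift, so that the supremum of its increments is already controlled by the total variation of its jumps. Writing $\Delta Z_s := Z_s - Z_{s_-}$, equation \eqref{eq-velr} gives $Z_t - Z_0 = \sum_{0<s\le t}\Delta Z_s$, and hence
\begin{equation*}
\sup_{t\in[0,T]}|Z_t - Z_0| \;\le\; \sum_{s\le T}|\Delta Z_s| \;=\; \int_0^T\!\!\int_{U_0\times\mathbb{R}_+^0}|\alpha(Z_{s_-},v,\theta,\phi)|\,\mathbf{1}_{[0,\,\sigma(|Z_{s_-}-v|)f(s,X_s|v)]}(r)\,dN.
\end{equation*}

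The first step is to take expectation on both sides and apply the compensation formula to the non-negative integrand above. Integrating out $\mathbf{1}_{[0,\sigma f(s,X_s|v)]}(r)\,dr$ and using the disintegration $f(s,X_s|v)m(s,v)=f(s,X_s,v)$ reduces the right-hand side to
\begin{equation*}
\mathbb{E}\Bigl[\sup_{t\in[0,T]}|Z_t-Z_0|\Bigr] \;\le\; \mathbb{E}\Bigl[\int_0^T\!\!\int_{\mathbb{R}^3\times\Xi}|\hat{\alpha}(Z_s,v,\theta,\phi)|\,f(s,X_s,v)\,dv\,Q(d\theta)\,d\phi\,ds\Bigr].
\end{equation*}
The second step invokes estimate \eqref{growth} in the hard-sphere case $\gamma=1$,
\begin{equation*}
\int_{\Xi}|\hat{\alpha}(z,v,\theta,\phi)|\,Q(d\theta)\,d\phi \;\le\; C(|z|^2+|v|^2),
\end{equation*}
followed by hypothesis \textbf{B4} and the stated consequence $\sup_{s,x}\int|v|^2 f(s,x,v)\,dv\le\mathbf{C}_T$ of \textbf{B6}, which together bound the inner integral by $C(C_T|Z_s|^2+\mathbf{C}_T)$.

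Integrating in $s$, using Fubini, and dominating pointwise by the time-supremum then yields
\begin{equation*}
\mathbb{E}\Bigl[\sup_{t\in[0,T]}|Z_t-Z_0|\Bigr] \;\le\; CT\,C_T\sup_{s\in[0,T]}\mathbb{E}[|Z_s|^2] \;+\; CT\,\mathbf{C}_T.
\end{equation*}
Combining with the triangle inequality $|Z_t|\le|Z_0|+|Z_t-Z_0|$ gives exactly \eqref{boundStbyL2s}, with $k_T:=CT\,C_T$ and $m_T:=CT\,\mathbf{C}_T$. Finiteness of $\|Z_\cdot\|_{S^1_T}$ is then immediate from $\mathbb{E}[|Z_0|^2]<\infty$ (hence $\mathbb{E}[|Z_0|]<\infty$ by Cauchy--Schwarz) together with \eqref{finitesecondmomentsol}.

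The main point requiring care is that one must \emph{avoid} the Burkholder--Davis--Gundy route: a BDG estimate on the compensated-martingale part of $Z$ would force cubic-in-$(z,v)$ integrands via the quadratic variation, and hence a third-moment bound on $Z_s$ that is not supplied by \eqref{finitesecondmomentsol}. What rescues the direct total-variation argument is precisely that the hard-sphere exponent $\gamma=1$ in \eqref{growth} produces a bound quadratic in $(z,v)$, matching exactly the second-moment hypothesis available on $Z_s$.
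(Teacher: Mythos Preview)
Your proposal is correct and follows essentially the same route as the paper: bound $\sup_t|Z_t|$ by $|Z_0|$ plus the integral of $|\alpha|\mathbf{1}_{[\cdot]}$ against $dN$, pass to the compensator, apply \eqref{growth} with $\gamma=1$, and finish with \textbf{B4} and the second-moment consequence of \textbf{B6}. The only cosmetic difference is that the paper leaves the bound in the form $\int_0^T\sup_{s\in[0,t]}\mathbb{E}[|Z_s|^2]\,dt$ before absorbing it into $k_T\sup_{t}\mathbb{E}[|Z_t|^2]$, whereas you pass to the supremum immediately; your added remark on why a BDG approach would demand an unavailable third moment is a useful observation not spelled out in the paper.
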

 %{\bf{ Proof of Lemma \ref{LemmaboundnormS1} :}
\begin{proof}
Let us introduce the notation \begin{equation}\label{stochintmods}
\hat{I}(Z)_{t}:= \int_0^t  \int_{U_0\times \R^+_0} |\alpha(Z_{s_-}, v, \theta, \phi)|1_{[0,\; \s (\mathcal{Z}_{s_-}, v)f(s, \mathcal{X}_s | v)]}(r)
 d{N}
\end{equation}
Then
 \begin{equation}\label{expsupZs}
        \mathbb{E}[ \sup_{t\in [0,T]} |Z_t| ]\leq   \mathbb{E}[\hat{I}({Z})_T] +\mathbb{E}[|{Z}_0|] 
         \end{equation}
Using \eqref{growth},  {\bf{B4}}, and {\bf{B6}}
%{\color{red} {\bf{B2}} }
 it follows 
\begin{align}
&\mathbb{E}[\hat{I}({Z})_{T}] =  
\int_0^T \int_{U_0 \times \R^+_0} \mathbb{E}[  |\alpha({Z}_{s}, v, \theta, \phi)| \s(\mathcal{Z}_{s_{-}}, v)f(s, {X}_s, v)]dv d\phi Q(d\theta)ds \notag
\\& \leq C 
  \left(\int_0^T  \sup_{s'\in [0,s]} \mathbb{E} [   |{Z}_{s'}|^2%{1+\gamma}
] \int_{\mathbb{R}^3} f(s,{X}_s,v)dv]  ds  + \int_0^T \int_{\mathbb{R}^3} \mathbb{E}[|v|^2
%{1+\gamma}
f(s, {X}_s, v)]dvds\right) \notag
  \\& \leq C_T C
    \left(\int_0^T \sup_{s\in [0,t]} \mathbb{E} [ |{Z}_s|^{2}] dt +  \int_0^T \sup_{x \in \mathbb{R}^3}\{\int_{\mathbb{R}^3}%(
 |v|^2 %{1+\gamma}+1)
f(s, x, v)dv\}ds \right)\notag\\&
   \leq C_T C
   \left(\int_0^T \sup_{s\in [0,t]} \mathbb{E} [ |{Z}_s|^{2}] dt +T \mathbf{C}_T \right) \label{ineqIs}
\end{align} 
  
%where we have chosen $ \mathcal{C}_T $ big enough and 
%we have used 
%\begin{equation} \label {inequality in Rd}
%|x|^\epsilon\leq |x|^2 +1 \quad \forall x \in \mathbb{R}^d \quad \text{for} \quad \epsilon\in (0,2].
%\end{equation}
 \eqref{boundStbyL2s} follows then from \eqref{expsupZs} and \eqref{ineqIs}.

% and 
 %  \begin{equation}\label{stochints}
%{I}(Z)_{t}:= \int_0^t  \int_{U_0\times \R^+_0} \alpha({Z}_{s_-}, v, \theta, \phi)1_{[0,\; \s(\mathcal{Z}_{s_-}, v)f(s, \mathcal{X}_s | v)]}(r)d{N} 
%\end{equation}
\end{proof}

We consolidate our results in this Section and state the following Theorem.
\begin{theorem}\label{ThCor MR}
% Let  $\gamma=1$  and  Hypothesis {\bf{A}} be satisfied. 
 Let  $T>0$ and $\{f(t,x,v)\}_{t \in [0,T]}$ be a collection of  densities which satisfy $f(t,x,u)\in C([0,T]\times \R^6)$ and  hypotheses   $\bf{B_4} - \bf{B_6}$. \\
%For any  stochastic basis $(\Omega,\mathscr{F},(\mathscr{F}_t)_{t\in [0,T]},\mathrm{P})$ with an  adapted Poisson noise $\mathcal{N}(dv,d\theta, d\phi,dr,ds)$   with compensator $m(s,v) dv Q(d\theta)d\phi  ds dr $ and any  random vector  $(Z_0,X_0)$ on $(\Omega, \mathcal{F}_0,P)$ with values on $ \mathbb{R}^3 \times \mathbb{R}^3$ 
 There is a weak solution $((\Omega,\mathscr{F},(\mathscr{F}_t)_{t\in [0,T]},\mathrm{P}), {N}(dv,d\theta, d\phi,dr,ds), $$(X_t,Z_t)_{t\in [0,T]})$ of equation (\eqref{eq-spacer}, \eqref{eq-velr} ) in the time interval $[0,T]$, 
such that $(X_t,Z_t)_{t\in [0,T]})$ $\in S_T^1$ and inequality \eqref{boundStbyL2s}  and \eqref{finitesecondmomentsol} holds.
\end{theorem}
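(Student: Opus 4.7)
The plan is to assemble the proof as a direct consolidation of Theorem~\ref{Theorem MR}, Corollary~\ref{Corollary MR} and Lemma~\ref{LemmaboundnormS1}; the substantive work is already done in those results (and deferred to Subsection~\ref{SubPar} for Theorem~\ref{Theorem MR}), so here I only need to verify that their conclusions can be chained together without circularity.

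First I would invoke Theorem~\ref{Theorem MR} to obtain a probability measure $\mu$ on $(\mathbb{D}[0,T]\times\mathbb{D}[0,T],\,\mathcal{D}_T\otimes\mathcal{D}_T)$ solving the martingale problem posed by (\ref{eq-spacer},\,\ref{eq-velr}) together with the second-moment estimate \eqref{secondmomentmeasure}. I would then apply the argument in the proof of Corollary~\ref{Corollary MR}, namely the Jacod equivalence theorem (as quoted through \cite[Theorem~A.1]{HK90}), whose hypotheses were already verified there using \eqref{growth} and \textbf{B4}, \textbf{B6}. This yields a weak solution $((\Omega,\mathscr{F},(\mathscr{F}_t),P),\,N,\,(X_t,Z_t))$ whose one-dimensional marginals coincide with $\mu_t$. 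Consequently \eqref{secondmomentmeasure} immediately translates into $\sup_{t\in[0,T]}\mathbb{E}[|X_t|^2]+\sup_{t\in[0,T]}\mathbb{E}[|Z_t|^2]<\infty$, which is \eqref{finitesecondmomentsol}.

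Having established \eqref{finitesecondmomentsol}, I would feed this weak solution into Lemma~\ref{LemmaboundnormS1}, whose only additional hypothesis was precisely \eqref{finitesecondmomentsol}. The lemma gives $\|Z_\cdot\|_{S_T^1}<\infty$ together with the explicit bound \eqref{boundStbyL2s}. To conclude $(X_\cdot,Z_\cdot)\in S_T^1(\mathbb{R}^6)$, I would use the integral representation \eqref{eq-spacer} to write
\begin{equation*}
\sup_{t\in[0,T]}|X_t| \;\le\; |X_0| + \int_0^T |Z_s|\,ds \;\le\; |X_0| + T\sup_{t\in[0,T]}|Z_t|,
\end{equation*}
so that $\|X_\cdot\|_{S_T^1}\le \mathbb{E}[|X_0|]+T\,\|Z_\cdot\|_{S_T^1}<\infty$, with $\mathbb{E}[|X_0|]<\infty$ coming from \eqref{first moment initial}.

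The only subtle point — and the main thing to double-check to avoid a gap — is that the bound \eqref{boundStbyL2s} and the second-moment bound \eqref{finitesecondmomentsol} were derived in two separate arguments (the moment control inside the proof of Theorem~\ref{Theorem MR} and the a posteriori estimate of Lemma~\ref{LemmaboundnormS1}), so I must ensure the weak solution produced via Jacod's theorem is the \emph{same} object to which Lemma~\ref{LemmaboundnormS1} is applied; since the lemma's hypotheses involve only the existence of a weak solution with \eqref{finitesecondmomentsol}, this compatibility is automatic. No additional estimate is needed, and the proof is essentially a three-line assembly once Theorem~\ref{Theorem MR} and Lemma~\ref{LemmaboundnormS1} are in hand.
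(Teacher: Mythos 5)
Your proof is correct and follows the paper's approach exactly: the paper's proof is the one-line citation ``The result follows from Theorem~\ref{Theorem MR}, Corollary~\ref{Corollary MR} and Lemma~\ref{LemmaboundnormS1},'' and you have simply spelled out how those three results chain together. Your added observation that $\|X_\cdot\|_{S_T^1}\le \mathbb{E}[|X_0|]+T\,\|Z_\cdot\|_{S_T^1}$ via \eqref{eq-spacer} is a worthwhile explicit detail (Lemma~\ref{LemmaboundnormS1} only bounds $Z$), but it is the obvious reading of the paper's terse proof, not a different route.
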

\begin{proof} The result follows from Theorem \ref{Theorem MR}, Corollary \ref{Corollary MR} and Lemma \ref{LemmaboundnormS1}.
\end{proof}

\subsubsection {Construction of the solution of   (\eqref{eq-spacer},\eqref{eq-velr}) }  \label{SubPar}
This subsection is dedicated to the proof of Theorem \ref{Theorem MR}.
 The  weak solution  of    (\eqref{eq-spacer},\eqref{eq-velr}) , subject to  \eqref{first moment initial}, will  be constructed by taking the limit of  strong solutions of a sequence of modified SDEs, defined below in (\eqref{eq-spacer-j-cor}, \eqref{eq-velr-j-cor}.
%appearing  in Theorem \ref{existence-uniqueness Lipschitz} below. 

Let $j\in \mathbb{N}$, $B_j:=\{z\in \mathbb{R}^3:\, |z|\leq j\}$ and 
\begin{equation}  \label{alpha-loc}\alpha_j(z,v,\theta, \phi):=\alpha(\frac{z}{1+d(z,B_j)},v,\theta, \phi)
\end{equation}
and also 
\begin{equation} \label{sigma-loc}\sigma_j(z,v):=\sigma(|\frac{z}{1+d(z,B_j)}-v|)
\end{equation}
%in order to correctly apply the method of  Theorem 4.3.1 in \cite{MR}
where $d(z,B_j)$ denotes the distance of $z\in \mathbb{R}^3$ from $B_j$. 

\begin{theorem}\label{existence-uniqueness Lipschitz}
 %Let Hypothesis {\bf{A}} be satisfied.   
Let $f(t,x,u)\in C([0,T]\times \R^6)$, and $\{f(t,x,v)\}_{t \in [0,T]}$ be a collection of  densities which   
satisfies  hypotheses { {\bf{B4}} - {\bf{B6}}}.
% Let $\gamma =1$  and
 Let $j\in \mathbb{N}$ be fixed. \\
 For any  stochastic basis $(\Omega,\mathscr{F},(\mathscr{F}_t)_{t\in [0,T]},\mathrm{P})$ which supports an adapted Poisson noise $\mathcal{N}(dv,d\theta, d\phi,dr,ds)$  with compensator $m(s,v) dv $$Q(d\theta)d\phi  ds dr $ and any  random vector   $(\mathcal{X}_0,\mathcal{Z}_0)$ on $(\Omega, \mathcal{F}_0,P)$ with values on $ \mathbb{R}^3 \times \mathbb{R}^3$ having finite second  moment, 
 %\begin{cor}\label{existence-uniqueness Lipschitz Cor}
there  exists  on the same filtered space an adapted Poisson random measure ${N}^j(dv,d\theta, d\phi,dr,ds)$  with compensator $m(s,v) dv $ $Q(d\theta)d\phi   dr  ds$ and a   stochastic process  $(\mathcal{Z}_\cdot, \mathcal{X}_\cdot) \in $   $S_T^1$,  
 which solves 
%\eqref{eq-velr-j}, \eqref{eq-spacer-j} $P$ -a.s. and has finite second moment,  satisfies  also 
  \begin {align}
\mathcal{X}_t & = \mathcal{X}_0+\int_0^t \mathcal{Z}_s ds \quad \forall t\in [0,T].  \label{eq-spacer-j-cor}\\   \mathcal{Z}_t &= \mathcal{Z}_0+ \int_0^t \int_{U}\alpha_j(\mathcal{Z}_{s_-}, v, \theta, \phi)  \notag\\
    &\times 1_{[0,\; \s_j(\mathcal{Z}_{s_-}, v) f(s,\mathcal{X}_s |v)]}(r)
 {N}^j(dv,d\theta, d\phi, dr, ds) \quad \forall t\in [0,T] \label{eq-velr-j-cor}  
  \end{align} 
 $P$ -a.s.
%\end{cor}
\end{theorem}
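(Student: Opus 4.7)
The plan is to construct the solution of (\ref{eq-spacer-j-cor}, \ref{eq-velr-j-cor}) by a Picard iteration in the Banach space $S_T^1(\mathbb{R}^6)$, taking $N^j := \mathcal{N}$ (the two compensators coincide). The truncation has been engineered so that the coefficients become bounded and globally Lipschitz in the state variables: since $z \mapsto T_j(z) := z/(1+d(z,B_j))$ is globally Lipschitz with $|T_j(z)| \le j$, one combines \eqref{growthzalpha} to obtain $|\alpha_j(z,v,\theta,\phi)| \le 2\theta(j+|v|)$, Lemma \ref{rotationTa} to obtain an estimate of the form
\[
\Bigl|\int_0^{2\pi}[\alpha_j(z,v,\theta,\phi) - \alpha_j(z',v,\theta,\phi+\phi_0)]\,d\phi\Bigr| \le C_j\, \theta\, |z - z'|,
\]
and the linearity of $\sigma$ in the hard-sphere case to get $|\sigma_j(z,v) - \sigma_j(z',v)| \le c\,|T_j(z) - T_j(z')|$, each bounded on $\{|v| \le R\}$.

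For the iteration, I would set $(\mathcal{X}^{(0)},\mathcal{Z}^{(0)}) := (\mathcal{X}_0,\mathcal{Z}_0)$ and define
\begin{align*}
\mathcal{X}^{(n+1)}_t &:= \mathcal{X}_0 + \int_0^t \mathcal{Z}^{(n)}_s\,ds,\\
\mathcal{Z}^{(n+1)}_t &:= \mathcal{Z}_0 + \int_0^t\!\!\int_{U_0\times\R_0^+}\alpha_j(\mathcal{Z}^{(n)}_{s-},v,\theta,\phi)\,1_{[0,\,\sigma_j(\mathcal{Z}^{(n)}_{s-},v)\,f(s,\mathcal{X}^{(n)}_s|v)]}(r)\,\mathcal{N}.
\end{align*}
Using the boundedness of $\alpha_j,\sigma_j$ together with Hypotheses {\bf B4}, {\bf B6} and the integrability $\int_0^\pi \theta\,Q(d\theta) < \infty$ from {\bf A1}, one checks inductively that each iterate lies in $S_T^1$ with a uniform bound independent of $n$.

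The central step is a contraction estimate for the increments $\Delta\mathcal{Z}^{(n)} := \mathcal{Z}^{(n+1)} - \mathcal{Z}^{(n)}$ and $\Delta\mathcal{X}^{(n)}$. Splitting the difference of Poisson integrands as
\[
[\alpha_j(\mathcal{Z}^{(n)}_{s-},v,\theta,\phi) - \alpha_j(\mathcal{Z}^{(n-1)}_{s-},v,\theta,\phi+\phi_0)]\,1_{[0,\,\sigma_j(\mathcal{Z}^{(n)}_{s-},v)\,f(s,\mathcal{X}^{(n)}_s|v)]}(r)
\]
plus a residual $\alpha_j(\mathcal{Z}^{(n-1)}_{s-},v,\theta,\phi+\phi_0)\bigl[1_{[0,a_n]}(r) - 1_{[0,a_{n-1}]}(r)\bigr]$ with $a_k := \sigma_j(\mathcal{Z}^{(k)}_{s-},v)\,f(s,\mathcal{X}^{(k)}_s|v)$, the first contribution, after invoking the rotational invariance of $d\phi$ and passing to the compensator, is bounded by $C_j\int_0^t \mathbb{E}|\Delta\mathcal{Z}^{(n-1)}_s|\,ds$ via Lemma \ref{rotationTa}. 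For the residual, the elementary identity $\int_0^\infty |1_{[0,a_n]}(r) - 1_{[0,a_{n-1}]}(r)|\,dr = |a_n - a_{n-1}|$ together with the Lipschitz property of $\sigma_j$ in $z$ and of $x \mapsto f(s,x|v)$ on compacts (from {\bf B5}) dominates the residual by a constant (depending on $j$) times $|\Delta\mathcal{Z}^{(n-1)}_s| + |\Delta\mathcal{X}^{(n-1)}_s|$, weighted by $|\alpha_j| \le 2\theta(j+|v|)$, which integrates against $m(s,v)dv\,Q(d\theta)d\phi$ thanks to {\bf A1} and {\bf B6}. Combining the two contributions and applying Gr\"onwall yields $\|\Delta\mathcal{Z}^{(n)}\|_{S_T^1} + \|\Delta\mathcal{X}^{(n)}\|_{S_T^1} \le C_{j,T}\cdot T \cdot (\|\Delta\mathcal{Z}^{(n-1)}\|_{S_T^1} + \|\Delta\mathcal{X}^{(n-1)}\|_{S_T^1})$ on short intervals; one then iterates over finitely many subintervals to cover $[0,T]$.

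The main obstacle is handling the non-Lipschitz indicator, which is overcome by the symmetric-difference identity for intervals combined with {\bf B5}; the secondary subtlety is the intrinsic non-smoothness of $(z,v) \mapsto (\mathbf{n},v-z)\mathbf{n}$, for which Tanaka's rotation $\phi_0$ (rather than a naive difference $\alpha_j(z) - \alpha_j(z')$) is indispensable. Once the Cauchy property is established, the $S_T^1$-limit $(\mathcal{X},\mathcal{Z})$ inherits path-by-path the stochastic equation (\ref{eq-spacer-j-cor}, \ref{eq-velr-j-cor}) and belongs to $S_T^1$ by lower semicontinuity of the norm, yielding the claim.
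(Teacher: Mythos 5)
There is a genuine gap in the proposal, and it centers on the Tanaka rotation. You take $N^j := \mathcal{N}$ and run a Picard iteration with no angle shift in the iterates, but then invoke a decomposition
\begin{align*}
&[\alpha_j(\mathcal{Z}^{(n)}_{s-},v,\theta,\phi) - \alpha_j(\mathcal{Z}^{(n-1)}_{s-},v,\theta,\phi+\phi_0)]\,1_{[0,a_n]}(r) \\
&\qquad+ \alpha_j(\mathcal{Z}^{(n-1)}_{s-},v,\theta,\phi+\phi_0)\bigl[1_{[0,a_n]}(r) - 1_{[0,a_{n-1}]}(r)\bigr].
\end{align*}
This sums to $\alpha_j(\mathcal{Z}^{(n)}_{s-},v,\theta,\phi)1_{[0,a_n]}(r) - \alpha_j(\mathcal{Z}^{(n-1)}_{s-},v,\theta,\phi+\phi_0)1_{[0,a_{n-1}]}(r)$, which is the integrand of a \emph{shifted} iteration, not of the one you define. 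The quantity you actually need to control is the difference at the \emph{same} angle $\phi$ in both terms, and there is no Lipschitz estimate for $(z,z')\mapsto |\alpha_j(z,v,\theta,\phi)-\alpha_j(z',v,\theta,\phi)|$; that is precisely the non-smoothness Tanaka's Lemma \ref{rotationTa} is designed to circumvent. Appealing to ``rotational invariance of $d\phi$'' after passing to the compensator does not repair this: once you bound $\mathbb{E}\sup_t|\int\cdot\,d\mathcal{N}|$ by $\mathbb{E}\int|\cdot|\,$ against the compensator, the absolute value of the \emph{difference} is taken before integrating $\phi$, and shifting the angle in only one of the two arguments inside that absolute value is not a rotational-invariance identity. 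Equality in law between shifted and unshifted stochastic integrals is also of no help here, because the Cauchy estimate is a pathwise/$L^1$ comparison of consecutive iterates, not a statement about laws.

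The missing idea is exactly what the paper does: build the \emph{cumulative} rotation $\phi^n$ into the Picard scheme, so that at each step the Lipschitz bound \eqref{Lipalphan} applies to $\alpha_j(\mathcal{Z}^{n}_{s-},v,\theta,\phi+\phi^n)-\alpha_j(\mathcal{Z}^{n-1}_{s-},v,\theta,\phi+\phi^{n-1})$; then the iterates converge in $S_T^1$ to a solution of (\eqref{eq-spacer-j},\eqref{eq-velr-j}) containing a limiting predictable shift $\phi_j(s,v)$. Only afterward does one define $N^j$ as the $\phi_j(s,v)$-rotation of $\mathcal{N}$ (equation \eqref{rotatedPoisson}), which has the same compensator and for which the shifted integrand against $\mathcal{N}$ equals the unshifted integrand against $N^j$ pathwise. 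The theorem deliberately asserts only the \emph{existence} of some such $N^j$ on the same space; your assumption $N^j=\mathcal{N}$ is both unjustified and, in general, false. Your treatment of the indicator residual via the symmetric-difference identity, the Lipschitz control of $\sigma_j$ through \eqref{growth normalized}--\eqref{Lipschitz normalized}, and the use of {\bf B4}--{\bf B6} and {\bf A1} for integrability are all consistent with the paper; it is the omission of the angle shift in the iterates, together with the consequent misidentification of $N^j$, that leaves the argument incomplete.
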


%It then follows directly  the statement of the following corollary:
%\begin{cor}\label{existence-uniqueness Lipschitz compensated}
%Let the random vector  $(Z_0,X_0)$ satisfy \eqref{first moment %initial}. 
 %For all $T>0$ there  exists a unique solution  on $S_T^1$ of the stochastic  equation    
  %\begin {align}
  % & Z_t^{j} = Z_0\notag\\& +\int_0^t \int_{U}\alpha_j(Z_{s}^{j}, v_s, %\xi)1_{[0,\; \s(|Z_{s}^{j} - v_s|)f(s,X_s| v_s)]}(r)
% d\tilde {N}\notag\\& + \int_{U_0} \alpha_j(Z_s^{j}, v_s, \xi) %\sigma(|Z_s^{j} - v_s|)\beta(|X_s^{j} - y_s|) f(s,X_s,v_s) dv Q(d\xi)ds 
% \label {eq-velr-j-com}
%\\X_t^{j}& = X_0+\int_0^t Z_s^{j}ds.  \label{eq-spacer-j-com}
 %   \end{align}
%\end{cor}
%\begin{proof} If the stochastic integrals in  the stochastic equation  \eqref{eq-velr-j}, \eqref{eq-spacer-j} are well-defined then  the stochastic equation  \eqref{eq-velr-j}, \eqref{eq-spacer-j} is equivalent to  the stochastic equation  \eqref{eq-velr-j-com}, \eqref{eq-spacer-j-com}. (See e.g. Chapter 5 \cite{MR}).
%\end{proof}

\begin{proof}{\it{ of Theorem} \ref{existence-uniqueness Lipschitz}}\\
First we prove that there is a predictable process $\phi_j(s,v)$  with values in $[0,2\pi)$ 
%an adapted Poisson random measure ${N}^j(dv,d\theta, d\phi,dr,ds)$  with compensator $m(s,v) dv $ $Q(d\theta)d\phi   dr  ds$ 
and  an adapted stochastic process  $(\mathcal{X}_\cdot, \mathcal{Z}_\cdot) \in $   $S_T^1$    which satisfies  
  \begin {align}
\mathcal{X}_t& = \mathcal{X}_0+\int_0^t \mathcal{Z}_s ds \quad \forall t\in [0,T].  \label{eq-spacer-j}\\
    \mathcal{Z}_t &= \mathcal{Z}_0+ \int_0^t \int_{U}\alpha_j(\mathcal{Z}_{s_-}, v, \theta, \phi+\phi_j(s,v))  \notag\\
    &\times 1_{[0,\; \s_j(\mathcal{Z}_{s_-}, v) f(s,\mathcal{X}_s |v)]}(r)
 \mathcal{N}(dv,d\theta, d\phi, dr, ds) \quad \forall t\in [0,T] \label{eq-velr-j}
\end{align} 
    $P$ -a.s. \\
 We start by noting that 
\begin{equation}\label{growth normalized}
\frac{|z|}{1+d(z,B_j)}\leq \min(j,|z|)
\end{equation}
and there exists a constant $K_j>0$, such that
\begin{equation}\label{Lipschitz normalized}
\left|\frac{z}{1+d(z,B_j)}-\frac{z'}{1+d(z',B_j)}\right| \leq K_j|z-z'|.
\end{equation}
Assume   $(\mathcal{X},\mathcal{Z})_{t\in [0,T]}\in S_T^1$. 
%As $|\alpha_j(z,v,\xi)|\leq |\alpha(z,v,\xi)|$ 
%it follows from Lemma \ref{stochIntdefined}
For any $\phi_j$,  the stochastic integrals 
 \begin{equation}\label{stochintmodj}
\hat{I}_j(\mathcal{Z})_{t}:= \int_0^t  \int_{U_0\times \R^+_0} |\alpha_j(\mathcal{Z}_{s_-}, v, \theta, \phi+\phi_j(s,v))|1_{[0,\; \s_j(\mathcal{Z}_{s_-}, v)f(s, \mathcal{X}_s | v)]}(r)
 d\mathcal{N}
\end{equation}
 and 
   \begin{equation}\label{stochintj}
{I}_j(\mathcal{Z})_{t}:= \int_0^t  \int_{U_0\times \R^+_0} \alpha_j(\mathcal{Z}_{s_-}, v, \theta, \phi+\phi_j(s,v))1_{[0,\; \s_j(\mathcal{Z}_{s_-}, v)f(s, \mathcal{X}_s | v)]}(r)
 d\mathcal{N} 
\end{equation}
are well defined.
In fact, using \eqref{growth},  {\bf{B4}}, {\bf{B6}} it follows 
\begin{align*}
&\mathbb{E}[\hat{I}_j(\mathcal{Z})_{T}] =  
\int_0^T \int_{U_0 } \mathbb{E}[  |\alpha_j(\mathcal{Z}_{s}, v, \theta, \phi+\phi_j(s,v))| \s_j(\mathcal{Z}_{s_{-}} ,v)f(s,\mathcal{ X}_s, v)]dv d\phi Q(d\theta)ds 
\\& \leq C 
  \left( \int_0^T \mathbb{E} [ {\frac{|\mathcal{Z}_s|^2%{1+\gamma}
}{(1+d(\mathcal{Z}_s,B_j))^2%{1+\gamma}
}} \int_{\mathbb{R}^3} f(s,\mathcal{X}_s,v)dv] ds + \int_0^T \int_{\mathbb{R}^3} \mathbb{E}[|v|^2%{1+\gamma}
f(s, \mathcal{X}_s, v)]dvds\right) 
 \\& \leq C_T C
    \left(T j^2%{1+\gamma} 
+  \int_0^T \sup_{x \in \mathbb{R}^3}\{\int_{\mathbb{R}^3} |v|^2%{1+\gamma}
f(s, x, v)dv\}ds \right) \,<\infty 
\end{align*} 

% Moreover, using  $(\hat{I}_j(\mathcal{Z}))_{t}\leq (\hat{I}(Z))_{t} $  $\forall %t\in [0,T]$ and \eqref{ineqstochintmod}, 
It follows  that 
\begin{equation}\label{ineqstochintmodj}
\mathbb{E}[\hat{I_j}(\mathcal{Z}))_{T}] \leq K  j^2%{\gamma +1}
T +M_T\,, 
\end{equation}
with $K>0$, $M_T>0$. 
Let $ (S_j\mathcal{X},S_j\mathcal{Z})_{t\in [0,T]}$ denote the process defined through 
\begin{equation*}
    S_j\mathcal{Z}_t := \mathcal{Z}_0 +I_j(\mathcal{Z})_t \;\;\text{and}\;\;\; 
    S_j\mathcal{X}_t :=\mathcal{X}_0+\int_0^t \mathcal{Z}_s ds\,.   
    \end{equation*}
    If the random vector $(\mathcal{X}_0,\mathcal{Z}_0)$ satisfies \eqref{first moment initial},  then 
       \begin{equation}\label{supS}
    \sup_{t\in [0,T]} |S_j\mathcal{Z}_t| \leq \hat{I}_j(\mathcal{Z})_T +|\mathcal{Z}_0| \quad a.s.
    \end{equation}
    and hence 
     \begin{equation}\label{expsupSj}
        \mathbb{E}[ \sup_{t\in [0,T]} |S_j\mathcal{Z}_t| ]\leq   \mathbb{E}[\hat{I}_j(\mathcal{Z})_T] +\mathbb{E}[|\mathcal{Z}_0|] 
         \end{equation}

             and, with \eqref{ineqstochintmodj}, one concludes that $(S_j\mathcal{X},S_j\mathcal{Z})_{t\in [0,T]}\in S_T^1$. \\
 
 Given a stochastic basis $(\Omega,\mathscr{F},(\mathscr{F}_t)_{t\in [0,T]},\mathrm{P})$ with an adapted Poisson noise $\mathcal{N}(dv,d\theta, d\phi,dr,ds)$  with compensator $m(s,v) dv $$Q(d\theta)d\phi  ds dr $ and a  random vector   $(\mathcal{Z}_0,\mathcal{X}_0)$ on $(\Omega, \mathcal{F}_0,P)$  which  
 satisfies \eqref{first moment initial}, a Picard iteration scheme is constructed which converges in $S_T^1$ to the solution of  (\eqref{eq-velr-j} , \eqref{eq-spacer-j}).
% involving  a different adapted Poisson noise  ${N}(dv,d\theta, d\phi,dr,ds)$  with compensator $m(s,v) dv $$Q(d\theta)d\phi  ds dr $. 
 Following ideas from   Tanaka \cite{Ta} and   Fournier  and M\'el\'eard  \cite{FM01} the iteration    involves the translation of the parameter $\phi$ given in Lemma \ref{rotationTa}. 
%,which is responsible for the definition of the new adapted Poisson noise  ${N}(dv,d\theta, d\phi,dr,ds)$ as well. 
For $n\in \mathbb{N}$ we define 
 
   \begin {align}
  \mathcal{X}_t^0 &= \mathcal{X}_0 \notag\\ 
    \mathcal{Z}_t^0 &= \mathcal{Z}_0 \notag\\ 
\mathcal{X}_t^1& = \mathcal{X}_0+\int_0^t \mathcal{Z}_s^1 ds, \notag\\
    \mathcal{Z}_t^1 &= \mathcal{Z}_0 \notag + \int_0^t \int_{U}\alpha_j(\mathcal{Z}^0_{s_-}, v, \theta,\phi) 
    \notag\\
    &\times\ 1_{[0,\; \s_j(\mathcal{Z}^0_{s_-}, v) f(s,\mathcal{X}^0_s |v)]}(r)
 \mathcal{N}(dy,dv, d\theta, d\phi, dr, ds), \notag\\
\mathcal{X}^{n+1}_t& = \mathcal{X}_0+\int_0^t \mathcal{Z}^{n+1}_s ds, , \,\,\,n \ge 1   \label{eq-spacer-j-P}\\
    \mathcal{Z}^{n+1}_t &= \mathcal{Z}_0+ \int_0^t \int_{U}\alpha_j(\mathcal{Z}^n_{s_-}, v, \theta, \phi+ \phi^{n})  \notag\\
    &\times 1_{[0,\; \s_j(\mathcal{Z}^n_{s_-}, v) f(s,\mathcal{X}^n_s |v)]}(r)
 \mathcal{N}(dy,dv,d\theta, d\phi, dr, ds) \label{eq-velr-j-P}.
    \end{align}
    where
    \begin{align*}
    &\phi^1:=\phi_0(\frac{\mathcal{Z}^0_{s_-}}{1+d(\mathcal{Z}_{s_-}^0,B_j)},v,\frac{ \mathcal{Z}^1_{s_-}}{1+d(\mathcal{Z}_{s_-}^1,B_j)},v)\\
   & \phi^{n+1}:=\phi^n+\phi_0(\frac{\mathcal{Z}^n_{s_-}}{1+d(\mathcal{Z}_{s_-}^n,B_j)},v, \frac{\mathcal{Z}^{n+1}_{s_-}}{1+d(\mathcal{Z}_{s_-}^{n+1},B_j)},v)
     \end{align*}
     
      We first prove the following inequality:
    \begin{equation}\label{Lipalphan}
    | \alpha_j(\mathcal{Z}^{n}_{s_-}, v, \theta,\phi+  \phi^{n})- \alpha_j(\mathcal{Z}^{n-1}_{s_-}, v, \theta,\phi+  \phi^{n-1})|\leq 2 K_j \theta|\mathcal{Z}^{n}_{s_-}-\mathcal{Z}^{n-1}_{s_-}| 
    \end{equation}
    In fact, using \eqref{Lipschitz normalized} and \eqref{PrameterTrans}, it follows 
    \begin{align*}
    &| \alpha_j(\mathcal{Z}^{n}_{s_-}, v, \theta, \phi+\phi^{n})- \alpha_j(\mathcal{Z}^{n-1}_{s_-}, v, \theta, \phi +\phi^{n-1})|\leq 
    \\&
    K_j \sin^2(\frac{\theta}{2})   |\mathcal{Z}^{n}_{s_-}-\mathcal{Z}^{n-1}_{s_-}| +  |\frac{\sin(\theta)}{2}|\times \notag\\&|\Gamma(v-\frac{\mathcal{Z}^{n}_{s_-}}{1+d(\mathcal{Z}^{n}_{s_-}, B_j)},\phi+\phi^{n-1}+\phi_0(\frac{\mathcal{Z}^{n-1}_{s_-}}{1+d(\mathcal{Z}^{n-1}_{s_-}, B_j)},v, \frac{\mathcal{Z}^{n}_{s_-}}{1+d(\mathcal{Z}^{n}_{s_-}, B_j)},v) )\notag\\&- \Gamma(v-\frac{\mathcal{Z}^{n-1}_{s_-}}{1+d(\mathcal{Z}^{n-1}_{s_-}, B_j)} ,\phi+\phi^{n-1})|
\end{align*}
   
with 
\begin{align}&|\Gamma(v-\frac{\mathcal{Z}^{n}_{s_-}}{1+d(\mathcal{Z}^{n}_{s_-}, B_j)},\phi+\phi^{n-1}+\phi_0(\frac{\mathcal{Z}^{n-1}_{s_-}}{1+d(\mathcal{Z}^{n-1}_{s_-}, B_j)},v, \frac{\mathcal{Z}^{n}_{s_-}}{1+d(\mathcal{Z}^{n}_{s_-}, B_j)},v) )\notag\\&- \Gamma(v-\frac{\mathcal{Z}^{n-1}_{s_-}}{1+d(\mathcal{Z}^{n-1}_{s_-}, B_j)} ,\phi+\phi^{n-1})|
=  \notag\\& |\Gamma(v-\frac{\mathcal{Z}^{n}_{s_-}}{1+d(\mathcal{Z}^{n}_{s_-}, B_j)},\phi+\phi_0(\frac{\mathcal{Z}^{n-1}_{s_-}}{1+d(\mathcal{Z}^{n-1}_{s_-}, B_j)},v, \frac{\mathcal{Z}^{n}_{s_-}}{1+d(\mathcal{Z}^{n}_{s_-}, B_j)},v) )\notag\\&- \Gamma(v-\frac{\mathcal{Z}^{n-1}_{s_-}}{1+d(\mathcal{Z}^{n-1}_{s_-}, B_j)} ,\phi|\label {cutrotation}\\& \leq 3K_j  |\mathcal{Z}^{n}_s-\mathcal{Z}^{n-1}_s| \label{LipGamma}
    \end{align}
where the last inequality was a consequence of \eqref{LipschitzGamma} and \eqref{Lipschitz normalized} .
%it follows that 
%\begin{align}
%&|\Gamma(v-\frac{\mathcal{Z}^{n}_{s_-}}{1+d(\mathcal{Z}^{n}_{s_-}, B_j)},\phi+\phi_0(\frac{\mathcal{Z}^{n-1}_{s_-}}{1+d(\mathcal{Z}^{n-1}_{s_-}, B_j)},v, \frac{\mathcal{Z}^{n}_{s_-}}{1+d(\mathcal{Z}^{n}_{s_-}, B_j)},v) )\notag\\&- \Gamma(v-\frac{\mathcal{Z}^{n-1}_{s_-}}{1+d(\mathcal{Z}^{n-1}_{s_-}, B_j)} ,\phi)| \leq \notag\\&
% sin^2(\frac{\theta}{2})   |\mathcal{Z}^{n}_s-\mathcal{Z}^{n-1}_s| +
%3  |\mathcal{Z}^{n}_s-\mathcal{Z}^{n-1}_s| \label{LipGamma}
%\end{align}
It follows that \eqref{Lipalphan}  holds. \\
  
  In order to prove that the iteration $(\mathcal{X}^n_\cdot, \mathcal{Z}^n_\cdot)$ converges in $S_T^1$ we develop some inequalities.

     \begin{align}
&\mathbb{E}[\sup_{s\in [0,t]} |\mathcal{Z}^{n+1}_{s_-}-\mathcal{Z}^n_{s_-}| ]\leq \label{ineqMRL1}\\&
\mathbb{E}[\sup_{s\in [0,t]} | \int_0^s  \int_{U}  \alpha_j(\mathcal{Z}^{n}_{s'_-}, v, \theta,\phi+  \phi^{n})
1_{[0,\; \sigma_j(|\mathcal{Z}^{n}_{s'_-} - v|)f(s',\mathcal{X}^{n}_{s'}| v)]}(r) \notag \\&-\alpha_j(\mathcal{Z}^{n-1}_{s'_-}, v, \theta, \phi+ \phi^{n-1})  \notag
1_{[0,\; \sigma_j(|\mathcal{Z}^{n-1}_{s'_-} - v|)f(s',\mathcal{X}^{n-1}_{s'}| v)]}(r) d\mathcal{N} |] \notag
\\&\leq \mathbb{E}[  \int_0^t  \int_{\R^3}\int_{\mathbb{R}_+^0} \int_0^\pi  \int_0^{2 \pi} |\alpha_j(\mathcal{Z}^{n}_{s'}, v, \theta, \phi+ \phi^{n})
1_{[0,\; \sigma_j(|\mathcal{Z}^{n}_{s'} - v|)f(s',\mathcal{X}^{n}_{s'}| v)]}(r) \notag \\& -\alpha_j(\mathcal{Z}^{n-1}_{s'}, v, \theta, \phi+ \phi^{n-1})
1_{[0,\; \sigma_j(|\mathcal{Z}^{n-1}_{s'} - v|)f(s',\mathcal{X}^{n-1}_{s'} v)]}(r) | d\phi   m(s' v) dv Q(d\theta) dr ds'] \notag
\end{align}
 (For the last inequality, see e.g. the proof of  Lemma 2.4.12 in \cite{MR} or Chapter II, Section 3, page 62 \cite{IW}.) 
We will prove the following inequality
     \begin{align}\label{contractionforalphaj}
       & \int_0^t  \int_{\R^3} \int_{\mathbb{R}_+^0}\int_0^\pi \int_0^{2 \pi} | \alpha_j(\mathcal{Z}^{n}_{s_-}, v, \theta,\phi+  \phi^{n})1_{[0,\; \s_j(\mathcal{Z}^{n}_{s_-} , v) f(s, \mathcal{X}^{n}_s |v) ]}(r)\notag\\& - \alpha_j(\mathcal{Z}^{n-1}_{s_-}, v, \theta,\phi+ \phi^{n-1})
    1_{[0,\; \s_j(\mathcal{Z}^{n-1}_{s_-} , v)f(s, \mathcal{X}^{n-1}_s| v)]}(r)| d\phi  m(s,v)dv Q(d\theta) dr ds 
  \notag  \\&\leq % C_{j,T}  
C_{j} \int_0^t (|\mathcal{X}^{n}_s-\mathcal{X}_s^{n-1}|+ 2 |\mathcal{Z}^{n}_{s_-}-\mathcal{Z}_{s_-}^{n-1}|)ds\quad P\,-a.s.
     \end{align}
From \eqref{ineqMRL1} and  \eqref{contractionforalphaj} it follows that
\begin{align} \label{contraction}
&\mathbb{E}[\sup_{s\in [0,t]} |\mathcal{Z}^{n+1}_{s_-}-\mathcal{Z}^n_{s_-}| ]+ \mathbb{E}[\sup_{s\in [0,t]} |\mathcal{X}^{n+1}_s-\mathcal{X}^n_s| ] \notag\\& \leq 
 \int_0^t \mathbb{E}[(2 C_{j}
%C_{j,T}  
+1)|\mathcal{Z}^{n}_{s'}-\mathcal{Z}^{n-1}_{s'}|+   C_{j} %C_{j,T} 
|\mathcal{X}^{n}_{s'}-\mathcal{X}^{n-1}_{s'}| ]ds' \notag
\\&\leq (2 C_{j}
%C_{j,T}
  +1) \int_0^t \mathbb{E}[\sup_{s'\in [0,s]} \{|\mathcal{Z}^{n}_{s'}-\mathcal{Z}^{n-1}_{s'}|+ |\mathcal{X}^{n}_{s'}-\mathcal{X}^{n-1}_{s'}|\}] ds
\end{align}
It would then follow that there exists a stochastic process $(\mathcal{X}_\cdot, \mathcal{Z}_\cdot) \in S_T^1$ such that 
\begin{equation} \label{convergencePicard}
\lim_{n \to \infty}\|( \mathcal{X}^{n}_\cdot,\mathcal{Z}^{n}_\cdot) -(\mathcal{X}_\cdot,\mathcal{Z}_\cdot)\|_{S_T^1}=0
\end{equation}
hence, we first prove the inequality \eqref{contractionforalphaj}. After this, we show that $(\mathcal{X}_\cdot,\mathcal{Z}_\cdot )$ is a  solution of (\eqref{eq-spacer-j}, \eqref{eq-velr-j}).
 
{\it Proof of \eqref{contractionforalphaj}:}\\
  \begin{align*} & \int_0^t  \int_{\R^3} \int_{\mathbb{R}_+^0}\int_0^\pi \int_0^{2 \pi} | \alpha_j(\mathcal{Z}^{n}_{s_-}, v, \theta, \phi+ \phi^{n})1_{[0,\; \s_j(\mathcal{Z}^{n}_{s_-} , v) f(s, \mathcal{X}^{n}_s |v) ]}(r)\notag\\& - \alpha_j(\mathcal{Z}^{n-1}_{s_-}, v, \theta,\phi+ \phi^{n-1})
    1_{[0,\; \s_j(\mathcal{Z}^{n-1}_{s_-} , v)f(s, \mathcal{X}^{n-1}_s| v)]}(r)| d\phi  m(s,v)dv Q(d\theta) dr ds \notag \\
    \leq & I +II
  \end{align*}
  where  
   \begin{align*}
         & I:=\int_0^t  \int_{U}\left| \alpha_j(\mathcal{Z}^{n}_{s_-}, v, \theta, \phi+ \phi^{n})- \alpha_j(\mathcal{Z}^{n-1}_{s_-}, v, \theta, \phi+ \phi^{n-1}) \right|\\& \times 1_{[0,\; \s_j(\mathcal{Z}^{n}_{s_-} , v) f(s, \mathcal{X}^{n}_s |v) ]}(r)
         d\phi m(s,v)dv Q(d\theta) dr ds\\& \leq  \int_0^t C_j |\mathcal{Z}^{n}_{s_-}-\mathcal{Z}^{n-1}_{s_-}|ds \quad P\,-a.s., 
          \end{align*}
         for some constant $C_j>0$,  where the last  inequality follows from  \eqref{Lipalphan},   \eqref{growth normalized} and condition {\bf B4}.
         \begin{align*}
         & II := \int_0^t  \int_{U}  |\alpha_j(\mathcal{Z}^{n-1}_{s_-}, v, \theta, \phi+ \phi^{n-1})|\times \\&
          |1_{[0,\; \s_j(\mathcal{Z}^{n}_{s_-} , v) f(s, \mathcal{X}^{n}_s |v) ]}(r)- 1_{[0,\; \s_j(\mathcal{Z}^{n-1}_{s_-} , v)f(s, \mathcal{X}^{n-1}_s| v)]}(r)|d\phi m(s,v)dv Q(d\theta) dr ds
          \\& \leq \int_0^t  \int_{U_0} |\alpha_j(\mathcal{Z}^{n-1}_{s_-}, v, \theta, \phi+ \phi^{n-1})|\\& \times \{ \max(\s_j(\mathcal{Z}^{n}_{s_-} , v) f(s, \mathcal{X}^{n}_s |v) ,\s_j(\mathcal{Z}^{n-1}_{s_-} , v)f(s, \mathcal{X}^{n-1}_s| v))\\&
          -  \min(\s_j(\mathcal{Z}^{n}_{s_-} , v) f(s, \mathcal{X}^{n}_s |v) ,\s_j(\mathcal{Z}^{n-1}_{s_-} , v)f(s, \mathcal{X}^{n-1}_s| v))\} d\phi m(s,v)dv Q(d\theta) ds \\&= \int_0^t  \int_{U_0} |\alpha_j(\mathcal{Z}^{n-1}_{s_-}, v, \theta, \phi+ \phi^{n-1})|
          \\& \times \{ \max(\s_j(\mathcal{Z}^{n}_{s_-} , v) f(s, \mathcal{X}^{n}_s ,v) ,\s_j(\mathcal{Z}^{n-1}_{s_-} , v)f(s, \mathcal{X}^{n-1}_s, v))\\&
          -  \min(\s_j(\mathcal{Z}^{n}_{s_-} , v) f(s, \mathcal{X}^{n}_s , v) ,\s_j(\mathcal{Z}^{n-1}_{s_-} , v)f(s, \mathcal{X}^{n-1}_s,  v))\} d\phi dv Q(d\theta) ds 
          \\&= \int_0^t  \int_{U_0} |\alpha_j(\mathcal{Z}^{n-1}_{s_-}, v, \theta, \phi+ \phi^{n-1})|\\& \times  |\s_j(\mathcal{Z}^{n}_{s_-} , v) f(s, \mathcal{X}^{n}_s, v) -\s_j(\mathcal{Z}^{n-1}_{s_-},  v)f(s, \mathcal{X}^{n-1}_s, v)| d\phi  dv Q(d\theta)  ds
    \\& \leq A+B \quad P\,-a.s.,         \end{align*}
    \begin{align*}
    A:=&\int_0^t  \int_{U_0} |\alpha_j(\mathcal{Z}^{n-1}_{s_-}, v, \theta, \phi+ \phi^{n-1})|\\& \times  |\s_j(\mathcal{Z}^{n}_{s_-} , v)| | f(s, \mathcal{X}^{n}_{s_-}, v) -f(s, \mathcal{X}^{n-1}_{s_-}, v)| d\phi  dv Q(d\theta)  ds\\& \leq C \int_0^t  \int_{\R^3} (|\frac{\mathcal{Z}^{n-1}_{s_-}}{(1+d(\mathcal{Z}^{n-1}_{s_-},B_j))} |+ |v| )|\frac{\mathcal{Z}^{n}_{s_-}}{(1+d(\mathcal{Z}^{n}_{s_-},B_j))}-v|%^\gamma 
\\&\times | f(s, \mathcal{X}^{n}_{s_-}, v) -f(s, \mathcal{X}^{n-1}_{s_-}, v)| dv ds
    \\& \leq C_j \int_0^t |\mathcal{X}^{n}_{s_-} -\mathcal{X}^{n-1}_{s_-}| ds \quad P\,-a.s.,  
    \end{align*}
    for some constant $C_j>0$, where we used \eqref{growth normalized} as well as condition {\bf B5}.
       \begin{align*}
    B:=&\int_0^t  \int_{U_0} |\alpha_j(\mathcal{Z}^{n-1}_{s_-}, v, \theta, \phi+ \phi^{n-1})|\\& \times 
    |\s_j(\mathcal{Z}^{n}_{s_-} , v)- \s_j(\mathcal{Z}^{n-1}_s , v)|f(s, \mathcal{X}^{n-1}_s,  v))d\phi  dv Q(d\theta)  ds 
    \\&  \leq C  \int_0^t  \int_{\R^3} (|\frac{\mathcal{Z}^{n-1}_{s_-}}{(1+d(\mathcal{Z}^{n-1}_{s_-},B_j))} |+ |v| ) \\&\times |\frac{\mathcal{Z}^{n}_{s_-}}{(1+d(\mathcal{Z}^{n}_{s_-},B_j))} - \frac{\mathcal{Z}^{n-1}_{s_-}}{(1+d(\mathcal{Z}^{n-1}_{s_-},B_j))}|%^\gamma
    f(s, \mathcal{X}^{n-1}_{s_-}, v) dv ds
  \\& \leq C_j \int_0^t |\mathcal{Z}^{n}_{s_-} -\mathcal{Z}^{n-1}_{s_-}|%^\gamma 
ds
   %  \\& {\color{red} \leq } C_{j,T} \int_0^t |\mathcal{Z}^{n}_{s_-} -\mathcal{Z}^{n-1}_{s_-}| ds \quad P\,-a.s., 
%\\& {\color{red} \text {to me it seems rather}  \leq  C_{j,T} ( \int_0^t |\mathcal{Z}^{n}_{s_-} -\mathcal{Z}^{n-1}_{s_-}| ds)^\gamma \quad P\,-a.s., }\\&
%{\color{red} \text {but then we have no more contraction in \eqref{convergencePicard}.} }\\&
%{\color{red} \text{ Also first red inequality seems not correct  for } \gamma<1 \text{?}}
    \end{align*}
      for some constants $C_j>0$, %$C_{j,T}>0$ 
 where we used \eqref{Lipschitz normalized}  as well as conditions {\bf B4} and {\bf B6}.
 Thus \eqref{contractionforalphaj} is proven.

 %{\color {red} and, for $\gamma \in (0,1)$ also Cauchy -Schwarz  inequality}. Let us take $ C_{j,T}>C_j$ also in the term $I$, then  \eqref{contractionforalphaj} is proven.\\

\noindent Let us prove that there is a predictable function $\phi_j(s,v)$ on  $(\Omega,\mathscr{F},(\mathscr{F}_t)_{t\in [0,T]},\mathrm{P})$ such that the stochastic process $(\mathcal{X}_\cdot, \mathcal{Z}_\cdot) \in S_T^1$, obtained through the Picard- iteration in   \eqref{convergencePicard}, solves \eqref{eq-spacer-j} and  \eqref{eq-velr-j}.  We use the same arguments as in the proof of Lemma 4.8 \cite{FM01}, page 593, where the stochastic process associated to the spatially homogenous Boltzmann equation with Maxwellian interaction ($\sigma \equiv 1$) is constructed. We remark however that, since here $\{f(t,x,z)\}_{t\in [0,T]}$ is given, we obtain a strong solution on   $(\Omega,\mathscr{F},(\mathscr{F}_t)_{t\in [0,T]},\mathrm{P})$.\\
From \eqref{cutrotation} and \eqref{LipGamma} and the definition of $\phi_n$,  it follows that
\begin{align*}
&|\Gamma(v-\frac{\mathcal{Z}^{n}_{s_-}}{1+d(\mathcal{Z}^{n}_{s_-}, B_j)},\phi+\phi^{n}) - \Gamma(v-\frac{\mathcal{Z}^{n-1}_{s_-}}{1+d(\mathcal{Z}^{n-1}_{s_-}, B_j)} ,\phi+\phi^{n-1})| \\& \leq 3 K_j |\mathcal{Z}^{n}_s-\mathcal{Z}^{n-1}_s| 
\end{align*}

As a consequence there exists a  predictable function $\delta_j(s,\phi, v)$ such that for $n \to \infty$ 
%\begin{align}\label{limitGamma}
%&\|\Gamma(v_s-\frac{\mathcal{Z}^{n}_{s_-}}{1+d(\mathcal{Z}^{n}_{s_-}, B_j)},\phi+\phi_0(\frac{\mathcal{Z}^{n-1}_{s_-}}{1+d(\mathcal{Z}^{n-1}_{s_-}, B_j)},v_s, \frac{\mathcal{Z}^{n}_{s_-}}{1+d(\mathcal{Z}^{n}_{s_-}, B_j)},v_s) )-\notag \\ & \delta_j(s,\phi, v_s)\|_{S_T^1} \, \to \, 0 \quad P -a.s..
%\end{align}
\begin{equation}\label{limitGamma}
\|\Gamma(v-\frac{\mathcal{Z}^{n}_{s_-}}{1+d(\mathcal{Z}^{n}_{s_-}, B_j)},\phi+\phi^n)-\delta_j(s,\phi, v)\|_{S_T^1} \, \to \, 0 \quad P -a.s.
\end{equation}
with 
$|\delta_j(s,\phi, v)|=|v-\frac{\mathcal{Z}_{s_-}}{1+d(\mathcal{Z}_{s_-}, B_j)}|$ and such that   $v-\frac{\mathcal{Z}_{s_-}}{1+d(\mathcal{Z}_{s_-}, B_j)}$ is orthogonal to $\delta_j(s,\phi, v)$. 
Hence, in particular, these two properties hold for $\delta_j(s,0, v)$ which allow us to conclude that for each $v \in \mathbb{R}^3$, there exists an adapted process $\phi_j(s,v)$  such that 
\begin{equation*}
\delta_j(s,0, v)=\Gamma(v-\frac{\mathcal{Z}^n_{s_-}}{1+d(\mathcal{Z}^n_{s_-}, B_j)},\phi_j(s,v) ).
\end{equation*}
From the above equality, it follows that $\phi_j$  is predictable by using the definition of $\Gamma(v-\frac{\mathcal{Z}^n_{s_-}}{1+d(\mathcal{Z}^n_{s_-}, B_j)},\phi_j(s,v) )$ and predictability of $\delta_j(s,0, v)$.

  %Thus for some {\color{red} predictable} function $\phi_j(s,v)$ 
Using this it follows that 
%\begin{equation}
%\delta_j(s,0, v_s)=\Gamma(v_s-\frac{\mathcal{Z}_{s_-}}{1+d(\mathcal{Z}_{s_-}, B_j)},\phi_j(s,v_s) )
%\end{equation}
\begin{equation}\label{defphij}
\delta_j(s,\phi, v)=\Gamma(v-\frac{\mathcal{Z}_{s_-}}{1+d(\mathcal{Z}_{s_-}, B_j)},\phi+\phi_j(s,v) )\quad  \forall \phi \in [0,2\pi) 
\end{equation}
and the stochastic process $(\mathcal{X}_\cdot, \mathcal{Z}_\cdot) \in S_T^1$, obtained through the Picard- iteration in   \eqref{convergencePicard}, solves (\eqref{eq-spacer-j},\eqref{eq-velr-j}) . 

%\end{proof}
%Theroem \ref{existence-uniqueness Lipschitz} implies the following Corollary.

  %\begin{align*}
    
  %  \begin{align*}
     %   \int_0^t  \int_{\mathbb{D}\times \mathbb{R}_+\times [0, \pi)} & |\int_0^{2\pi}\alpha_j(z, v_s, \xi)1_{[0,\; \s(|z - v_s|)f(s,X_s |v_s)]}(r)- 
     %   \alpha_j(z', v_s, \xi)1_{[0,\; \s(|z' - v_s|)f(s,X^{\prime}_s |v_s)]}(r)d\phi | \\& m(s,v)dv Q(d\theta) dr ds 
     %  \\&\leq L_j (|z-z'|+|X_s-X^{\prime}_s|+(|z^{\prime}| + E|V_s|) |z-z'|^\gamma)\quad \text{for all {\color{red} Check}}\,  z,z'\in \mathbb{R}^3, 
      %  \end{align*}
       % Similar to Lemma \ref{contraction} it can then be proven that the following inequality holds {\color{red} no ! same problem as proof of Lemma \ref{contraction}!Needs to be corrected}:
  
        %It follows that there exists $n\in \mathbb{N}$ such that  $(S^n_jZ,S^n_jX)_{t\in [0,T]}$ is a contraction from $S_T^1$ to $S_T^1$. It then follows,  that the mapping $S_j$ has a unique fixed point  $ (Z_j,X_j)_{t\in [0,T]}$$\in S_T^1$.\\

%\begin{Remark}\label{RemSDEj}
%\eqref{eq-velr-j-cor}, \eqref{eq-spacer-j-cor} is an alternative way to write the SDE \eqref{eq-velr-j}, \eqref{eq-spacer-j} on the same filtered space  $(\Omega,\mathscr{F},(\mathscr{F}_t)_{t\in [0,T]},\mathrm{P})$. 
%\end{Remark}
%\begin{proof}
In the following, we rewrite the SDE (\eqref{eq-spacer-j},\eqref{eq-velr-j}) in the form  (\eqref{eq-spacer-j-cor},\eqref{eq-velr-j-cor}), i.e we prove that a strong solution of  (\eqref{eq-spacer-j},\eqref{eq-velr-j}) is also a strong solution of  (\eqref{eq-spacer-j-cor},\eqref{eq-velr-j-cor}).\\
 In showing this, we follow  a trick by Tanaka \cite{Ta}, which is also used and explained by Fournier and   M\'el\'eard in  \cite{FM}  Lemma 4.7: Given  the Poisson noise $\mathcal{N}(dv,d\theta, d\phi,dr,ds)$ on the stochastic basis $(\Omega,\mathscr{F},(\mathscr{F}_t)_{t\in [0,T]},\mathrm{P})$, defined in Theorem \ref{existence-uniqueness Lipschitz}, we  define a new counting measure $ {N}^j(dv,d\theta, d\phi, dr, ds) $,  obtained  by making a rotation with angle $\phi_j(s,v)$.
\begin{align}\label{rotatedPoisson}
N^j(A)=&\int_0^T \int_U 1_{A}(v,\theta,\phi+ \phi_j(s,v),r,s) \mathcal{N}(dv,d\theta, d\phi,dr,ds)  \\&\quad \quad \quad \quad \quad  \forall A\in \mathcal{B}([0,T])\otimes \mathcal{B}(U_0)\otimes \mathcal{B}(\mathbb{R}_0^+)\notag 
\end{align}
$N^j(dv,d\theta, d\phi,dr,ds)$ is an adapted Poisson random measure on the same stochastic basis  $(\Omega,\mathscr{F},(\mathscr{F}_t)_{t\in [0,T]},\mathrm{P})$, which has the same compensator $m(s,v) dv $$Q(d\theta)d\phi  ds dr $ as $\mathcal{N}(dv,d\theta, d\phi,dr,ds)$. Furthermore, for any predictable and  integrable function $h(v, \theta, \phi, r,s, \omega)$
\begin{align*}
&\int_0^t \int_U h(v,\theta,\phi+ \phi_j(s,v),r,s) \mathcal{N}(dv,d\theta, d\phi,dr,ds) = \\&\int_0^t \int_U h(v,\theta,\phi,r,s) {N}^j(dv,d\theta, d\phi,dr,ds)  \quad \forall t\in [0,T]
\end{align*} 
SDE (\eqref{eq-spacer-j},\eqref{eq-velr-j}) can hence be rewritten in the form  (\eqref{eq-spacer-j-cor},\eqref{eq-velr-j-cor}), i.e a strong solution of  (\eqref{eq-spacer-j},\eqref{eq-velr-j}) is also a strong solution of  (\eqref{eq-spacer-j-cor},\eqref{eq-velr-j-cor}) .
\end{proof}

   \begin{theorem}\label{Th moment control} Let $(\mathcal{X}_\cdot,\mathcal{Z}_\cdot) $ be a solution of   (\eqref{eq-spacer-j-cor}, \eqref{eq-velr-j-cor}).  There exist  constants $K_T\geq 0$,  
     $M_T\geq 0$, $k_T>0$ and $m_T>0$ which do not depend on $j$ and such that the following inequalities holds
     \begin{equation}\label{GrownwallZj}
                        \sup_{t\in [0,T]} \mathbb{E}[  |\mathcal{Z}_t|^2 ]\leq (M_T  +\mathbb{E}[|\mathcal{Z}_0|^2]) e^{T K_T}  
                          \end{equation}  
                          \begin{equation}\label{boundStbyL2j}
\|\mathcal{Z}_\cdot\|_{S^{1}_T}\leq k_T \int_0^T \sup_{t \in [0,T]}\mathbb{E}[|\mathcal{Z}_t|^2]dt +m_T + \mathbb{E}[|\mathcal{Z}_0|].
\end{equation}
From \eqref{GrownwallZj} and \eqref{boundStbyL2j} it follows 
\begin{equation}\label{growthinST1}
\|\mathcal{Z}_\cdot\|_{S^{1}_T}\leq T k_T (M_T  + \mathbb{E}[|\mathcal{Z}_0|^2]) e^{T K_T}  + m_T + \mathbb{E}[|\mathcal{Z}_0|].
\end{equation}

% Moreover, there exist  constants $K_T\geq 0$,  
  %   $M_T\geq 0$, $k_T>0$ and $m_T>0$, which do not depend on $j$,  and such that the  inequalities \eqref{GrownwallZj}, \eqref{boundStbyL2j} and  \eqref{growthinST1} hold for  $(\mathcal{Z}_\cdot, \mathcal{X}_\cdot)$.
    
\end{theorem}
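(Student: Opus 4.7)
The plan is to apply It\^o's formula to $|\mathcal{Z}_t|^2$ for the solution of the modified SDE (\eqref{eq-spacer-j-cor}, \eqref{eq-velr-j-cor}), to derive a pointwise upper bound on the compensator that is linear in $|z|^2$ with constants independent of $j$, and then invoke Gronwall. Writing $\tilde z := z/(1 + d(z, B_j))$, so that $\alpha_j(z, v, \theta, \phi) = \alpha(\tilde z, v, \theta, \phi)$, $\sigma_j(z, v) = \sigma(|\tilde z - v|)$, and $|\tilde z| \leq \min(|z|, j)$, It\^o's formula (applied as in the proof of Theorem \ref{Ito}, combined with the integrability bound \eqref{ineqstochintmodj} and standard localization) yields
\begin{equation*}
\mathbb{E}[|\mathcal{Z}_t|^2] = \mathbb{E}[|\mathcal{Z}_0|^2] + \mathbb{E}\int_0^t \mathcal{J}_j(s, \mathcal{X}_s, \mathcal{Z}_s)\, ds,
\end{equation*}
where $\mathcal{J}_j(s, x, z) := \int_{U_0}[2(z, \alpha_j) + |\alpha_j|^2]\sigma_j(z,v) f(s, x, v)\, dv\, Q(d\theta)\, d\phi$. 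The key analytic step is to show $\mathcal{J}_j(s, x, z) \leq C_T(1 + |z|^2)$ with $C_T$ independent of $j$.

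Using the parametrization \eqref{PrameterTrans} applied to $\alpha_j$ and the symmetry \eqref{symmGamma}, integration in $\phi$ gives
\begin{equation*}
\int_0^{2\pi}\bigl[2(z, \alpha_j) + |\alpha_j|^2\bigr]\, d\phi = 2\pi \sin^2(\theta/2)\bigl[2(z, v - \tilde z) + |v - \tilde z|^2\bigr].
\end{equation*}
Setting $d := d(z, B_j)$ and using that $\tilde z$ is parallel to $z$ with $z - \tilde z = d\, \tilde z$, the bracket equals $-(1+2d)|\tilde z|^2 + 2d(\tilde z, v) + |v|^2$. For $|z| \leq j$ this reduces to $|v|^2 - |z|^2$, exactly the energy-conservation expression of Lemma \ref{Lemmaconserved}; its positive part integrated against $\sigma_j f\, dv$ is controlled by splitting $\mathbb{R}^3$ into $\{|v| \geq |z|\}$ and $\{|v| < |z|\}$ and invoking hypothesis {\bf B6}. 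For $|z| > j$ the dissipative term $-(1+2d)|\tilde z|^2 \leq 0$ may be dropped, and since $d|\tilde z| = d|z|/(1+d) \leq |z|$ the remainder is bounded above by $2|z||v| + |v|^2$; multiplying by $\sigma_j \leq c(|z| + |v|)$ and integrating against $f\, dv$ via {\bf B4}--{\bf B6} plus Young's inequality gives the uniform bound $\mathcal{J}_j(s, x, z) \leq C_T(1 + |z|^2)$. Substituting into the It\^o identity and applying Gronwall yields \eqref{GrownwallZj}.

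For \eqref{boundStbyL2j}, the total-variation process $\hat I_j(\mathcal{Z})_t$ from \eqref{stochintmodj} satisfies $|\mathcal{Z}_t| \leq |\mathcal{Z}_0| + \hat I_j(\mathcal{Z})_t$. Using the pointwise bound $|\alpha_j|\sigma_j \leq 2c\theta(|\tilde z| + |v|)^2 \leq 2c\theta(|z| + |v|)^2$ together with {\bf A1}, {\bf B4}, {\bf B6}, one obtains $\int_{U_0}|\alpha_j|\sigma_j f\, dv\, Q(d\theta)\, d\phi \leq k_T(1 + |z|^2)$; compensating the Poisson integral and taking expectation then gives
\begin{equation*}
\|\mathcal{Z}_\cdot\|_{S^1_T} \leq \mathbb{E}[|\mathcal{Z}_0|] + k_T\int_0^T (1 + \mathbb{E}[|\mathcal{Z}_s|^2])\, ds \leq k_T\int_0^T \sup_{t \in [0, T]}\mathbb{E}[|\mathcal{Z}_t|^2]\, ds + m_T + \mathbb{E}[|\mathcal{Z}_0|],
\end{equation*}
with $m_T := k_T T$. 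Combining with \eqref{GrownwallZj} immediately yields \eqref{growthinST1}. The main obstacle is the $j$-uniformity of $C_T$: a direct bound via \eqref{growth} for the hard-sphere kernel ($\gamma = 1$) produces a cubic-in-$|z|$ term that would force the Gronwall constant to depend on $j$; this is circumvented by the approximate energy-conservation structure of the $\alpha_j$-collision, which survives the normalization to $\tilde z$ because $d|\tilde z|/(1+d) \leq |z|$ holds uniformly in $j$.
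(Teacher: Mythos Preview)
Your proof is correct and follows essentially the same approach as the paper. Both apply It\^o's formula to $|\mathcal{Z}_t|^2$, use the parametrization \eqref{PrameterTrans} and the $\phi$-symmetry \eqref{symmGamma} to reduce the compensator to the quadratic form $\sin^2(\theta/2)\bigl[-(1+2d)|\tilde z|^2+2d(\tilde z,v)+|v|^2\bigr]$, drop the negative term, and control the rest via {\bf B4}--{\bf B6} to obtain a $j$-independent linear bound in $|z|^2$, whence Gronwall; the $S^1_T$ bound is obtained identically. The only cosmetic difference is that the paper reaches this quadratic form by a symmetrization trick---introducing $v^{j,\star}=v-\alpha_j$ and the ``approximate energy conservation'' error $E_j=\frac{2d}{1+d}(z,\alpha_j)$---while you compute $2(z,\alpha_j)+|\alpha_j|^2$ directly; the two routes give the same expression.
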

\begin{proof}
 We will use the symmetry properties of the in-coming and outgoing velocities defined in  \eqref{eqn1.4} to prove 
\begin{equation}\label{expsupZjsquare}
          \sup_{t\in [0,T]}  \mathbb{E}[  |\mathcal{Z}_t|^2 ]\leq K_T \int_0^T \sup_{s\in [0,t]} \mathbb{E} [ |\mathcal{Z}_s|^2] dt +M_T  +\mathbb{E}[|\mathcal{Z}_0|^2] 
             \end{equation}
 with $K_T\geq 0$ and $M_T \geq 0$ not dependent on $j$. Inequality    \eqref{GrownwallZj} would then follow by the Gronwall Lemma.\\
 The proof of \eqref{expsupZjsquare} follows in the spirit of the proof of Lemma \ref{Lemmaconserved}.
 
 {\it Proof of \eqref{expsupZjsquare}:}\\
 Given $(z,v) \in \mathbb{R}^3\times \mathbb{R}^3$, let us denote for simplicity of the notation 
 \begin{equation} 
\left\{
\begin{aligned}
z^{j, \star}=z+\alpha_j(z,v,\theta, \phi) \\
v^{j, \star}=v-\alpha_j(z,v,\theta, \phi), 
\end{aligned}
\right. 
\end{equation}
%where 
%\begin{equation*}
%\alpha_j:=\alpha_j(z,v,\xi)= \frac{({\bf n},v-z) {\bf n}}{1+d(z,B_j)}
%\end{equation*}
%It is then easy to check that the following bound holds 
Where we use the notation $\alpha_j:=\alpha_j(z,v,\theta, \phi)$ and $d_j:=d(z,B_j)$. Then 
\begin{eqnarray} \label{notmoresymm}
&|z^{j, \star}|^2+|v^{j, \star}|^2 =|z+\alpha_j|^2 + |v-  \alpha_j|^2 
=|z|^2 + |v|^2+E_j\\
& \text{with} \quad  E_j= 2 |\alpha_j|^2- 2(v-z, \alpha_j)\notag
\end{eqnarray}
%\begin{equation*}
%E_j=\frac{2d_j}{1+d_j}(z, \alpha_j)
%\end{equation*}
\begin{eqnarray*}
&E_j=  2 |\alpha_j|^2 - 2(v-\frac{z}{1+d_j}, \alpha_j)- 2(\frac{z}{1+d_j}- z, \alpha_j)\\&= 2 |\alpha_j|^2- 2 |\alpha_j|^2- 2(\frac{z}{1+d_j}- z, \alpha_j)  =\frac{2d_j}{1+d_j}(z, \alpha_j)
\end{eqnarray*}
where we have used the definition of $\alpha_j$ in \eqref{alpha-loc} with \eqref{alpha}.
Thus we write equation \eqref{notmoresymm} as
\begin{equation} \label{diff}
|z^{j, \star}|^2 -|z|^2= |v|^2 -|v^{j, \star}|^2 +E_j
%\frac{2d_j}{1+d_j}(z, \alpha_j)
\end{equation}
with
\begin{equation}\label{Ej}
E_j:=E_j(z,v):=\frac{2d(z,B_j)}{1+d(z,B_j)}(z, \alpha_j(z,v,\theta, \phi))
\end{equation}
%In fact 
%\begin{eqnarray*}& |z^{j, \star}|^2+|v^{j, \star}|^2  = 
%|z|^2 + |v|^2+2|\alpha_j|^2 + 2(\alpha_j,z-v)\\&=|z|^2 + |v|^2+2 \frac{|{\bf n},v-z)|^2}{(1+d(z,B_j))^2}-2\frac{|({\bf n},v-z)|^2 }{1+d(z,B_j)} \leq |z|^2 + |v|^2
%\end{eqnarray*}

 Let us fix $j \in \mathbb{N}$ and for simplicity of notation  denote with 
 $(\mathcal{ X}_\cdot,\mathcal{Z}_\cdot)$ a solution (\eqref{eq-spacer-j-cor}, \eqref{eq-velr-j-cor}).
% Let us denote with $Q(d\xi)$$:=Q(d\theta)d\phi$. 

 According to \cite{MRT} we can apply the It\^o formula for $\mathcal{Z}_\cdot$ to the function     $\psi(z):=|z|^2$  and obtain 
\begin{eqnarray*}
 &|\mathcal{Z}_t|^2 - |\mathcal{Z}_0|^2 \\
 & =  \int_0^t \int_{U_0\times \mathbb{R}_+^0 } (|\mathcal{Z}_s^{j, \star} |^2- |\mathcal{Z}_s|^2) 1_{[0,\; \s_j(\mathcal{Z}_s, v)f(s,\mathcal{X}_s |v)]}(r)  m(s,v)dv Q(d\theta)d\phi dr ds  +\mathcal{M}_t^j\\ 
 & = |\mathcal{Z}_0|^2+  \int_0^t \int_{U_0} (|\mathcal{Z}_s^{j, \star} |^2- |\mathcal{Z}_s|^2) \s_j(\mathcal{Z}_s, v)f(s,\mathcal{X}_s, v) dv Q(d\theta)d\phi ds +\mathcal{M}_t^j  \\& =   |\mathcal{Z}_0|^2 + \int_0^t \int_{U_0} (|v|^2-|v^{j, \star} |^2+E_j(\mathcal{Z}_s,v)) \s_j(\mathcal{Z}_s, v)f(s,\mathcal{X}_s, v) dv Q(d\theta)d\phi  ds   +\mathcal{M}_t^j
 \end{eqnarray*}
 where $\mathcal{M}_\cdot^j$ is a martingale and the last inequality is due to \eqref{notmoresymm}. It follows 
 \begin{eqnarray*}
 &|\mathcal{Z}_t|^2 - |\mathcal{Z}_0|^2\\
 &= \frac{1}{2} \int_0^t \int_{U_0} (|\mathcal{Z}_s^{j, \star} |^2- |\mathcal{Z}_s|^2) \s_j(\mathcal{Z}_s, v) f(s,X_s, v) dv Q(d\theta)d\phi ds \\&  + \frac{1}{2} \int_0^t \int_{U_0} (|v|^2-|v^{j, \star} |^2 +E_j(\mathcal{Z}_s,v) ) )\s_j(\mathcal{Z}_s, v) f(s,\mathcal{X}_s, v) dv Q(d\theta)d\phi ds   +\mathcal{M}_t^j
 \\& = \frac{1}{2}\int_0^t \int_{U_0} \{2(\mathcal{Z}_s+v, \alpha_j(\mathcal{Z}_s,v,\xi) )+E_j(\mathcal{Z}_s,v) )\} \s_j(\mathcal{Z}_s, v) f(s,\mathcal{X}_s, v) dv Q(d\theta)d\phi ds  +\mathcal{M}_t^j
 \\& =  |\mathcal{Z}_0|^2+ \frac{1}{2}\int_0^t \int_{U_0} \{2(\mathcal{Z}_s+v, \alpha_j(\mathcal{Z}_s,v,\xi) )+\frac{2d(\mathcal{Z}_s,B_j)}{1+d(\mathcal{Z}_s,B_j)}(\mathcal{Z}_s, \alpha_j(\mathcal{Z}_s,v,\theta, \phi))\}\\& \times \s_j(\mathcal{Z}_s, v) f(s,\mathcal{X}_s, v) dv Q(d\theta)d\phi ds  +\mathcal{M}_t^j
 \\&=  |\mathcal{Z}_0|^2+ \int_0^t \int_{U_0}\{ (\mathcal{Z}_s+v, v -\frac{\mathcal{Z}_s}{1+ d(\mathcal{Z}_s,B_j)}) +\frac{d(\mathcal{Z}_s,B_j)}{1+d(\mathcal{Z}_s,B_j)}(\mathcal{Z}_s,  v- \frac{\mathcal{Z}_s}{1+d(\mathcal{Z}_s,B_j)} )\}\\& \times \sin^2(\frac{\theta}{2})\s_j(\mathcal{Z}_s, v) f(s,\mathcal{X}_s, v) )dv Q(d\theta)d\phi ds  +\mathcal{M}_t^j
 \end{eqnarray*}
 where we used \eqref{PrameterTrans}  and \eqref{symmGamma} in the last equality.\\
Recalling the definition of $\sigma_j$ in \eqref{sigma-loc}, we get   the following inequality.
 \begin{eqnarray*}  &\mathbb{E}[|\mathcal{Z}_t|^2] - \mathbb{E}[ |\mathcal{Z}_0|^2]\\
 &\leq  c \mathbb{E}\left[\int_0^t \int_{U_0} |\frac{\mathcal{Z}_s}{1+d(z,B_j)} -v|%^\gamma
 \left( |v|^2 +2 d(z,B_j) (v, \frac{\mathcal{Z}_s}{1+ d(z,B_j)})\right) \sin^2(\frac{\theta}{2})Q(d\theta)d\phi f(s,\mathcal{X}_s,v)dvds \right]\end{eqnarray*}
\begin{eqnarray*}
&\leq 
   \hat{K}\mathbb{E}\left[ \int_0^t \int_{U_0} (|\frac{\mathcal{Z}_s}{1+d(z,B_j)}|%^\gamma 
+|v|)%^\gamma) 
\left( |v|^2 +2 d(z,B_j) |v| |\frac{\mathcal{Z}_s}{1+ d(z,B_j)}|\right)   \sin^2(\frac{\theta}{2}) Q(d\theta)d\phi f(s,\mathcal{X}_s,v)dvds \right]\\ &\leq 
\mathbb{E}[ |\mathcal{Z}_0|^2] +  \hat{K} \mathbb{E}\left[\int_0^t \int_{U_0} (|\frac{\mathcal{Z}_s}{1+d(z,B_j)}|%^\gamma 
+|v|%^\gamma
) |v|^2 \sin^2(\frac{\theta}{2}) Q(d\theta)d\phi f(s,\mathcal{X}_s,v)dvds\right] \\& + 2 \hat{K} \mathbb{E}\left[\int_0^t \int_{U_0} (|\frac{\mathcal{Z}_s}{1+d(z,B_j)}|%^\gamma
 +|v|%^\gamma
) |v| |\mathcal{Z}_s\sin^2(\frac{\theta}{2}) Q(d\theta)d\phi f(s,\mathcal{X}_s,v)dvds\right]\\
 &\leq 
 3 \hat{K} \mathbb{E}\left[\int_0^t \int_{U_0} (|\mathcal{Z}_s|^2+1 +|v|%^\gamma
) (|v|^2 + |v|) \sin^2(\frac{\theta}{2}) Q(d\theta)d\phi f(s,\mathcal{X}_s,v)dvds\right]
\\ &\leq  
  (3\hat{K}C_T + \hat{K} c_T) \int_0^T  (\sup_{[0,t]}\mathbb{E}[|\mathcal{Z}_{s}|^2]+1)dt\int_0^\pi\int_0^{2\pi}\sin^2(\frac{\theta}{2}) Q(d\theta)d\phi \\
 &\leq  K_T \int_0^T  (\sup_{[0,t]}\mathbb{E}[|\mathcal{Z}_{s}|^2]+1)dt+\hat{K} c_T \int_0^\pi\int_0^{2\pi} \sin^2(\frac{\theta}{2}) Q(d\theta)d\phi
 \end{eqnarray*}
 where 
%$\gamma=1$, 
 the constants  $C_T$ and  $c_t$ have been   introduced in condition {\bf B4} and resp. {\bf B6} and  $K_T := 3\hat{K}C_T + \hat{K} c_T$. we  have repeatedly used  the inequality $|x|^\epsilon\leq |x|^2 +1 \quad \forall x \in \mathbb{R}^d \quad \text{for} \quad \epsilon\in (0,2]$.%\eqref{inequality in Rd}. 
   Here $\hat{K}$ and $K_T$ are positive finite constants.\\

 {\it Proof of \eqref{boundStbyL2j}:}\\
  Let us  consider the stochastic integral

\begin{equation}\label{stochintmod}
\hat{I}_j(\mathcal{Z})_{t}:= \int_0^t  \int_{U_0\times \R^+_0} |\alpha_j(\mathcal{Z}_{s_-}, v, \theta, \phi)|1_{[0,\; \s_j(\mathcal{Z}_{s_-}, v)f(s, \mathcal{X}_s | v)]}(r)
 d{N}, 
\end{equation}

Using \eqref{growth},  {\bf{B4}}, and {\bf{B6}}
 it follows 
\begin{align}
&\mathbb{E}[\hat{I}_j(\mathcal{Z})_{T}] =  
\int_0^T \int_{U_0 \times \R^+_0} \mathbb{E}[  |\alpha_j(\mathcal{Z}_{s}, v, \theta, \phi)| \s_j(\mathcal{Z}_{s_{-}}, v)f(s, \mathcal{X}_s, v)]dv d\phi Q(d\theta)ds \notag
\\& \leq C 
  \left( \int_0^T \mathbb{E} [ |\frac{\mathcal{Z}_s}{1+d(z,B_j)}|^2%{1+\gamma}
\int_{\mathbb{R}^3} f(s,\mathcal{X}_s,v)dv] ds + \int_0^T \int_{\mathbb{R}^3} \mathbb{E}[|v|^2%{1+\gamma}
f(s, \mathcal{X}_s, v)]dvds\right) \notag
\\& \leq C 
  \left(\int_0^T  \sup_{s'\in [0,s]} \mathbb{E} [   |\mathcal{Z}_{s'}|^2%{1+\gamma}
] \int_{\mathbb{R}^3} f(s,\mathcal{X}_s,v)dv]  ds  + \int_0^T \int_{\mathbb{R}^3} \mathbb{E}[|v|^2%{1+\gamma}
f(s, \mathcal{X}_s, v)]dvds\right) \notag
 \\& \leq C_T C
    \left(\int_0^T \sup_{s\in [0,t]} \mathbb{E} [ |{Z}_s|^{2}] dt +  \int_0^T \sup_{x \in \mathbb{R}^3}\{\int_{\mathbb{R}^3}( |v|^2%{1+\gamma}
+1)f(s, x, v)dv\}ds \right)\notag\\&
   \leq C_T C
   \left(\int_0^T \sup_{s\in [0,t]} \mathbb{E} [ |{Z}_s|^{2}] dt +T \mathcal{C}_T \right) \label{ineqIsj}
\end{align} 
   
where we have chosen $ \mathcal{C}_T $ to be a big enough but finite constant. 
 % \\& \leq C_T C
 %   \left(\int_0^T \sup_{s\in [0,t]} \mathbb{E} [ |\mathcal{Z}_s|^{1+\gamma}] dt +  \int_0^T \sup_{x_s \in \mathbb{R}^3}\{\int_{\mathbb{R}^3} |v|^{1+\gamma}f(s, x_s, v)dv\}ds \right)\notag\\&
 %  \leq C_T C
  % \left(\int_0^T \sup_{s\in [0,t]} \mathbb{E} [ |\mathcal{Z}_s|^{1+\gamma}] dt +T \mathcal{C}_T \right) \label{ineqI}
%\end{align} 
 \eqref{boundStbyL2j} follows from \eqref{expsupSj} and \eqref{ineqIsj}.

\end{proof}
We are now in a position to prove Theorem \ref{Theorem MR}.\\

\noindent {\bf{ Proof of Theorem \ref{Theorem MR}}:}
\begin{proof}
Let us  first remark that the following equalities hold
\begin{equation}\label{alphaeqalphaj}
\alpha_j(z,v,\theta, \phi)=\alpha(z,v,\theta, \phi), \quad 
\sigma_j(z,v)=\sigma(z,v) \quad \forall z \in \mathbb{R}^3 :\, |z|\leq j
\end{equation}
%It is sufficient to prove that for all $T>0$ there exists a unique process  $ (Z,X)_{t\in [0,T]}$$\in S_T^1$ satisfying a.s. the following stochastic equation 
%$\begin {align} & Z_t = Z_0+ \notag \\&\int_0^t \int_{U}\alpha(Z_{s}, v_s, \xi)1_{[0,\; \s(|Z_{s} - v_s|)f(s,X_s| v_s)]}(r) {N}(dy,dv,d\xi, dr, ds) \label{eq-velr-withoutcom}\\& X_t = X_0+\int_0^t Z_s ds,  \label{eq-spacer-withoutcom}
 %   \end{align}
  %  \eqref{eq-velr-withoutcom}, \eqref{eq-spacer-withoutcom} is then equivalent to  \eqref{eq-velr}, \eqref{eq-spacer}.

%  We will follow the strategy of the proof of Theorem 4.3.1 in \cite{MR}.   
Let $(X_\cdot^{j}, Z_\cdot^{j})$$ \in S_T^1$ be the  solution of  \eqref{eq-velr-j-cor}, \eqref{eq-spacer-j-cor} with finite initial second moment. 
Let \[
\tau_{_{1}} = \inf\{t \in [0, T]: |Z_t^1| > 1 \}.\] 
We denote the solution $(X_\cdot^1, Z_\cdot^1)$ as $(X^{(1)}, Z^{(1)})$ in the time interval $[0, \tau_1)$. Its law is 
a probability measure $P_1$ on the canonical path space $(\Omega, \mathcal{F}_{\tau_{_1}})$. 
Next, for each fixed $\omega \in \O$, consider the equations \eqref{eq-spacer-j-cor} and \eqref{eq-velr-j-cor}, with initial condition given by $(X^1_{\tau_{_1}(\omega)}, Z^1_{\tau_{_1}(\omega)})$ with initial time being ${\tau_{_1}(\omega)}$. The above procedure 
can be used to obtain a solution $(X^{(2, \omega)}, Z^{(2, \omega)})$ until time $\tau_{_{2, \omega}}$ where 
\[
\tau_{_{2, \omega}} = \inf\{t \in [\tau_{_{1}}(\omega), T]: |Z^{(2, \omega)}(t)| > 2\}.\] 
The law of $(X^{(2, \omega)}, Z^{(2, \omega)})$ until $\tau_{_{2, \omega}}$ 
is denoted by $Q_{\omega}$, a probability measure for each $\omega \in \O$. 
It gives us a regular conditional probability distribution (rcpd)  given $\mathcal{F}_{\tau_{_1}}$.  Hence, by a theorem of Stroock and Varadhan (Lemma 3.6 in \cite{SV1}, cf. Ch. 12 \cite{SV}, and  \cite{St}) , we patch the measures $P_1$ and $Q$ so that there 
exists a unique probability measure $P_2$ on $(\Omega, \mathcal{F}_{\tau_{_2}})$ such that 
$P_2 = P_1$  on $\mathcal{F}_{\tau_{_1}}$, and the rcpd of $P_2$ given 
$\mathcal{F}_{\tau_{_1}}$ is $Q$. 

By  \eqref{alphaeqalphaj} and the definitions of $P_1$ and $Q$, we obtain that 
$P_2$ solves the martingale problem posed by  \eqref{eq-spacer} and \eqref{eq-velr} up to time $\tau_{_2}$. 
The procedure is repeated to obtain a solution on $[0, \tau)$ where $\tau = \lim_{n \to \infty} \tau_n$. 
%Let $P^j_T$ be its distribution on  $(\mathbb{D}[0,T]\times \mathbb{D}[0,T],  \mathcal{D}_T \otimes \mathcal{D}_T)$. Then $P^j_T$ solves the martingale problem  for $(\mathcal{L}(X_0,Z_0), \mathcal{L}^j_f)$.
%with 
%\begin{equation}\label{generator j}
%\mathcal{ L}^j_f\psi(x,z) = (z,\nabla_x\psi(x,z))+L^j_f\psi(x,z) \quad \forall\, \psi\in C_0^2(\R^3\times\R^3)
%\end{equation}
%where  
%\begin{equation}
%L^j_f\psi(x,z)= \int_{\mathbb{R}^3\times (0,\pi]\times [0,2 \pi)} \{\psi(x,z+\alpha_j(z,v,\xi))- \psi(x,z)\}f(t, x, v)B_j(z, dv, d\theta)d\phi,
%\end{equation}
%with
%\begin{equation}
%B_j(z, dv, d\theta):=\sigma_j (|v-z|) dv Q(d\theta)
%\end{equation}

 %Let {\color{red} first tightness or uniquess of $P_j$ should be proven to proceed like below:}
%\begin{equation*}
%\tau_j:=\inf\{t\in [0,T]:|Z_t^{j}|>j\}
%\end{equation*}
%By uniqueness of the solution of \eqref{eq-velr-j}, \eqref{eq-spacer-j} it follows that 
%\begin{equation*}
%Z_t^{j}=Z_t^{j+1}\quad \text{a.s.}, \,\,\,\text{for}\,\,\, t\in [0,\tau_j]\,,
%\end{equation*}
%giving $\mathrm{P}(\tau_j\leq \tau_{j+1})=1$ $\forall j\in \mathbb{N}$ .

 We will use the statement in Theorem \ref{Th moment control} to prove that
\begin{equation} \label{fulltime}
\mathrm{P}( \cup_{j\in \mathbb{N}}\{\tau_j=T\})=1
\end{equation} 
so that $\tau = T$ a.s.
Since  $\mathrm{P}(\tau_j\leq \tau_{j+1})=1$ $\forall j\in \mathbb{N}$,  it would then follow that $
\displaystyle{\lim_{n \to \infty}  \tau_n = T} $ a.s. Hence a solution of \eqref{eq-spacer} and \eqref{eq-velr} exists for $t \in [0, T]$ and the proof would be over.\\
%It then follows that the a.s. limit process $(Z,X)_{t\in [0,T]}$$=\lim_{j\to \infty}$$(Z^{j},X^{j})_{t\in [0,T]}$  
%is the solution of    \eqref{eq-velr}, \eqref{eq-spacer}.

%It follows from \eqref{expsupSj} and  \eqref{ineqstochintmodj} that 
%\begin{equation}\label{expsupZj}
          %  \mathbb{E}[ \sup_{t\in [0,T]} |Z^{j}_t| ]\leq K \int_0^T \mathbb{E} [\sup_{s\in [0,t]} |Z^{j}_s|] dt +M_T  +\mathbb{E}[|Z_0|] 
            % \end{equation}
            % so that by Gronwall's Lemma 
            % \begin{equation}\label{GrownwallZj}
                  %       \mathbb{E}[ \sup_{t\in [0,T]} |Z^{j}_t| ]\leq \exp{KT} (M_T  +\mathbb{E}[|Z_0|]) 
                      %    \end{equation}
     %It follows 

{\bf Proof of \eqref{fulltime}:}
     \begin{align*} \mathrm{P}(\tau_j<T)&=\mathrm{P}(\sup_{t\in [0,T]} |Z^{j}_t|>j)\\& 
     \leq \frac{\mathbb{E}[ \sup_{t\in [0,T]} |Z^{j}_t| ]}{j}=\frac{\|\mathcal{Z}_\cdot\|_{S^{1}_T}} {j}\\&
     % \leq \frac{k_T}{j}\left( \int_0^T \sup_{t \in [0,T]}\mathbb{E}%[|\mathcal{Z}_t|^2]dt +m_T + \mathbb{E}[|\mathcal{Z}_0|] ]\right)
     \leq 
    \frac{ T k_T (M_T  + \mathbb{E}[|\mathcal{Z}_0|^2]) e^{T K_T}  + m_T + \mathbb{E}[|\mathcal{Z}_0|]}{j}
     \end{align*}
     by the estimate \eqref{growthinST1}. It follows  that
     \begin{equation} \label{fulltimecomplement}
     \mathrm{P}( \cap_{j\in \mathbb{N}}\{\tau_j<T\})=\lim_{j\to\infty} \mathrm{P}(\{\tau_j< T)\}=0.
     \end{equation}   
Hence, \eqref{fulltime} is proven.\\
The finiteness of second moments expressed in equation \eqref{secondmomentmeasure}  in the statement of Theorem \ref{Theorem MR} is a consequence of the estimate \eqref{GrownwallZj} in Theorem \ref{Th moment control}  
\end{proof}

\subsection{Construction of  Boltzmann processes with densities satisfying \eqref{eqn1.1}} \label{Par Construction BP}

In this section we assume that  $\{f(t,x,z)\}_{t\in [0,T]}$ is  a  collection of densities which solves the Boltzmann equation 
\eqref{eqn1.1} and   satisfies 
hypothesis {\bf {B0}}- {\bf {B6}}. 
%Moreover we assume condition {\bf C1.} stated in Section \ref{Par bilinear BE}.

% {\bf C1.} The densities $f(t, x, z)$ and $g(t, x, z)$ are in $C^{1,2}([0, T] \times \mathbb{R}^6)$ and are strictly positive-valued functions with 
%$ g\log g, g\log f \in L^1(\mathbb{R}^6)$ for each $t \in [0, T]$ and  $\lim_{|x|\to \infty} g(t,x,z)=0$ 

\begin{theorem} \label{thm-existence-given-f} 
 Let    $(X_t,Z_t)_{t\in [0,T]}$ be  a stochastic process solving  (\eqref{eq-spacer}, \eqref{eq-velr}).  
 Suppose that  $(X_t,Z_t)_{t\in [0,T]}$ has  time marginals with  density $g(t,x,z)$, for each $t\in [0,T]$ which satisfy  hypothesis {\bf {B0}} - {\bf{B3}} for $\gamma=1$. Suppose   $\{f(t,x,z)\}_{t\in [0,T]}$ and  $\{g(t,x,z)\}_{t\in [0,T]}$ satisfy condition 
 {\bf C1} stated in Section \ref{Par bilinear BE}.  Let  $g(0,x,z)$ $=$$ f(0,x,z)$ a.s..    
Then $g(t,x,z)=$ $f(t,x,z)$ $\quad a.s.$ for all $t\in [0,T]$.  
 \end{theorem}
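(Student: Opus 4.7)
The plan is to exploit the relative-entropy uniqueness result already proven in Theorem~\ref{thm-uniqueness IWlinBE-given-f}, by showing that the time-marginal density $g$ of the process $(X_t,Z_t)_{t\in[0,T]}$ is itself a strong solution of the bilinear Boltzmann equation associated to $f$. Since $f$ is, trivially, a strong solution of the bilinear equation associated to itself (the bilinear and standard forms coincide when the second argument equals $f$), and the initial data match by assumption, the uniqueness theorem will then force $g(t,x,z)=f(t,x,z)$ a.e.

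First I would apply Theorem~\ref{ThSDEbilinearBE} to the process $(X_t,Z_t)_{t\in[0,T]}$, whose law at time $t$ is the probability measure $\mu_t(dx,dz)=g(t,x,z)\,dx\,dz$. The second-moment bound $\sup_{t\in[0,T]}\int|z|^2\mu_t(dx,dz)<\infty$ required in Theorem~\ref{ThSDEbilinearBE} is delivered by condition {\bf B2} (with $\gamma=1$) applied to $g$, since $\int_{\R^6}|z|^2 g(t,x,z)\,dx\,dz = \int_{\R^3}\big(\int_{\R^3}|z|^2 g(t,x,z)\,dz\big)dx$ can be controlled, together with the mass-preservation identity $\int g(t,x,z)\,dx\,dz=1$, by the regularity hypotheses assumed on $g$ and by the initial finiteness of $\mathbb{E}[|Z_0|^2]$. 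The conclusion of Theorem~\ref{ThSDEbilinearBE} then gives that $\{\mu_t\}_{t\in[0,T]}$ is a weak solution of the bilinear Boltzmann equation \eqref{weakbilinearBEmeasures} associated to $f$, that is, for all $\psi\in C^2_0(\R^6)$,
\begin{align*}
\int_{\R^6}\psi(x,z)\,g(t,x,z)\,dx\,dz
&= \int_{\R^6}\psi(x,z)\,g(0,x,z)\,dx\,dz \\
&\quad + \int_0^t\!\!\int_{\R^6} g(s,x,z)\,(z,\nabla_x\psi(x,z))\,dx\,dz\,ds
+ \int_0^t Q_s(f,g)(\psi)\,ds.
\end{align*}

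Next I would upgrade this weak identity to the strong form \eqref{eqn1.1lin}. Hypothesis {\bf C1} gives $g\in C^{1,2}([0,T]\times \R^6)$, so $\partial_t g$ exists classically, and conditions {\bf B0}--{\bf B3} applied to $g$ (with $\gamma=1$) together with estimate \eqref{growth} ensure that $Q(f,g)(s,x,z)$ is well-defined and locally integrable in $(x,z)$. Differentiating the weak identity in $t$, integrating by parts the transport term (legitimate because of the decay $\lim_{|x|\to\infty}g(t,x,z)=0$ from {\bf C1} and the compact support of $\psi$), and using that $\int g(s,x,z) L_{f(s)}\psi(x,z)\,dx\,dz = \int \psi(x,z) Q(f,g)(s,x,z)\,dx\,dz$ (via Proposition~\ref{PropAppTanaka} and Remark~\ref{RemExtensionPropTanaka}), one obtains
\begin{equation*}
\int_{\R^6}\psi(x,z)\Big(\partial_t g(t,x,z) + z\cdot\nabla_x g(t,x,z) - Q(f,g)(t,x,z)\Big)dx\,dz = 0
\end{equation*}
for every $\psi\in C^2_0(\R^6)$. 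Since the integrand in parentheses is continuous by hypothesis {\bf C1} and {\bf B0}, the fundamental lemma of calculus of variations yields $\partial_t g + z\cdot\nabla_x g = Q(f,g)$ pointwise, so $g$ is a strong solution of the bilinear Boltzmann equation related to $f$.

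With both $\{f(t,x,z)\}$ being a strong solution of \eqref{eqn1.1} (by the global hypothesis of Section~\ref{Par Construction BP}) and $\{g(t,x,z)\}$ being a strong solution of the bilinear Boltzmann equation \eqref{eqn1.1lin} related to $f$, and with {\bf C1} and $g(0,\cdot,\cdot)=f(0,\cdot,\cdot)$ a.e. in hand, Theorem~\ref{thm-uniqueness IWlinBE-given-f} applies directly and gives $g(t,x,z)=f(t,x,z)$ a.e. for every $t\in[0,T]$. The main obstacle I would expect is the rigorous justification of the integration by parts and the weak-to-strong upgrade: one must ensure that the boundary terms vanish and that all quantities $Q(f,g)$, $z\cdot\nabla_x g$, and $\partial_t g$ are integrable enough to legitimize applying the fundamental lemma pointwise in $t$. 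The growth assumption {\bf A2$^{\prime}$} ($\gamma=1$), together with hypotheses {\bf B2}, {\bf B3}, and {\bf B6} on both $f$ and $g$ and the strict positivity in {\bf C1}, appears to be exactly what is needed to handle the quadratic growth of the collision kernel in the velocity variable.
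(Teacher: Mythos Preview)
Your route is valid and reaches the same destination, but it differs from the paper's. The paper does \emph{not} first establish that $g$ is a strong solution of the bilinear equation and then invoke Theorem~\ref{thm-uniqueness IWlinBE-given-f} as a black box. Instead it works directly at the level of the stochastic process: it applies the It\^o formula to $\ln g(t,X_t,Z_t)$ and to $\ln f(t,X_t,Z_t)$ along the solution of (\ref{eq-spacer},\,\ref{eq-velr}), takes expectations, and thereby re-derives the two key identities \eqref{lng} and \eqref{InfIto} from the proof of Theorem~\ref{thm-uniqueness IWlinBE-given-f}. Once those identities are in hand, the relative-entropy computation \eqref{relentropy2}--\eqref{relentropyineq} runs verbatim.

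The trade-off is this. Your approach is cleaner conceptually---``$g$ solves the bilinear equation, uniqueness gives $g=f$''---and re-uses Theorem~\ref{thm-uniqueness IWlinBE-given-f} wholesale. But it carries the cost of the weak-to-strong upgrade, which you correctly flag as the sticking point: you must show that $L_{f(t)}^{*}g$ (obtained from $\int g\,L_{f(t)}\psi$ via the Tanaka change of variables) is continuous in $(x,z)$ so that the fundamental lemma applies pointwise, and this needs some care given that the theorem only assumes {\bf B0}--{\bf B3} on $g$ (not {\bf B6}). The paper's It\^o-formula approach sidesteps this entirely: it never needs $g$ to satisfy the bilinear PDE in strong form, because the entropy identities are produced directly by the stochastic calculus on $(X_t,Z_t)$, with the expectation supplying the integration against $g$ automatically. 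That is what the It\^o route buys.
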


 \begin{proof}  The statement in Theorem \ref{thm-existence-given-f}  is proven, once we show  the following equality 
  for  the relative entropy 
 \begin{equation} \label{relative entropy eq}
 R_t(g|f):= \int_{\mathbb{R}^6} \ln{\left(\frac{g(t.x.z)}{f(t,x,z)}\right)} g(t,x,z) dxdz = 0 \quad \forall t\in [0,T]
 \end{equation}
\\ {\it Proof of equality \eqref{relative entropy eq}:}\\
We first apply the It\^o formula (Theorem 5.1 in Chapter II, Section 5  \cite{IW}) to $\ln({g(t, X_t, Z_t)})$ , where   $(X_t,Z_t)_{t\in [0,T]}$  solves  (\eqref{eq-spacer}, \eqref{eq-velr}) and take its expectation.  By similar arguments as in the proof of Theorem \ref{Ito} we obtain

 \begin{align}\label{lngIto}
  & \int_{\mathbb{R}^6} \ln{(g(t,x,z))} g(t,x,z) dxdz -
  \int_{\mathbb{R}^6} \ln{(g(0,x,z))}  g(0,x,z) dxdz \notag \\& 
  = \int_0^t  \int_{\mathbb{R}^6 } \left(\frac{\partial}{ \partial s} \ln{(g(s,x,z)}\right) g(s,x,z) dx dz ds \notag \\& +\int_0^t  
 \int_{\mathbb{R}^6 }  (z \cdot \nabla_x \ln{(g(s,x,z))} g(s,z,x) dxdzds\notag\\&
 +  \int_0^t  \int_{\mathbb{R}^9 \times \Xi} \{  \ln{(g(s,x,z^\star)}- \ln{(g(s,x,z)}\}\notag \\&\times \sigma(|z-v|) f(s,x,v)g(s,x,z)Q(d\theta)d\phi dv  dxdzds 
 \end{align}
 By the same arguments as in the Proof of Theorem \ref{thm-uniqueness IWlinBE-given-f}, the first and second  term in the right side of \eqref{lngIto} vanish.
% In fact 
%  \begin{align}
 % &\int_0^t  \int_{\mathbb{R}^6 }\frac{\partial}{ \partial s} \ln{(g(s,x,z))} g(s,x,z) dx dz ds =  \int_0^t  \int_{\mathbb{R}^6 }\frac{\partial}{ \partial s} g(s,x,z) dx dz ds  \notag \\&=  \int_{\mathbb{R}^6 }(g(t,x,z) -g(0,x,z)) dxdz =0\notag 
  %\end{align}
  %and \begin{align}
 % &  \int_{\mathbb{R}^6 }  z \cdot \nabla_x \ln{(g(s,x,z))} g(s,z,x) dxdzds\notag \\& =\int_{\mathbb{R}^6 }  z \cdot \nabla_x  g(s,z,x) dxdzds =0 \notag
 % \end{align}
 % where the last equality is obtained by integrating and using condition  $\lim_{|x|\to \infty} g(t,x,z)=0$.\\
 \eqref{lngIto} simplifies then to 
 \begin{align}\label{lngIto2}
  & \int_{\mathbb{R}^6} \ln{(g(t,x,z))} g(t,x,z) dxdz -
  \int_{\mathbb{R}^6} \ln{(g(0,x,z))}  g(0,x,z) dxdz \notag \\&
 = \int_0^t  \int_{\mathbb{R}^6 \times \mathbb{R}^3 \times \Xi} \{  \ln{(g(s,x,z^\star))}- \ln{(g(s,x,z))}\}\notag \\&\times \sigma(|z-v|) f(s,x,v)g(s,x,z)Q(d\theta)d\phi dv  dxdzds 
 \end{align}
  
 We now apply the It\^o formula to $\ln({f(t, X_t, Z_t)})$, where   $(X_t,Z_t)_{t\in [0,T]}$  solves ( \eqref{eq-spacer}, \eqref{eq-velr}).  Similar to \eqref{eqIto} in the proof of Theorem \ref{Ito} we obtain 
  \begin{align}
  &\ln{(f(t,X_t, Z_t)} -\ln{(f(0,X_t, Z_t)}\notag \\&=\int_0^t \frac{1}{f(s,X_s, Z_s)} \frac{\partial}{ \partial s} f(s,X_s,Z_s) ds +\int_0^t Z_s \cdot \frac{\nabla_x {(f(s,X_s,Z_s))}}{f(s,X_s,Z_s)} ds \notag \\& +  \int_0^{t} \int_{ U_0} \{\ln (f(s,X_s, Z_s + \alpha(Z_s,v,\theta, \phi))) -\ln(f(s,X_s, Z_s))  \} \notag\\& \times \s(|Z_{s} - v|)f(s,X_s,v) dv Q(d\theta) d\phi ds  \notag \\
 & + M^{t}_0(\ln) \notag 
 \end{align} where $ M^{t}_0(\ln)$ is a martingale. By taking the expectation and considering  that  $\{f(t,x,z)\}_{t\in [0,T]}$ is  a  collection of densities which solves the Boltzmann equation 
\eqref{eqn1.1} we obtain 
\begin{align}\label{lnfIto2}
  & \int_{\mathbb{R}^6} \ln{(f(t,x,z))} g(t,x,z) dxdz -
  \int_{\mathbb{R}^6} \ln{(f(0,x,z))}  g(0,x,z) dxdz \notag \\&
 = \int_0^t  \int_{\mathbb{R}^6 } \frac{Q(f, f) (s,x,z)} {f(s,x,z)} g(s,x,z) dxdzds \notag \\& + \int_0^t  \int_{\mathbb{R}^6 \times \mathbb{R}^3 \times \Xi} \{  \ln{(f(s,x,z^\star))}- \ln{(f(s,x,z))}\}\notag \\&\times \sigma(|z-v|) f(s,x,v)g(s,x,z)Q(d\theta)d\phi dv  dxdzds 
 \end{align}

The rest of the proof is identical to the proof of Theorem \ref{thm-uniqueness IWlinBE-given-f} , since \eqref{lngIto2} is identical to  \eqref{lng} and \eqref{lnfIto2} is identical to \eqref{InfIto}. 

 \end {proof}
 In Theorem \ref{Theorem MR} we proved the existence of a weak solution to the SDE (\eqref{eq-spacer}, \eqref{eq-velr}), given  $\{f(t,x,z)\}_{t\in [0,T]}$.
  From Theorem \ref{thm-existence-given-f}, it follows under the assumption that $\{f(t,x,u)\}_{t\in [0,T]}$ solves the  Boltzmann equation \eqref{eqn1.1}, that  the stochastic process  $(X_t,Z_t)_{t\in [0,T]}$ in Theorem   \ref{Theorem MR}  
  solves the McKean -Vlasov equation associated 
 to the Boltzmann equation \eqref{eqn1.1} and is a Boltzmann process  with densities $\{f(t,x,z\}_{t \in [0,T]}$ up to time  $T$. This is stated in the final result  below:
\begin{theorem}\label{mainThexistenceBoltzmann} 
%  {\color{red} Let  Hypotheses {\bf {A}}  be satisfied and  $\gamma=1$.}
  Assume that $\{f(t,x,u)\}_{t\in [0,T]}$ is  a  collection of densities which solves the  Boltzmann equation \eqref{eqn1.1}, 
 and satisfies the hypotheses ${\bf {B}_0} - {\bf{B}_6}$.
 Suppose that  the weak  solution $(X_t, Z_t)_{t\in [0,T]}$  of the stochastic system (\eqref{eq-spacer}, \eqref{eq-velr}) has its distribution that admits a probability density at each time $t \in [0, T]$ given by $g(t,x,u)$. If condition {\bf{C1}} is satisfied by $\{f(t,x,u)\}_{t\in [0,T]}$ and    $\{g(t,x,u)\}_{t\in [0,T]}$ and  $(X_0, Z_0)$ has  finite second moment, then  (\eqref{eq-spacer} , \eqref{eq-velr})  is a stochastic  McKean-Vlasov equation  and coincides with 
 (\eqref{eq-spaceB},\eqref{eq-velB}).  Its   solution 
$(X_t, Z_t)_{t\in [0,T]}$  has  values in $\mathbb{D}\times \mathbb{D}$ and  
  its Fokker-Planck  equation is the Boltzmann equation \eqref{eqn1.1}. Hence  $(X_\cdot, Z_\cdot)$ is a Boltzmann process.
  \end{theorem}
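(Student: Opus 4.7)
The plan is to assemble the final result by combining three pieces already established in the section: existence of a weak solution to the SDE (\eqref{eq-spacer}, \eqref{eq-velr}), the entropy-based identification $g=f$, and the characterization via Theorem \ref{Ito} of solutions of the McKean--Vlasov SDE as Boltzmann processes.

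First I would observe that the hypotheses on $f$ include in particular ${\bf B_4}$--${\bf B_6}$, and that $f$ is continuous since it is a $C^{1,2}$ solution of \eqref{eqn1.1} under ${\bf B_0}$--${\bf B_3}$. Invoking Theorem \ref{ThCor MR}, which packages Theorem \ref{Theorem MR} together with Corollary \ref{Corollary MR} and Lemma \ref{LemmaboundnormS1}, I obtain a weak solution $((\Omega,\mathscr{F},(\mathscr{F}_t)_{t\in[0,T]},\mathrm{P}),N,(X_t,Z_t)_{t\in[0,T]})$ of (\eqref{eq-spacer}, \eqref{eq-velr}) with c\`adl\`ag paths in $\mathbb{D}\times\mathbb{D}$ and satisfying the second-moment bound \eqref{finitesecondmomentsol}. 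In that construction the driving noise is an $\mathscr{F}_t$-adapted Poisson random measure with compensator $m(s,v)\,dv\,Q(d\theta)\,d\phi\,dr\,ds$, where $m(s,v)=\int_{\mathbb{R}^3}f(s,x,v)\,dx$, and $(X_0,Z_0)$ is taken with finite second moment and (as is standard for matching initial data) with law having density $f(0,x,z)$, so that $g(0,\cdot,\cdot)=f(0,\cdot,\cdot)$.

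Next I would invoke Theorem \ref{thm-existence-given-f}: by assumption the time marginals of $(X_t,Z_t)$ admit a density $g(t,x,z)$, the pair $(f,g)$ satisfies condition ${\bf C1}$ together with the regularity ${\bf B_0}$--${\bf B_3}$ (for $\gamma=1$), and the initial values agree. The conclusion is the pointwise (a.e.) identification
\[
g(t,x,z)=f(t,x,z),\qquad \forall\,t\in[0,T].
\]
This is the only substantive step of the argument; the relative-entropy manipulations behind it are already carried out in Theorem \ref{thm-existence-given-f}, so here it is used as a black box. Once $g=f$ is secured, the remaining identification is essentially tautological: the coefficient $f(s,X_s\,|\,v)$ appearing in the indicator of (\eqref{eq-velr}) coincides with the conditional density of the solution's own position marginal given its velocity marginal, and the compensator $m(s,v)\,dv\,Q(d\theta)\,d\phi\,dr\,ds$ likewise is determined by the law of $(X_s,Z_s)$. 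This is precisely the McKean--Vlasov structure of (\eqref{eq-spaceB}, \eqref{eq-velB}), so the two SDEs coincide.

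Having established that $(X_t,Z_t)_{t\in[0,T]}$ solves the McKean--Vlasov SDE (\eqref{eq-spaceB}, \eqref{eq-velB}) with time marginal densities $\{f(t,x,z)\}_{t\in[0,T]}$, I would finish by applying Theorem \ref{Ito}: it ensures that $\{f(t,x,z)\}$ is a weak solution of the Boltzmann equation (which we already assume) and that the Fokker--Planck equation associated with the McKean--Vlasov SDE is the Boltzmann equation \eqref{eqn1.1}; by Definition \ref{Denskog} the process $(X_\cdot,Z_\cdot)$ is thus a Boltzmann process. The main obstacle is not in the synthesis itself but in verifying that the invoked theorems apply: one must check in particular that the assumed regularity of $g$ suffices for Theorem \ref{thm-existence-given-f} (this is precisely what ${\bf B_0}$--${\bf B_3}$ and ${\bf C1}$ guarantee) and that the initial matching $g(0,\cdot,\cdot)=f(0,\cdot,\cdot)$ holds by the choice of $(X_0,Z_0)$; given those checks, the statement follows by direct citation of Theorems \ref{ThCor MR}, \ref{thm-existence-given-f} and \ref{Ito}.
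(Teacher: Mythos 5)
Your proposal is correct and follows essentially the same route as the paper's own (one-line) proof, which simply cites Theorem \ref{Theorem MR} and Theorem \ref{thm-existence-given-f}; you add the useful (and necessary) observation that the initial law must be chosen so that $g(0,\cdot,\cdot)=f(0,\cdot,\cdot)$, and you invoke Theorem \ref{Ito} explicitly to identify the limit process as a Boltzmann process, which the paper leaves implicit.
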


 \begin{proof}
  The result follows from Theorem \ref{Theorem MR} and Theorem \ref{thm-existence-given-f}.
 \end{proof}

\medskip {\bf Data availability statement: } This manuscript has no associated data.


\begin{thebibliography}{9}
\bibitem{ARS} Albeverio, S., R\"udiger  B., Sundar P.: The Enskog Process, {\it J. Stat. Phys.}  167(1), 90-122 (2017).
\bibitem{ARS2} Albeverio, S., R\"udiger  B., Sundar P.:  On the construction and identifcation of Boltzmann processes,  {\it Ensaios Mathemáticos}, 38, 1-22, Papers in honor of Errico Presutti, Sociedade Brasileira de Mathematica (2023).
\bibitem{Bi} Billingsley, P.: {\it Convergence of Probability Measures}, 2nd Ed, John Wiley, New York (1999).
\bibitem{Bo} Boltzmann, L.: {\it Vorlesungen \"uber Gastheorie} (1896) J. A. Barth, Leipzig, Part I; Part II. (1898) transl. by S. B. Brush, Lectures on Gas Theory.  Univ. Calif. Press, Berkeley (1964).
\bibitem {Bressan} Bressan, A. : Notes on the Boltzmann equation. Lecture Notes for a Summer  Course given at S.I.S.S. A. (2005).
\bibitem{Ce} Cercignani, C.: {\it Theory and application of the  Boltzmann Equation and its Applications}. Scottish Academic Press Edinburgh and London (1975).
\bibitem{Ce4} Cercignani C.: {\it The Boltzmann Equation and its Applications} Springer Verlag, New York  (1988).
\bibitem{CIP}  Cercignani, C., Illner R., Pulvirenti M.: {\it The Mathematical Theory of Dilute Gases}  Applied Mathematical Sciences Vol. 106, Springer Verlag (1994).
\bibitem {ConstantiniMarra1992} Costantini, C., Marra, R.: Hydrodynamic limits for the {B}oltzmann process,
  {\it J. Stat. Phys.}  (1-2), 67,  229--249 (1992).
\bibitem{DE} Dupuis, P., Ellis R.S.: {\it A weak Convergence Approach to the Theory of Large Deviations}  Wiley Series in Probability and Statistics, John Wiley and Sons , Inc.  (1997).
\bibitem{DL} Di Perna, R., Lions, P. L.: On the Cauchy problem for Boltzmann equations, {\it Ann. of Math.} 130, 321-366 (1989).
 \bibitem{F} Fournier N.: Finiteness of entropy for the homogenous Boltzmann equation with measure initial condition, {\it The Annals of Applied Probability}  Vol. 25. No 2. 860 -897 (2015). 
\bibitem{FM01} Fournier N.,  M\'el\'eard, S.: A stochastic particle numerical method for 3D Boltzmann equations without cutoff. 
{\it  Math. Comp.} 71, no. 238, 583–604  (2002).
\bibitem{FM} Fournier N., Mouhot C.: On the Well -Posedness of the Spatially Homogenous Boltzmann Equation with a moderate Angular Singularity.
{\it Commun. Math. Phys.} 289, 803 -824 (2009).
    \bibitem{FRS1}  Friesen, M., R\"udiger, B., Sundar, P.: The Enskog process for hard and soft potentials; {\it Nonlinear Differential Equations and Applications}, 26, Art. no. 20, (2019).
\bibitem{FRS2}  Friesen, M., R\"udiger, B., Sundar, P.: On uniqueness and stability for the Boltzmann-Enskog equation; {\it Nonlinear Differential Equations and Applications}, 29, Art. no: 25 (2022).
\bibitem{F1} Funaki, T.: A class of diffusion processes associated with nonlinear parabolic equations. {\it Z. Wahr. verw. Geb.} 67, 331-348(1984).
\bibitem{F2} Funaki, T.: The diffusion approximation of the spatially homogeneous Boltzmann equation. {\it Duke Math. J.} 52, 1-23 (1985).
\bibitem{HK90}
Joseph Horowitz and Rajeeva~L. Karandikar:
Martingale problems associated with the {B}oltzmann equation.
\newblock In { Seminar on {S}tochastic {P}rocesses, 1989 ({S}an {D}iego,
  {CA}, 1989)}, volume~18 of { Progr. Probab.}, pages 75--122. Birkh\"auser
  Boston, Boston, MA (1990).
\bibitem{IW} Ikeda, N., Watanabe, S.: {\it Stochastic Differential Equations and Diffusion Processes} (second edition), North-Holland Publishing Co., Amsterdam, Oxford, New York (1989). 
\bibitem{Jbook}  Jacod, J.: {\it Calcul Stochastique et Problèmes de Martingales} Lecture Notes in Mathematics 714. Berlin-Heidelberg-New York, Springer-Verlag (1979).
\bibitem{KS} Karatzas I., Shreve S.E.: {\it Brownian motion and stochastic calculus} (second edition).  Graduate Texts in Mathematics {113}. Springer Verlag, Berlin,  New York (1991)
\bibitem{LM12} Lu, X., , Mouhot C.: On measure solutions of the Boltzmann equation, part I: moment production and stability estimates. {\it J. Diff. Equations} 252 (4), 3305 -3363 (2012)
\bibitem{MR} Mandrekar V. ,  R\"udiger B.:   {\it Stochastic Integration in Banach spaces, Theory and Applications} Probability Theory and Stochastic Modelling, Springer, Berlin (2015).
\bibitem{MRT}  Mandrekar V. ,  R\"udiger B., S. Tappe, It\^{o}'s formula for Banach-space-valued jump processes driven by Poisson random measures, {\it Seminar on Stochastic Analysis, Random Fields and
Applications {VII}}, Progr. Probab. 67, 171--186, Birkh\"{a}user/Springer, Basel ({2013}).
\bibitem{Mo} Morgenstern, D.: Analytical studies related to the Maxwell-Boltzmann equation. {\it J. Rat. Mech. Anal.} 4, 533 - 555 (1955).
\bibitem{Pov} Povzner, A. Ya: On the Boltzmann equation in the kinetic theory of gases. (Russian)  {\it Mat. Sborn.} 58 (100), 65 - 86  (1962).
\bibitem{Re} Rezakhanlou, F.: A stochastic model associated with the Enskog equation and its kinetic limits. {\it Comm. Math. Phys.} 232, 327-375  (1997).
\bibitem{Ru} R{\"u}diger, B.: Stochastic integration  with respect to compensated Poisson random measures on separable Banach spaces. {\it Stoch. Stoch. Rep.}  76 (3) , 213 - 242 (2004).
\bibitem{RZ} R{\"u}diger, B., Ziglio, G.:  It\^o formula for stochastic integrals w.r.t. compensated Poisson random measures on separable 
Banach spaces. {\it Stochastics} 78 (6), 377--410  (2006).
\bibitem{Sk} Skorohod, A. V.: {\it Stochastic Equations for Complex Systems} Reidel Publication Co., Dordrecht  (1988).
\bibitem{St} Stroock, D. W.: Diffusion processes associated with L\'evy generators, {\it Z. Wahr. verw. Geb.} 32, 209-244 (1975).
\bibitem{SV1} Stroock, D.W., Varadhan, S. R. S.: Diffusion processes with continuous coefficients, I and II, {\it Comm. Pure and Appl. Math.}, {\bf 22}, 345-400; 479-530 (1969).
\bibitem{SV} Stroock, D.W., Varadhan, S. R. S.: {\it {Multidimensional Diffusion Processes}} Springer, Berlin HeidelbergNew York (1979).
\bibitem{Ta} Tanaka, H.: Probabilistic treatment of the Boltzmann equation of Maxwellian molecules. {\it Z. Wahr. verw. Geb.} 46, 67-105  (1978).
%\bibitem{Ta1} Tanaka, H.: On the uniqueness of Markov processes associated with the Boltzmann equation of Maxwellian molecules. 
% Proc. Symp. SDEs, RIMS, Kyoto, 409-429, Wiley, New York  (1978). 
\bibitem{Ta2} Tanaka, H.: Stochastic differential equations corresponding to the spatially homogeneous Boltzmann equation of Maxwellian and non-cutoff type. {\it J. Fac. Sci. Univ Tokyo}, Sect. A, Math. 34, 351-369  (1987).
\bibitem{Ta3}
 Tanaka, H.: On Markov process corresponding to Boltzmann's equation of Maxwellian gas. Proceedings of the Second Japan-USSR Symposium on Probability Theory (Kyoto, 1972),    Lecture Notes in Math., Springer, Berlin,  330, 478 - 489 (1973).
  \bibitem{V} Villani, C.: Entropy production and convergence to equilibrium for the Boltzmann equation. XIVth International Congress on Mathematical Physics, 130 -  144, World Sci. Publ., Hackensack, NJ, (2005).
\bibitem{Vil}  Villani, C.: {\it A review of mathematical topics in collision kinetic theory. Handbook of mathematical fluid dynamics}, Vol. I.  pages 71 -305. North -Holland, Amsterdam (2002).
\end{thebibliography}
 \end{document}